\theoremstyle{plain}
\newtheorem{theorem}{Theorem}[section]
\newtheorem{proposition}[theorem]{Proposition}
\newtheorem{corollary}[theorem]{Corollary}
\newtheorem{lemma}[theorem]{Lemma}
\newtheorem{problem}[theorem]{Problem}
\theoremstyle{definition}
\newtheorem{definition}[theorem]{Definition}
\newtheorem{condition}[theorem]{Condition}
\theoremstyle{remark}
\newtheorem{remark}[theorem]{Remark}
\newtheorem{example}[theorem]{Example}
\newcommand{\bA}{\mathbb{A}}
\newcommand{\bN}{\mathbb{N}}
\newcommand{\bZ}{\mathbb{Z}}
\newcommand{\ba}{\pmb{a}}
\newcommand{\bb}{\pmb{b}}
\newcommand{\be}{\pmb{e}}
\newcommand{\bq}{\pmb{q}}
\newcommand{\bv}{\pmb{v}}
\newcommand{\bw}{\pmb{w}}
\newcommand{\cO}{\mathcal{O}}
\newcommand{\cAut}{{\mathcal{A}ut}}
\newcommand{\cAutq}{{\mathcal{A}ut_\rhd}}
\newcommand{\cTrans}{{\mathcal{T}r}}
\newcommand{\bTrans}{{\mathbf{Tr}}}
\newcommand{\As}{{\mathrm{As}}}
\newcommand{\Aut}{{\mathrm{Aut}}}
\newcommand{\Autq}{{\mathrm{Aut}_\rhd}}
\newcommand{\Inn}{{\mathrm{Inn}}}
\newcommand{\Op}{{\mathrm{Op}}}
\newcommand{\SL}{{\mathrm{SL}}}
\newcommand{\Trans}{{\mathrm{Tr}}}
\newcommand{\Hom}{{\mathrm{Hom}}}
\newcommand{\Img}{{\mathrm{Im}}}
\newcommand{\sm}{{\mathrm{sm}}}
\newcommand{\Var}{{\mathrm{Var}}}
\newcommand{\Grp}{{\mathrm{Grp}}}
\title{Quandle varieties, generalized symmetric spaces and $\varphi$-spaces}
\author{Nobuyoshi Takahashi}
\address{
Department of Mathematics, Graduate School of Science, Hiroshima University, 
1-3-1 Kagamiyama, Higashi-Hiroshima, 739-8526 JAPAN}
\email{takahasi@math.sci.hiroshima-u.ac.jp}
\subjclass[2000]{14L10; 14M17, 53C35, 57M25}
\keywords{Quandle; Generalized symmetric space; $\varphi$-space; Algebraic variety}
\begin{document}

\maketitle

\begin{abstract}
We define a quandle variety as an irreducible algebraic variety $Q$ 
endowed with an algebraically defined quandle operation $\rhd$. 
It can also be seen as an analogue of a generalized affine symmetric space 
or a regular $s$-manifold in algebraic geometry. 

Assume that $Q$ is normal as an algebraic variety 
and that the action of the inner automorphism group 
has a dense orbit. 
Then we show that there is an algebraic group $G$ 
such that each orbit is isomorphic to the quandle $(G/H, \rhd_\varphi)$ 
associated to the group $G$, an automorphism $\varphi$ of $G$ 
and a subgroup $H$ of $G^\varphi$. 
\end{abstract}

\section{Introduction}

\subsection{Generalized symmetric spaces and $\varphi$-spaces}

A symmetric space 
is a Riemannian manifold 
with a symmetry $s_x$ around any point $x$. 
Here, by a symmetry around $x$, we mean an isometry which fixes $x$ 
and inverts tangent vectors at $x$. 
After the extensive studies by \'E. Cartan, 
the theory of symmetric spaces has been significantly generalized. 
One important direction was to regard them 
as manifolds endowed with the binary operation $x\cdot y:=s_x(y)$. 
(See \cite[Introduction and \S\S1--7]{SSS2002} for a survey 
of developments of the theory of generalized symmetric spaces.) 

In \cite{Loos1967} (see also \cite{Loos1969}), 
Loos characterized symmetric spaces in terms of binary operations. 
For the brevity of the presentation, 
let us mean by the word ``automorphism'' 
either an isometry, a diffeomorphism preserving an affine connection 
or just a diffeomorphism according to the context. 
Then a symmetric space is a Riemannian manifold $M$ 
endowed with a smooth binary operation $\cdot: M\times M\to M$ 
subject to the following conditions: 
\begin{enumerate}
\item
$x\cdot x=x$. 
\item
$s_x: M\to M; y\mapsto x\cdot y$ is an automorphism. 
\item
$x\cdot(y\cdot z)=(x\cdot y)\cdot(x\cdot z)$. 
\item
$s_x\circ s_x=id_M$ 
and $x$ is an isolated fixed point of $s_x$. 
\end{enumerate}
In the setting of affine differential geometry, 
the same conditions define an affine symmetric space, 
although Loos proved that 
the affine connection can be canonically constructed from the rest of the data. 

The notion of symmetric space was generalized 
by relaxing the condition (4). 
Loos considered manifolds with an operation satisfying the conditions (1)--(3) and 
\begin{itemize}
\item[(4R)] (reflexivity)
$s_x\circ s_x=id_M$, 
\end{itemize}
and named them reflexion spaces. 

Non-involutive symmetries were introduced to the field by Ledger(\cite{Ledger1967}). 
Then Kowalski and Fedenko 
considered the notions of regular $s$-manifold and generalized affine symmetric space. 
A regular $s$-manifold is defined as a manifold with an operation 
satisfying conditions (1)--(3) and 
\begin{enumerate}
\item[(4T)] (tangential regularity)
$T_xs_x-1$ is invertible on $T_xM$. 
\end{enumerate}
A generalized affine symmetric space 
is an affine manifold 
which \emph{admits} an operation satisfying conditions (1)--(3) and 
\begin{enumerate}
\item[(4I)] (isolatedness)
$x$ is an isolated fixed point of $s_x$. 
\end{enumerate}
In this case, 
the family $\{s_x\}$ of automorphisms 
is called an admissible $s$-structure. 

The conditions (4T) and (4I) are equivalent 
in the presence of a Riemannian metric or an affine connection, 
so a generalized affine symmetric space 
together with an admissible $s$-structure 
is a regular $s$-manifold. 

\medskip
Cartan proved that a symmetric space can be 
described as a special kind of homogeneous space. 
Loos, Fedenko and Kowalski gave generalizations of this result 
in their respective settings. 
In order to state their results, 
let us explain the notion of $\varphi$-space 
introduced by Vedernikov(\cite{Vedernikov1965}). 

\begin{definition}
Let $G$ be a Lie group, $\varphi$ an automorphism\footnote{
In the original definition, an endomorphism is allowed.} of $G$, 
and $H$ a subgroup of $G$ satisfying $(G^\varphi)^\circ \subseteq H\subseteq G^\varphi$. 
Then $G/H$, or the triple $(G, H, \varphi)$, is called a $\varphi$-space. 

It is called a regular\footnote{
There seems to be two meanings for the term ``regular'' here. 
For $s$-manifolds, 
the term regular refers to the condition $x\cdot (y\cdot z)=(x\cdot y)\cdot (x\cdot z)$, 
and for $\varphi$-spaces, it refers to the tangential regularity.} 
$\varphi$-space
if $T_{\bar{e}}\bar{\varphi}-1$ is invertible, 
where $\bar{e}$ is the residue class of the identity element 
and $\bar{\varphi}: G/H\to G/H$ is the induced diffeomorphism. 
\end{definition}

Extending the work of Loos on affine symmetric spaces (see Theorem \ref{theorem_loos}), 
Stepanov, Fedenko and Kowalski studied 
the relation between 
regular $s$-manifolds, generalized affine symmetric spaces and 
regular $\varphi$-spaces. 
Their works show that these three notions 
are essentially equivalent to each other.

\begin{theorem}(\cite[Theorems II. 4 and II.25]{Kowalski1980})
\label{theorem_kowalski}

(1)
If $(G, H, \varphi)$ is a regular $\varphi$-space, 
one can define an operation $\cdot_\varphi$ by 
\[
xH\cdot_\varphi yH := x\varphi(x^{-1}y)H, 
\]
and then $(G/H, \cdot_\varphi)$ is a regular $s$-manifold. 
It also admits a canonical affine connection 
which is invariant under the symmetries, 
so $G/H$ is a generalized affine symmetric space. 

(2)
Conversely, 
for a generalized affine symmetric space 
endowed with an admissible $s$-structure, 
one can construct a regular $\varphi$-space 
which gives the inverse correspondence to the above. 

(3)
Finally, if $M$ is a regular $s$-manifold, 
then there exists uniquely an affine connection on $M$ 
which makes $M$ a generalized affine symmetric space 
with an admissible $s$-structure $\{s_x\}$. 
\end{theorem}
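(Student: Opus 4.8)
The plan is to treat the three parts in turn, in each case reducing matters to the base point and then spreading out by homogeneity, since the group generated by the symmetries acts transitively by automorphisms. For part (1) I would first verify that $\cdot_\varphi$ descends to cosets: replacing $x$ by $xh$ and $y$ by $yk$ with $h,k\in H$ and using $\varphi|_H=\mathrm{id}_H$ (valid because $H\subseteq G^\varphi$), the factor $h\varphi(h^{-1})$ collapses to $e$ while $\varphi(k)=k\in H$ is absorbed into the coset, so the value is unchanged. Idempotency (1) is immediate from $xH\cdot_\varphi xH=x\varphi(e)H=xH$. For the distributive law (3) I would expand both sides in terms of group elements, where the cancellation $(x\varphi(x^{-1}y))^{-1}(x\varphi(x^{-1}z))=\varphi(y^{-1}z)$ makes the two sides coincide as $x\varphi(x^{-1}y)\varphi^2(y^{-1}z)H$; this is a purely formal identity once well-definedness is in hand, and it also gives (2), each $s_{xH}$ being a bijective self-map respecting $\cdot_\varphi$. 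At the base point $\bar e=eH$ one has $s_{\bar e}(yH)=\varphi(y)H=\bar\varphi(yH)$, so $T_{\bar e}s_{\bar e}=T_{\bar e}\bar\varphi$ and the regularity hypothesis on the $\varphi$-space is exactly (4T) at $\bar e$; transitivity then propagates (4T) to every point.

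For the canonical connection in part (1) and the whole of part (3) I would use a single construction. On a regular $s$-manifold introduce the tensor field $S$ with $S_x=T_xs_x$; invertibility of $S_x-1$, which is precisely (4T), is what drives everything. I would single out the connection $\tilde\nabla$ by demanding that every symmetry be affine, $s_x^*\tilde\nabla=\tilde\nabla$, and show that this determines $\tilde\nabla$ uniquely: the induced relation $S_x\,\tilde\nabla_XY=\tilde\nabla_{S_xX}(S_xY)$ together with invertibility of $S_x-1$ pins down the local connection coefficients. One then checks $\tilde\nabla S=0$, and parallelism of $S$ forces torsion and curvature to be parallel as well, so $(M,\tilde\nabla)$ is a generalized affine symmetric space with admissible $s$-structure $\{s_x\}$, proving (3); specializing to $M=G/H$ with $\cdot_\varphi$ yields the invariant connection asserted in (1).

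For part (2) I would run this backwards. Given a generalized affine symmetric space with admissible $s$-structure $\{s_x\}$ and base point $o$, let $G$ be the transvection (or full automorphism) group, $H=\Stab_G(o)$ and $\varphi=\mathrm{Int}(s_o)$. Equivariance $g\circ s_x\circ g^{-1}=s_{g(x)}$ shows that any $g\in H$ commutes with $s_o$, giving $H\subseteq G^\varphi$, while for $g\in(G^\varphi)^\circ$ the points $g\cdot o$ form a connected set of fixed points of $s_o$ through $o$, forcing $g\cdot o=o$ by isolatedness (4I); hence $(G^\varphi)^\circ\subseteq H\subseteq G^\varphi$ and $(G,H,\varphi)$ is a regular $\varphi$-space. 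Under the identification $gH\leftrightarrow g\cdot o$, the same equivariance turns $x\varphi(x^{-1}y)\cdot o$ into $s_{x\cdot o}(y\cdot o)$, so $\cdot_\varphi$ recovers the original operation and the correspondence is inverse to that of (1).

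The main obstacle is the construction and uniqueness of the canonical connection underpinning parts (1) and (3). The group-theoretic content is formal, but proving that the invariance equations $s_x^*\tilde\nabla=\tilde\nabla$ admit a unique solution---with tangential regularity (4T) being exactly the condition guaranteeing $S_x-1$ invertible and hence the solvability---together with verifying $\tilde\nabla S=0$ and deducing parallelism of the torsion and curvature, is where the real analytic work lies.
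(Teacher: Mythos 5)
First, a point of calibration: the paper does not prove this theorem. It is quoted from Kowalski's monograph (Theorems II.4 and II.25 of \cite{Kowalski1980}) as background in the introduction, so there is no in-paper argument to compare yours against; what follows assesses your sketch on its own terms and against Kowalski's proof.

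The group-theoretic half of your proposal is correct and standard: well-definedness of $\cdot_\varphi$ via $\varphi|_H=\mathrm{id}$, the cancellation giving left distributivity, the identification $s_{\bar e}=\bar\varphi$ so that regularity of the $\varphi$-space is exactly (4T) at $\bar e$, propagation by the transitive $G$-action through quandle automorphisms, and, in (2), the connectedness-plus-isolatedness argument for $(G^\varphi)^\circ\subseteq H$. The genuine gap sits where you yourself locate the difficulty, but it is worse than ``remaining analytic work'': the mechanism you propose for uniqueness of the canonical connection does not work. Invariance of a connection under the single symmetry $s_x$, read off at its fixed point $x$, is not captured by the relation between $\Gamma_x$ and $S_x$ alone (it also involves the $2$-jet of $s_x$ at $x$), and even the homogeneous part of that relation has nontrivial kernel in general: if $S_x$ has eigenvalues $\lambda_i$ with $\lambda_i\lambda_j=\lambda_k$ for some triple of indices, there is a nonzero difference tensor $D$ satisfying $D(S_xX,S_xY)=S_xD(X,Y)$. (For genuine symmetric spaces $S_x=-1$ and this kernel vanishes, which is why the naive pointwise argument succeeds there but not here; invertibility of $S_x-1$ does not repair it.) The actual proof must differentiate the invariance condition in the \emph{foot} variable $x$, using smoothness of $(x,y)\mapsto x\cdot y$, to obtain the explicit formula for $\tilde\nabla$ in terms of an auxiliary connection and $(S-1)^{-1}$; that is the step your sketch needs and does not contain. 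Separately, in part (2) you take for granted that the transvection (or automorphism) group is a finite-dimensional Lie group acting transitively on $M$: the Lie group structure comes precisely from the affine connection (the affine transformation group of a connected manifold with connection is a Lie group), and transitivity is an orbit argument using (4T)/(4I), so the connection of part (3) must already be in place before part (2) can be run. This ordering issue is worth making explicit, since it is the same rigidification step that the paper's main theorem has to reconstruct by other means in the algebraic category.
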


\subsection{Knots and quandles}

To any knot, 
Joyce(\cite{Joyce1982}) and Matveev(\cite{Matveev1982}) 
associated a certain algebraic system, 
which Joyce called the knot quandle. 
A quandle is a nonempty set $Q$ endowed with a binary operation $\rhd$ 
which satisfies the following conditions. 
\begin{enumerate}
\item
$q\rhd q=q$. 
\item
$s_q: Q\to Q; r\mapsto q\rhd r$ is a bijection. 
(We write $q\rhd^{-1}r$ for $s_q^{-1}(r)$.) 
\item
$q\rhd(r\rhd s)=(q\rhd r)\rhd(q\rhd s)$. 
\end{enumerate}
In more descriptive words, 
it is a distributive groupoid. 

To any knot or surface knot is associated its knot quandle 
(or fundamental quandle). 
Given any finite quandle, 
an invariant called the coloring number is defined 
for knots and surface knots. 
There is a theory of quandle homology and cohomology, 
which gives rise to even more invariants(\cite{CJKLS2003}). 

It is obvious that a regular $s$-manifold can be regarded as a quandle. 
Joyce already noted, citing \cite{Loos1967} and \cite{Loos1969}, 
that a reflexion space is a quandle. 
However, 
little attention has been paid 
to regular $s$-manifolds in knot theory, 
probably because finite quandles already provide plenty of useful information. 
Recently, 
Rubinsztein defined the notion of continuous quandle(\cite{Rubinsztein2007}). 
He also defined an invariant topological space $J_Q(L)$ 
for any continuous quandle $Q$ and any link $L$, 
which is a continuous generalization of the coloring number.

\subsection{Quandle varieties}

In this paper, we study quandles and generalized symmetric spaces 
in the category of algebraic varieties. 

By a quandle variety, 
we mean an algebraic variety $Q$ 
equipped with quandle operations $\rhd$ and $\rhd^{-1}$ 
which are regular maps of algebraic varieties. 
Here, an algebraic variety will be always assumed 
to be irreducible and reduced. 

For any quandle $Q$, 
we have two important groups of automorphisms. 
The group $\Inn(Q)$ is defined as the group 
generated by the automorphisms $s_q$. 
The group $\Trans(Q)$ is the subgroup of $\Inn(Q)$ 
consisting of products of the same numbers of $s_q$ and $s_q^{-1}$. 
It can be shown that $\Inn(Q)$-orbits and $\Trans(Q)$-orbits coincide, 
and we say that a quandle is $\rhd$-connected 
if it is comprised of one orbit. 

Our main theorem implies that a $\rhd$-connected quandle variety 
is an algebraic $\varphi$-space $G/H$, 
except that $H$ does not necessarily contain $(G^\varphi)^\circ$. 

\begin{theorem}[See Theorem \ref{theorem_main}]
Let $Q$ be a quandle variety which is irreducible and normal as a variety. 
Assume that $Q$ contains an open $\Inn(Q)$-orbit $O$. 

(1)
The group $\Trans(Q)$ 
can be given a structure of a connected algebraic group $\bTrans(Q)$ 
such that the action of $\bTrans(Q)$ on $Q$ is algebraic. 
If $Q$ is quasi-affine, then $\bTrans(Q)$ is affine. 

(2)
The open orbit $O$ is $\rhd$-connected, 
and there are natural isomorphisms 
$\Trans(O)\cong \Trans(Q)$ and $\Inn(O)\cong\Inn(Q)$.

(3)
For any $q\in Q$, 
the conjugation map $g\mapsto s_q g s_q^{-1}$ 
restricts to an automorphism $\varphi_q$ of $\bTrans(Q)$, 
the stabilizer $\bTrans(Q)_q$ is contained in $\bTrans(Q)^{\varphi_q}$ 
and the natural map $\bTrans(Q)/\bTrans(Q)_q\to\Inn(Q)q;\ g\bTrans(Q)_q\mapsto gq$ 
is an isomorphism of quandle varieties. 

Each orbit acts on another orbit, 
and this action can also be described 
using a certain self-map of $\bTrans(Q)$. 
\end{theorem}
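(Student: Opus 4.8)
The plan is to manufacture $\bTrans(Q)$ as a connected algebraic subgroup of $\Aut(Q)$ generated by the algebraic family of transvections, and then to read off parts (2) and (3) from the identity $s_q s_r s_q^{-1}=s_{q\rhd r}$, which is equivalent to quandle axiom (3). Fix $q_0\in O$ and set $\tau_q:=s_q s_{q_0}^{-1}$. Since $Q$ is a quandle variety, $(q,r)\mapsto s_q(r)$ and $(q,r)\mapsto s_q^{-1}(r)$ are morphisms, so $q\mapsto\tau_q$ is an algebraic family of automorphisms with $\tau_{q_0}=\mathrm{id}$; as $s_q s_r^{-1}=\tau_q\tau_r^{-1}$, this family generates $\Trans(Q)$. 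The first task is to place these automorphisms inside an honest algebraic group. For $Q$ quasi-projective one uses the automorphism group scheme of $Q$, whose identity component $\Aut^0(Q)$ is an algebraic group; the irreducible family $\{\tau_q\}$ passes through the identity, so it lands in $\Aut^0(Q)$. By the standard generation theorem for connected algebraic groups (the subgroup generated by a family of irreducible subvarieties through $e$ is a connected algebraic subgroup, covered by a bounded product of the generators), the subgroup generated by $\{\tau_q^{\pm1}\}$ is a connected algebraic subgroup $\bTrans(Q)\subseteq\Aut^0(Q)$ whose underlying abstract group is $\Trans(Q)$, and it is connected because $Q$ is irreducible. I expect this to be the main obstacle: controlling $\Aut(Q)$ and regularizing the a priori only rational action on the dense orbit into a regular action on all of $Q$. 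This is exactly where normality is essential, as it permits the birational action coming from the transvections to be extended to a regular one (Weil's regularization of rational actions, together with extension of rational maps from a normal variety into a group), guaranteeing that $\bTrans(Q)$ acts by genuine automorphisms of the whole variety $Q$ and not merely of $O$.

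Granting the algebraic structure, the affine assertion follows by a representation. When $Q$ is quasi-affine, the regular action makes $\bTrans(Q)$ act locally finitely on $\cO(Q)$, so there is a finite-dimensional $\bTrans(Q)$-stable subspace $V\subseteq\cO(Q)$ containing functions that define a locally closed immersion $Q\hookrightarrow\bA^n$; the resulting homomorphism $\bTrans(Q)\to\mathrm{GL}(V)$ is then injective, exhibiting $\bTrans(Q)$ as a linear, hence affine, group. For part (2), note first that each $s_q\in\Inn(Q)$ preserves the $\Inn(Q)$-orbit $O$, so $O$ is a subquandle; being a single orbit, and $\Inn$- and $\Trans$-orbits coinciding, it is $\rhd$-connected. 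Restriction gives an injection $\Inn(Q)\hookrightarrow\Aut(O)$, since an automorphism of the separated reduced variety $Q$ is determined by its values on the dense open $O$. That the image is $\Inn(O)$ (and similarly for $\Trans$) follows by applying the generation theorem to the morphism $O\to\bTrans(Q),\ q\mapsto\tau_q$: the subgroup generated by $\{\tau_q:q\in O\}$ is closed, hence contains the closure $\overline{\{\tau_q:q\in O\}}=\overline{\{\tau_q:q\in Q\}}$ (as $O$ is dense and $q\mapsto\tau_q$ is a morphism), and therefore already equals $\bTrans(Q)$. Thus $\Trans(O)\cong\Trans(Q)$ and $\Inn(O)\cong\Inn(Q)$.

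For part (3), the identity $s_q s_r s_q^{-1}=s_{q\rhd r}$ shows that conjugation $c_q\colon g\mapsto s_q g s_q^{-1}$ sends $s_r\mapsto s_{s_q(r)}$ and so permutes the generating family $\{\tau_r\}$ through the morphism $r\mapsto q\rhd r$; since $\bTrans(Q)$ is a bounded product of these generators, both $c_q$ and its inverse $c_q^{-1}$ (conjugation by $s_q^{-1}$) are morphisms, so $c_q$ restricts to an algebraic automorphism $\varphi_q$ of $\bTrans(Q)$. Every element of $\bTrans(Q)$ acts as a quandle automorphism, this being a closed condition satisfied on the dense generating set $\Trans(Q)\subseteq\Autq(Q)$; hence if $g\in\bTrans(Q)_q$ then $g$ is a quandle automorphism fixing $q$, so $g s_q=s_q g$ and $\varphi_q(g)=g$. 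Thus $\bTrans(Q)_q\subseteq\bTrans(Q)^{\varphi_q}$, and consequently $\rhd_{\varphi_q}$ descends to $\bTrans(Q)/\bTrans(Q)_q$. The orbit map $g\bTrans(Q)_q\mapsto gq$ is a bijective morphism onto the locally closed orbit $\Inn(Q)q=\bTrans(Q)q$, and it is an isomorphism because the orbit map of an algebraic group action is separable (for instance over a field of characteristic zero).

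It remains to see that this isomorphism respects the quandle operations. Using $s_{gq}=g s_q g^{-1}$ (valid since $g$ is a quandle automorphism) and $s_q^{-1}(q)=q$, one computes $gq\rhd g'q=g s_q(g^{-1}g')q=g\,\varphi_q(g^{-1}g')\,q$, which is precisely the image of $g\bTrans(Q)_q\rhd_{\varphi_q}g'\bTrans(Q)_q$; hence the orbit map is an isomorphism of quandle varieties, identifying $\Inn(Q)q$ with the $\varphi_q$-space $(\bTrans(Q)/\bTrans(Q)_q,\rhd_{\varphi_q})$. The same computation, carried out for $q$ in one orbit and $r$ in another, shows that $s_q$ preserves the second orbit and that $q\rhd r$ is governed by the self-map $g\mapsto g\,\varphi_{q}(g^{-1})$ of $\bTrans(Q)$ applied via the action, which yields the closing assertion that each orbit acts on another through a self-map of $\bTrans(Q)$. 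The crux throughout is the regularization step in part (1); everything else is formal consequence of the quandle identity and the generation theorem.
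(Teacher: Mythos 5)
Your parts (2) and (3) are essentially the paper's own formal manipulations (Lemma \ref{lem_trans} and Proposition \ref{prop_homogeneous_quandle}), but part (1) --- which you correctly identify as the crux --- rests on a step that fails. You propose to embed the family $\{\tau_q\}$ into the identity component $\Aut^0(Q)$ of ``the automorphism group scheme of $Q$'' and then invoke the generation theorem for connected algebraic groups. For proper $Q$ such a group scheme exists, but the theorem assumes only that $Q$ is irreducible and normal (indeed the quasi-affine case is singled out in the statement), and for non-proper varieties the automorphism functor is not representable by a group scheme of finite type: $\Aut(\bA^n)$ is not an algebraic group, a point the paper itself makes when explaining why one cannot work with $\Autq(Q)$ directly. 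Without an ambient algebraic group, the generation theorem and the ``bounded product of generators'' argument have nothing to apply to. Weil's regularization theorem does not repair this: it converts a \emph{rational} action of a \emph{given} algebraic group into a regular one, whereas the problem here is to produce the algebraic group structure on the abstract group $\Trans(Q)$ in the first place. (That this is a real issue and not a technicality is shown by the paper's remark that $\Inn(Q)$ need not be algebraic at all, e.g.\ for $x\rhd y=x+a(y-x)$.)

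The paper's route around this is the content you are missing: it constructs an auxiliary $Q$-variety $X=\Hom_{O^n}(k^{\dim Q}\times O^n, TO^n)\times O$ and a point $x\in X$ whose $\Autq(Q)$-stabilizer is trivial --- the ``rigidification'' by a frame of vector fields built from the multiplication map $\mu:Q^n\times Q\to Q$. Then $\Trans(Q)$ is identified set-theoretically with the orbit $\Trans(Q)x$, which is locally closed in $X$ by Proposition \ref{prop_trans_orbit_in_x} (orbits stabilize after boundedly many steps); this is what replaces your bounded-generation argument. Regularity of the resulting action map on $Q$ is then proved by the smooth-plus-birational lemma, and this is where normality of $Q$ actually enters --- not through Weil regularization. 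Your argument for $\Trans(O)\cong\Trans(Q)$ likewise presupposes the ambient group; the paper instead deduces it from the equality of $\Trans(O,X)x$ and $\Trans(Q,X)x$ (Proposition \ref{prop_orbit_opensub}) combined with triviality of the stabilizer. Once part (1) is in place by this construction, your computations for (2) and (3) go through as written.
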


The plan of this paper is as follows. 
We recall the definitions and elementary results 
related to quandles in \S2. 
In \S3, 
we define quandle varieties 
and show several basic results on orbits 
which are similar to results on orbits of an algebraic group action. 
We state and prove the main theorem in \S4. 
The key is to find a space with a $Q$-action 
such that $\Trans(Q)$ can be identified with an orbit. 
In the last section, 
we show a result on the relation 
between isolatedness of fixed points 
and $\rhd$-connectedness, 
which gives a partial answer 
to an algebraic version of a question of Kowalski.

\section{Quandles}

We recall the definition of quandles 
and basic results that will be used in the sequel. 
Most of the contents in this section are from \cite{Joyce1982}. 
In this paper, we use the convention where $Q$ ``acts'' from the left. 

\begin{definition}
A quandle is a nonempty set $Q$ equipped with a binary operation $\rhd$ 
satisfying the following conditions. 
\begin{enumerate}
\item
For any $q\in Q$, $q\rhd q=q$ holds. 
\item
For any $q, r\in Q$, 
there exists a unique element $r'\in Q$ such that $q\rhd r'=r$. 
\item
For any $q, r, s\in Q$, 
$q\rhd(r\rhd s)=(q\rhd r)\rhd(q\rhd s)$. 
\end{enumerate}
We define $s_q: Q\to Q$ by $s_q(r)=q\rhd r$, 
which is bijective by (2). 
We write $q\rhd^{-1} r$ for $s_q^{-1}(r)$. 
For $\sigma_1, \dots, \sigma_n\in\{\pm 1\}$, 
we write $q_n\rhd^{\sigma_n} q_{n-1}\rhd^{\sigma_{n-1}}\cdots\rhd^{\sigma_1} q_0$ 
for $q_n\rhd^{\sigma_n} (q_{n-1}\rhd^{\sigma_{n-1}}(\cdots\rhd^{\sigma_1} q_0))$. 
\end{definition}

\begin{definition}
Let $Q$ and $Q'$ be quandles. 
A homomorphism from $Q$ to $Q'$ is a map $f: Q\to Q'$ 
such that $f(q\rhd r)=f(q)\rhd f(r)$ holds for any $q, r\in Q$. 

An isomorphism is a homomorphism which admits an inverse homomorphism. 
An isomorphism onto itself is called an automorphism. 
\end{definition}

The following is immediate from the definitions. 
\begin{proposition}
(1) 
If $f$ is a homomorphism, $f(q\rhd^{-1}r)=f(q)\rhd^{-1}f(r)$ holds. 

(2)
A homomorphism is an isomorphism if and only if it is bijective. 

(3)
The map $s_q$ is an automorphism for any $q\in Q$. 
\end{proposition}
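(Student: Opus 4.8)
The plan is to derive all three assertions directly from the quandle axioms, establishing (1) and (2) first and then invoking (2) to obtain (3). Each reduces to a short manipulation of the defining conditions, so the work is bookkeeping rather than genuine difficulty.

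For part (1), I would begin from the defining property of $\rhd^{-1}$. Setting $r' := q\rhd^{-1}r$, the element $r'$ is characterized by $q\rhd r'=r$. Applying the homomorphism $f$ and using $f(q\rhd r')=f(q)\rhd f(r')$ gives $f(q)\rhd f(r')=f(r)$. The uniqueness clause in axiom (2), now applied inside $Q'$, forces $f(r')=f(q)\rhd^{-1}f(r)$, which is exactly the claim. The only point to verify is that axiom (2) genuinely supplies uniqueness of the solution, which it does by definition.

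For part (2), the forward direction is immediate: an inverse homomorphism is in particular a two-sided set-theoretic inverse, so an isomorphism is bijective. For the converse, given a bijective homomorphism $f$, I would check that the set-theoretic inverse $g=f^{-1}$ is again a homomorphism. Writing arbitrary elements of $Q'$ as $f(q)$ and $f(r)$ and using $f(q)\rhd f(r)=f(q\rhd r)$, applying $g$ yields $g(f(q)\rhd f(r))=q\rhd r=g(f(q))\rhd g(f(r))$, so $g$ is a homomorphism and $f$ is an isomorphism. For part (3), bijectivity of $s_q$ is precisely axiom (2), and to see that $s_q$ is a homomorphism I would compute $s_q(r\rhd s)=q\rhd(r\rhd s)$ and apply self-distributivity (axiom (3)) to rewrite this as $(q\rhd r)\rhd(q\rhd s)=s_q(r)\rhd s_q(s)$; thus $s_q$ is a bijective homomorphism, and part (2) upgrades it to an automorphism.

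There is no serious obstacle here, in line with the remark that the statement is immediate from the definitions. The only subtlety worth flagging is the logical order: part (3) is cleanest once part (2) is available, so that bijectivity together with the homomorphism property yields an automorphism directly, rather than requiring a separate construction and verification of the inverse map.
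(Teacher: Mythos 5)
Your proof is correct and is exactly the standard elaboration of what the paper leaves implicit (the paper gives no proof, stating only that the proposition is immediate from the definitions). All three parts check out: the uniqueness clause of axiom (2) for part (1), surjectivity to write elements of $Q'$ as images for part (2), and self-distributivity plus part (2) for part (3).
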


\begin{example}
If $Q$ is a nonempty set, 
then $q\rhd r:=r$ defines the structure of a trivial quandle. 
\end{example}

\begin{example}\label{ex_ext_of_trivial}
Let $X$ be a nonempty set and $A$ an abelian group. 
Let $F: X\times X\to A$ be a map. 
On $Q=X\times A$, 
the operation $(x, a)\rhd (y, b):=(y, b+F(x, y))$ gives a quandle structure 
if and only if $F(x, x)=0$ for any $x$. 
\end{example}

\begin{example}\label{ex_conj}
If $G$ is a group, 
then $g\rhd h:=g^{-1}hg$ gives a quandle structure on $G$. 
A conjugacy class in $G$ is also a quandle. 
\end{example}

\begin{example}\label{ex_phi_set}
If $G$ is group, $\varphi$ is an automorphism of $G$ 
and $H$ is contained in $G^\varphi$, 
then $xH\rhd_\varphi yH:=x\varphi(x^{-1}y)H$ 
gives a quandle structure on $G/H$. 

In particular, 
if $G=V$ is a vector space,  $\varphi$ is a linear automorphism 
and $H=0$, 
then $\bv\rhd \bw:=\bv+\varphi(\bw-\bv)$ defines a quandle. 
\end{example}

\begin{example}\label{ex_vedernikov}
If $G$ is group and $\varphi$ is an automorphism of $G$, 
then $x\rhd'_\varphi y:=x\varphi(yx^{-1})$ 
gives a quandle structure on $G$. 

This is related to the previous example 
in the following way(\cite{Vedernikov1965}). 
Consider the action of $G$ on $(G, \rhd'_\varphi)$ defined by 
$x\cdot a=xa\varphi(x^{-1})$. 
Then $G/G^\varphi$ can be identified with the orbit of $e$ 
and the quandle operations coincide under this identification. 
More generally, the orbit of $a$ 
can be identified with $(G/G^{\varphi_a}, \rhd_{\varphi_a})$, 
where $\varphi_a(x):=a\varphi(x)a^{-1}$. 
\end{example}

\begin{definition}
We define the automorphism group of $Q$ as 
\[
\Autq(Q):=\{ f: Q\to Q| f(q\rhd r)=f(q)\rhd f(r)\hbox{ and $f$ is bijective}\} 
\]
and the inner automorphism group of $Q$ as 
\[
\Inn(Q):=\langle s_q| q\in Q\rangle. 
\]
Let 
\[
\Trans_k(Q):=
\{s_{b_1}^{-1}\circ\cdots\circ s_{b_k}^{-1}
\circ s_{a_k}\circ\cdots\circ s_{a_1} | a_i, b_i\in Q\} 
\]
and define the group of transvections of $Q$ to be 
\[
\Trans(Q):=\bigcup_{k\in\bN}\Trans_k(Q). 
\]
\end{definition}

We will see that $\Trans(Q)$ is in fact a group. 
Moreover, $\Inn(Q)$ and $\Trans(Q)$ are normal subgroups 
of $\Autq(Q)$. 

\begin{lemma}\label{lem_trans}
(1)
For any $f\in\Autq(Q)$ and $q\in Q$, 
$f\circ s_q=s_{f(q)}\circ f$ holds. 

(2)
For any nonnegative integers $k$ and $l$, 
let $\sigma_1, \dots, \sigma_{k+l}\in\{\pm 1\}$ be such that 
$\#\{i| \sigma_i=1\}=k$ and $\#\{i| \sigma_i=-1\}=l$. 
Define 
\[
S_{\sigma_1, \dots, \sigma_{k+l}}:=
\{s_{a_{k+l}}^{\sigma_{k+l}}\circ\cdots\circ s_{a_1}^{\sigma_1}| a_1, \dots, a_{k+l}\in Q\}. 
\]
Then $S_{\sigma_1, \dots, \sigma_{k+l}}$ depends only on $k$ and $l$. 
\end{lemma}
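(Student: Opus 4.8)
The plan is to handle the two parts in turn, deriving part (2) from part (1) by a rewriting argument.

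Part (1) is immediate from the definition of a quandle homomorphism. I would evaluate both sides at an arbitrary $r\in Q$ and use that $f$ preserves $\rhd$: $(f\circ s_q)(r)=f(q\rhd r)=f(q)\rhd f(r)=s_{f(q)}(f(r))=(s_{f(q)}\circ f)(r)$; since $r$ is arbitrary, the two maps coincide.

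For part (2), the key input is the special case of (1) in which $f$ is taken to be some $s_c$; this is legitimate because each $s_c$ is an automorphism of $Q$. It gives the commutation relation $s_c\circ s_q=s_{c\rhd q}\circ s_c$ for all $c,q\in Q$. Rearranging it, I would extract the two ``swap'' identities $s_a\circ s_b^{-1}=s_{a\rhd b}^{-1}\circ s_a$ and $s_a^{-1}\circ s_b=s_{a\rhd^{-1}b}\circ s_a^{-1}$, valid for all $a,b\in Q$, where the new indices $a\rhd b$ and $a\rhd^{-1}b$ again lie in $Q$. Each identity rewrites a product having a $(+1,-1)$ (respectively $(-1,+1)$) pair of exponents at two adjacent positions as one in which those two exponents are transposed, leaving the total number of factors unchanged. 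Applying the first identity to the factors in positions $i{+}1,i$ of an arbitrary element shows that interchanging an adjacent $(+1,-1)$ pair sends $S_{\sigma_1,\dots,\sigma_{k+l}}$ into the set for the transposed sequence, and the second identity gives the reverse inclusion; hence each such transposition of opposite adjacent exponents leaves the associated set unchanged.

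It then remains to observe that any two exponent sequences $(\sigma_1,\dots,\sigma_{k+l})$ and $(\tau_1,\dots,\tau_{k+l})$ having the same number $k$ of $+1$'s and $l$ of $-1$'s are connected by finitely many transpositions of adjacent positions carrying opposite signs; this is elementary, since bubble-sorting each sequence to the canonical arrangement with all $+1$'s first only ever swaps adjacent unequal signs. Carrying out the corresponding swaps term by term identifies $S_{\sigma_1,\dots,\sigma_{k+l}}$ with $S_{\tau_1,\dots,\tau_{k+l}}$, which is the assertion. I do not anticipate a genuine obstacle here; the only points demanding care are the index bookkeeping in the swap identities---checking that the transposed indices range over all of $Q$ so that one obtains a two-sided set equality rather than a single inclusion at each step---and the verification that the combinatorial reduction uses only transpositions of opposite signs, which is precisely the class of moves the two identities supply.
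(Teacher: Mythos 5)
Your proposal is correct and follows essentially the same route as the paper: part (1) by evaluating at an arbitrary point and using the homomorphism property, and part (2) by specializing (1) to $f=s_a^{\pm1}$ to obtain commutation identities that let one rearrange the exponent sequence into a canonical form. The paper's version of (2) is just more terse—it records only the single identity $s_a^{-1}\circ s_b=s_{s_a^{-1}(b)}\circ s_a^{-1}$ and says the inverses can be "gathered to the right," leaving implicit the bijectivity of the index substitution and the adjacent-transposition bookkeeping that you spell out.
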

\begin{proof}
(1)
This follows from 
\[
(f\circ s_q)(r) = f(q\rhd r) 
= f(q)\rhd f(r) 
= (s_{f(q)}\circ f)(r). 
\]

(2)
Since $s_a^{-1}$ is an automorphism, 
it follows from (1) 
that $s_a^{-1}\circ s_b=s_{s_a^{-1}(b)}\circ s_a^{-1}$. 
So $s_{a_i}^{-1}$'s can be gathered to the right. 
\end{proof}

\begin{proposition}
$\Inn(Q)$ and $\Trans(Q)$ are normal subgroups of $\Autq(Q)$. 
\end{proposition}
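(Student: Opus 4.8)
The plan is to first verify that $\Trans(Q)$ is a group, and then to deduce normality of both $\Inn(Q)$ and $\Trans(Q)$ from the conjugation formula in Lemma \ref{lem_trans}(1). The entire argument rests on the single observation that conjugation by an automorphism $f$ sends each generator $s_q^{\pm 1}$ to $s_{f(q)}^{\pm 1}$, thereby preserving the combinatorial data (the generating set, or the signed count of factors) that defines each subgroup.

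First I would treat $\Inn(Q)$. Since $\Inn(Q)$ is by definition the subgroup generated by the $s_q$, it is already a subgroup, so only normality requires proof. For $f \in \Autq(Q)$, Lemma \ref{lem_trans}(1) gives $f \circ s_q \circ f^{-1} = s_{f(q)}$, so conjugation by $f$ permutes the generating set $\{s_q \mid q \in Q\}$. Hence $f\,\Inn(Q)\,f^{-1} \subseteq \Inn(Q)$ for every $f$, and applying this to $f^{-1}$ as well yields equality, establishing normality.

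Next I would show that $\Trans(Q)$ is a group, which is the step demanding the most care. Using Lemma \ref{lem_trans}(2), I would reinterpret $\Trans_k(Q)$ as the set of all composites of $k$ factors of the form $s_a$ and $k$ factors of the form $s_a^{-1}$ taken in an arbitrary order. Closure under composition then follows at once: a product of an element of $\Trans_k(Q)$ with an element of $\Trans_l(Q)$ uses $k+l$ positive and $k+l$ negative factors, so it lies in $\Trans_{k+l}(Q)$ by Lemma \ref{lem_trans}(2). The identity lies in $\Trans_0(Q)$, and the inverse of a composite with $k$ positive and $k$ negative factors is again such a composite, hence in $\Trans_k(Q)$. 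Thus $\Trans(Q) = \bigcup_{k} \Trans_k(Q)$ is a subgroup, contained in $\Inn(Q)$.

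Finally, normality of $\Trans(Q)$ follows exactly as for $\Inn(Q)$: given $f \in \Autq(Q)$ and an element $t = s_{a_n}^{\sigma_n} \circ \cdots \circ s_{a_1}^{\sigma_1} \in \Trans_k(Q)$, the conjugation formula gives $f \circ t \circ f^{-1} = s_{f(a_n)}^{\sigma_n} \circ \cdots \circ s_{f(a_1)}^{\sigma_1}$, which carries the same number of positive and negative factors and therefore remains in $\Trans_k(Q)$. The only genuine obstacle is the rearrangement needed to establish the group structure of $\Trans(Q)$; once Lemma \ref{lem_trans}(2) is in hand, every remaining step is formal.
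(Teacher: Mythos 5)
Your proof is correct and follows essentially the same route as the paper: Lemma \ref{lem_trans}(2) to see that $\Trans(Q)$ is closed under composition and inversion (hence a subgroup), and the conjugation formula $f\circ s_q\circ f^{-1}=s_{f(q)}$ from Lemma \ref{lem_trans}(1) to get normality of both $\Inn(Q)$ and $\Trans(Q)$. You simply spell out the closure and inverse checks that the paper leaves implicit.
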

\begin{proof}
From (2) of the previous lemma, 
we see that $\Trans(Q)$ is a subgroup. 
By (1) of the lemma, 
we have 
\[
f\circ s_{a_k}^{\sigma_k}\circ\cdots\circ s_{a_1}^{\sigma_1}\circ f^{-1}=
s_{f(a_k)}^{\sigma_k}\circ\cdots\circ s_{f(a_1)}^{\sigma_1}, 
\]
so $\Inn(Q)$ and $\Trans(Q)$ are normal subgroups. 
\end{proof}

\begin{definition}
Let $Q$ be a quandle and $X$ a set. 
An action of $Q$ on $X$ is a map $Q\times X\to X; (q, x)\mapsto q\rhd x$ 
subject to the following conditions. 
\begin{enumerate}
\item
For any $q\in Q$ and $x\in X$, 
there exists a unique element $x'\in X$ such that $q\rhd x'=x$. 
\item
For any $q, r\in Q$ and $x\in X$, 
$q\rhd(r\rhd x)=(q\rhd r)\rhd(q\rhd x)$. 
\end{enumerate}
For any $q\in Q$, we define $s_q: X\to X$ by $s_q(x)=q\rhd x$. 
We write $q\rhd^{-1} x$ for $s_q^{-1}(x)$. 

Let $\Op(Q, X)$ be the group generated by $\{s_q| q\in Q\}$. 
Let $\Trans_k(Q, X)$ and $\Trans(Q, X)$ be defined as in the previous definition. 
\end{definition}

We can prove the following 
by the same arguments as in the proof of the previous proposition. 
\begin{proposition}
$\Trans(Q, X)$ is a normal subgroup of $\Op(Q, X)$. 
\end{proposition}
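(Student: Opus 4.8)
The plan is to mirror the two-step argument used for the previous proposition, replacing the role of Lemma \ref{lem_trans} by the corresponding statements for the action operators $s_q\colon X\to X$. The only input that distinguishes an action from an arbitrary family of permutations is axiom (2) in the definition of an action, so first I would extract from it the fundamental commutation relation. Writing axiom (2) as $s_q(s_r(x))=s_{q\rhd r}(s_q(x))$ for all $x\in X$ gives the identity $s_q\circ s_r=s_{q\rhd r}\circ s_q$ in $\Op(Q,X)$, equivalently $s_q\circ s_r\circ s_q^{-1}=s_{q\rhd r}$. This is the exact analogue of Lemma \ref{lem_trans}(1): conjugation by a generator $s_q$ carries the generator $s_r$ to the generator $s_{q\rhd r}$. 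Substituting $b=q\rhd r$, I would then derive the ``gathering'' relation $s_q^{-1}\circ s_b=s_{q\rhd^{-1}b}\circ s_q^{-1}$, which moves an inverse generator to the right past a generator at the cost of relabeling.

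Using the gathering relation I would establish the analogue of Lemma \ref{lem_trans}(2): any composite of $k$ generators $s_a$ and $l$ inverse generators $s_b^{-1}$, in whatever order, can be rewritten with all inverses gathered to the right, so the resulting set of maps depends only on $k$ and $l$. Applying this with $l=k$ shows that a product of an element of $\Trans_k(Q,X)$ and an element of $\Trans_{k'}(Q,X)$ lands in $\Trans_{k+k'}(Q,X)$, and that the inverse of an element of $\Trans_k(Q,X)$ again lies in $\Trans_k(Q,X)$. Hence $\Trans(Q,X)=\bigcup_k\Trans_k(Q,X)$ is a subgroup of $\Op(Q,X)$.

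For normality, since $\Op(Q,X)$ is generated by the $s_q$, it suffices to check that conjugation by a single generator $s_q^{\pm 1}$ preserves $\Trans(Q,X)$. Conjugating a typical element $t=s_{b_1}^{-1}\circ\cdots\circ s_{b_k}^{-1}\circ s_{a_k}\circ\cdots\circ s_{a_1}$ of $\Trans_k(Q,X)$ by $s_q$, I would insert $s_q^{-1}\circ s_q$ between consecutive factors and apply the conjugation relation to each factor, obtaining $s_q\,t\,s_q^{-1}=s_{q\rhd b_1}^{-1}\circ\cdots\circ s_{q\rhd b_k}^{-1}\circ s_{q\rhd a_k}\circ\cdots\circ s_{q\rhd a_1}$, which is again in $\Trans_k(Q,X)$; conjugation by $s_q^{-1}$ is identical with $q\rhd$ replaced by $q\rhd^{-1}$. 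This gives normality.

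The computations here are entirely routine, so I do not expect a real obstacle; the one point that genuinely requires the action hypothesis is the derivation of $s_q\circ s_r=s_{q\rhd r}\circ s_q$ from axiom (2). I would flag this as the sole place where the argument for $\Op(Q,X)$ and $\Trans(Q,X)$ uses more than formal group theory, everything downstream being a transcription of the group-case proof.
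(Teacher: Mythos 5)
Your proof is correct and follows essentially the same route as the paper, which simply states that the result is proved ``by the same arguments as in the proof of the previous proposition.'' You have in fact made explicit the one point the paper leaves implicit, namely that the conjugation identity $s_q\circ s_r\circ s_q^{-1}=s_{q\rhd r}$ for the operators on $X$ comes from axiom (2) of the action definition and plays the role of Lemma \ref{lem_trans}(1); everything else is the same gathering and conjugation argument.
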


\begin{remark}\label{rem_action} 
(1)
There is a natural action of $Q$ on itself, 
for which 
$\Op(Q, Q)=\Inn(Q)$ and $\Trans(Q, Q)=\Trans(Q)$ hold. 

(2)
Let $\As(Q)$ be the group 
\[
\As(Q):=
\langle
g_q | q\in Q || 
g_q g_r=g_{q\rhd r} g_q 
\rangle. 
\]
Then an action of $Q$ on $X$ is equivalent to 
a group action of $\As(Q)$ on $X$, 
and there is a natural surjective homomorphism 
$\As(Q)\to \Op(Q, X)$. 
Let $\As(Q)_0$ be the subgroup defined by 
\[
\As(Q)_0:=\left\{g_{q_1}^{e_1}\cdots g_{q_k}^{e_k}\left| \sum_{i=1}^k e_i=0\right.\right\}, 
\]
then the above homomorphism restricts to a surjective homomorphism 
$\As(Q)_0\to \Trans(Q, X)$. 

(3)
Note that an action of $Q$ on $X$ does not necessarily 
induce a homomorphism $\Inn(Q)\to \Op(Q, X)$. 
In other words, a $Q$-action on $X$ does not necessarily factor through $\Inn(Q)$. 
For example, let $Q=\bA^1$ be endowed with the trivial operation $q\rhd r=r$. 
Then $Q$ acts on $X=\bA^1$ by $q\rhd x=x+1$. 
This action does not factor through $\Inn(Q)=\{id_Q\}$. 

(4)
Similarly, if we make the trivial quandle $Q=\bA^1$ act on $X=\bA^1$ by $q\rhd x=x+q$, 
then $s_r^{-1}s_q(x)=x+q-r$ 
and so the action of $\Trans(Q, X)$ does not factor through 
$\Trans(Q)=\{id_Q\}$. 
\end{remark}

\begin{proposition}\label{prop_inn_trans_in_q}
The equality $\Inn(Q)q=\Trans(Q)q$ holds 
for any $q\in Q$. 
\end{proposition}
\begin{proof}
It is clear that $\Trans(Q)q$ is contained in $\Inn(Q)q$. 
For the other direction, 
write an element $q'$ of $\Inn(Q)q$ as 
$s_{a_k}^{\sigma_k}\circ\cdots\circ s_{a_1}^{\sigma_1}(q)$. 
Then we have 
\[
q'=s_{q'}^{-\sum \sigma_i}\circ s_{a_k}^{\sigma_k}\circ\cdots\circ s_{a_1}^{\sigma_1}(q) 
\in\Trans(Q)q. 
\]
\end{proof}

\begin{remark}\label{rem_inn_trans_in_x}
For an action of $Q$ on a set $X$, 
the $\Op(Q, X)$-orbits are not necessarily the same as the $\Trans(Q, X)$-orbits. 
In the example of Remark \ref{rem_action}(3), 
we have $\Op(Q, X)0=\bZ$ while $\Trans(Q, X)0=\{0\}$. 
\end{remark}

The study of orbits plays an important role 
in the proof of our main theorem. 
Let us introduce the following notation. 
\begin{definition}
Let $Q$ be a quandle acting on a set $X$ 
and let $Z$ be a subset of $X$. 

For $i\in\bN$, we define $Q^iZ$ inductively by 
$Q^0Z=Z$ and $Q^{i+1}Z=Q\rhd Q^iZ$. 

Similarly, define 
$Q^{-i}Z$ inductively by 
$Q^0Z=Z$ and $Q^{-(i+1)}Z=Q\rhd^{-1} Q^{-i}Z$. 

Define $Q^{\pm i}Z$ by 
$Q^{\pm 0}Z=Z$ and 
$Q^{\pm(i+1)}Z=(Q\rhd Q^{\pm i}Z)\cup(Q\rhd^{-1} Q^{\pm i}Z)$. 
\end{definition}

\begin{proposition}\label{prop_orbit}
(1)
$\Op(Q, X)Z=\bigcup_{i\in\bN} Q^{\pm i}Z$. 

(2)
$\Trans_i(Q, X)\subseteq \Trans_{i+1}(Q, X)$, 
hence 
$\Trans_i(Q, X)Z\subseteq \Trans_{i+1}(Q, X)Z$, 
for any $i\in\bN$. 

(3)
For the natural action of $Q$ on $X=Q$, 
there are inclusions $Q^iZ\subseteq Q^{i+1}Z$, 
$Q^{-i}Z\subseteq Q^{-(i+1)}Z$ and 
$Q^{\pm i}Z\subseteq Q^{\pm(i+1)}Z$. 

(4)
If $f$ is an element of $\Op(Q, X)$, 
then 
$Q^i f(Z)=f(Q^iZ)$, 
$Q^{-i} f(Z)=f(Q^{-i}Z)$ and 
$\Trans_i(Q) f(Z)=f(\Trans_i(Q)Z)$ hold.

(5)
If $Z\subseteq X$ is stable under the action of $Q$, 
i.e. $Q\rhd Z\subseteq Z$ and $Q\rhd^{-1}Z\subseteq Z$, 
then $q\rhd Z=q\rhd^{-1} Z=Z$ holds for any $q\in Q$. 
\end{proposition}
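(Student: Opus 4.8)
The plan is to treat the five parts largely independently, each reducing to an inductive step that unwinds one application of a generator $s_q$. For (1), I would observe that, by definition of $\Op(Q,X)$, every element is a finite word $s_{a_n}^{\sigma_n}\circ\cdots\circ s_{a_1}^{\sigma_1}$ with $\sigma_i\in\{\pm1\}$, and that the collection of images $s_{a_i}^{\sigma_i}\circ\cdots\circ s_{a_1}^{\sigma_1}(Z)$ over all words of length exactly $i$ is precisely the inductive definition of $Q^{\pm i}Z$, since $Q^{\pm(i+1)}Z=(Q\rhd Q^{\pm i}Z)\cup(Q\rhd^{-1}Q^{\pm i}Z)$ records the two choices $\sigma_{i+1}=\pm1$ and all choices of $a_{i+1}$. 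Hence $\bigcup_{i}Q^{\pm i}Z$ collects the images of $Z$ under all words of all lengths, i.e. $\Op(Q,X)Z$. For (2), since $Q$ is nonempty I would fix any $c\in Q$ and insert the identity $s_c^{-1}\circ s_c$ between the negative and positive blocks of a word representing an element of $\Trans_i(Q,X)$; this directly exhibits the element in the defining form of $\Trans_{i+1}(Q,X)$ with one extra factor of each sign, and applying it to $Z$ gives the asserted orbit inclusion.

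For (3) the essential input is the idempotency axiom $q\rhd q=q$, which is exactly what fails for the pathological actions of Remark \ref{rem_action}(4) and explains why the statement is restricted to the natural action $X=Q$. I would first prove the one-step fact that $W\subseteq Q\rhd W$ for every subset $W\subseteq Q$: each $w\in W$ satisfies $w=w\rhd w=s_w(w)\in Q\rhd W$, the point being that $w$ itself is an admissible left operand. Reading the same identity as $w=w\rhd^{-1}w$ gives $W\subseteq Q\rhd^{-1}W$. Taking $W=Q^iZ$, $W=Q^{-i}Z$ and $W=Q^{\pm i}Z$ respectively yields the three inclusions, the last because $Q^{\pm(i+1)}Z$ contains $Q\rhd Q^{\pm i}Z\supseteq Q^{\pm i}Z$.

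The most substantive step is (4), the equivariance of the operators $Q^i(-)$, $Q^{-i}(-)$ and $\Trans_i(Q,X)(-)$ under $f\in\Op(Q,X)$. I would reduce to the case of a single generator $f=s_p^{\pm1}$ and then induct on word length. For a generator, the action axiom $s_a\circ s_b=s_{a\rhd b}\circ s_a$ rearranges to $s_q\circ s_p=s_p\circ s_{p\rhd^{-1}q}$, so
\[
Q\rhd s_p(Z)=\bigcup_{q\in Q}s_q(s_p(Z))=s_p\Bigl(\bigcup_{q\in Q}s_{p\rhd^{-1}q}(Z)\Bigr)=s_p(Q\rhd Z),
\]
the middle equality using that $q\mapsto p\rhd^{-1}q$ is a bijection of $Q$; the cases $Q\rhd^{-1}$ and $s_p^{-1}$ are symmetric. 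The $\Trans_i$ statement then follows because conjugation by $f$ preserves the number of positive and negative factors of a word, so $f\circ\Trans_i(Q,X)\circ f^{-1}=\Trans_i(Q,X)$ refines the normality of $\Trans(Q,X)$ in $\Op(Q,X)$, whence $\Trans_i(Q,X)f(Z)=f(\Trans_i(Q,X)Z)$. The main thing to get right here is to track the relabeling bijection on the index $q$ and keep the argument purely combinatorial, rather than assuming the $Q$-action factors through $\Inn(Q)$, which it need not by Remark \ref{rem_action}(3). Finally, for (5) I would use that each $s_q$ is a bijection of $X$: the hypothesis gives $s_q(Z)\subseteq Z$ and $s_q^{-1}(Z)\subseteq Z$, and applying $s_q$ to the latter yields $Z=s_q(s_q^{-1}(Z))\subseteq s_q(Z)\subseteq Z$, forcing $q\rhd Z=s_q(Z)=Z$; the equality $q\rhd^{-1}Z=Z$ is the same argument with the roles of $s_q$ and $s_q^{-1}$ interchanged.
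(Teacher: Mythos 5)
Your proposal is correct and follows essentially the same route as the paper: (2) by inserting a cancelling pair $s_c^{-1}\circ s_c$, (3) from the idempotency $q\rhd q=q$, (4) from the relabeling identity $f(q\rhd x)=f_Q(q)\rhd f(x)$ with $f_Q$ a bijection of $Q$, and (5) by applying $s_q^{\pm1}$ to the two inclusions. The only cosmetic difference is in (4), where you induct generator by generator with the explicit bijection $q\mapsto p\rhd^{-1}q$, while the paper conjugates the whole word at once via a lift to $\As(Q)$; both correctly avoid assuming the action factors through $\Inn(Q)$.
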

\begin{proof}
(1) is obvious from the definitions. 

(2)
Take an element $q$ of $Q$, and then 
\[
\Trans_{i+1}(Q, X)\supseteq s_q s_q^{-1} \Trans_i(Q, X)=\Trans_i(Q, X)
\]
holds 
by Lemma \ref{lem_trans}. 

(3)
For any $q\in Q^iZ$, we have $q=q\rhd q\in q\rhd Q^iZ\subseteq Q^{i+1}Z$, 
hence $Q^iZ\subseteq Q^{i+1}Z$. 
The assertions about $Q^{-i}Z$ and $Q^{\pm i}Z$ 
can be proven in the same way. 

(4)
Choose a lift of $f$ in $\As(Q)$ 
and let $f_Q$ denote its image in $\Inn(Q)$. 
It follows from the axiom $q\rhd(r\rhd x)=(q\rhd r)\rhd(q\rhd x)$ 
that $f(q\rhd x)=f_Q(q)\rhd f(x)$ and $f(q\rhd^{-1} x)=f_Q(q)\rhd^{-1} f(x)$. 
Since $f_Q$ is an automorphism of $Q$, 
we have the assertion. 

(5)
We have 
$q\rhd Z\subseteq Z$ and 
$q\rhd^{-1} Z\subseteq Z$. 
Applying $s_q^{-1}$ to the former 
and $s_q$ to the latter, 
we obtain the assertion. 
\end{proof}

In this paper, 
we are mainly interested in the situation where the automorphism group is big enough. 

\begin{definition}
A quandle $Q$ is called homogeneous 
if the action of $\Autq(Q)$ on $Q$ is transitive. 

A quandle $Q$ is called $\rhd$-connected 
if the action of $\Inn(Q)$ on $Q$ is transitive. 
By Proposition \ref{prop_inn_trans_in_q}, 
this is equivalent to saying that 
the action of $\Trans(Q)$ on $Q$ is transitive. 
\end{definition}

For a discrete quandle $Q$, 
the homogeneity of $Q$ implies that 
the quandle can be described in terms of a group automorphism 
as in Example \ref{ex_phi_set}. 

\begin{proposition}
\label{prop_homogeneous_quandle}
Let $Q$ be a quandle 
and let $G$ be $\Autq(Q)$, $\Inn(Q)$ or $\Trans(Q)$. 

(1)
For any element $q$ of $Q$, 
let $\varphi_q$ denote the map 
$G\to G; g\mapsto s_q g s_q^{-1}$. 
Then the natural map 
\[
\pi_q: G/G_q\to Gq; \quad gG_q\mapsto gq 
\]
is an isomorphism of quandles 
from $(G/G_q, \rhd_{\varphi_q})$ to $Gq$. 

In particular, 
\begin{itemize}
\item
If $Q$ is homogeneous, 
then it is isomorphic to 
$(\Autq(Q)/\Autq(Q)_q, \rhd_{\varphi_q})$. 
\item
If $Q$ is $\rhd$-connected, 
then it is isomorphic to $(\Inn(Q)/\Inn(Q)_q, \rhd_{\varphi_q})$ 
and $(\Trans(Q)/\Trans(Q)_q, \rhd_{\varphi_q})$. 
\end{itemize}

(2)
For any two elements $q$ and $r$ of $Q$, 
let $\psi_{q, r}$ denote the map $G\to G; g\mapsto s_q g s_r^{-1}$. 
Then there is a well-defined action 
\[
\rhd_{\psi_{q, r}}: G/G_q\times G/G_r\to G/G_r; 
\quad (gG_q, hG_r)\mapsto g\psi_{q, r}(g^{-1}h)G_r 
\]
compatible with the action of $Gq$ on $Gr$. 
\end{proposition}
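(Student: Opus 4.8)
The plan is to prove (1) first and then obtain (2) by recycling the same computation. Throughout, the essential tool is Lemma \ref{lem_trans}(1) in the form $f\circ s_q=s_{f(q)}\circ f$ for $f\in\Autq(Q)$, which governs how conjugation interacts with the maps $s_q$. I would open by checking that the pair $(\varphi_q, G_q)$ genuinely defines a $\varphi$-space quandle in the sense of Example \ref{ex_phi_set}. Since $s_q\in\Inn(Q)\subseteq\Autq(Q)$ and both $\Inn(Q)$ and $\Trans(Q)$ are normal in $\Autq(Q)$, conjugation by $s_q$ preserves each of the three candidate groups $G$, so $\varphi_q$ is an automorphism of $G$. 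Next I would show $G_q\subseteq G^{\varphi_q}$: if $g\in G$ fixes $q$, then Lemma \ref{lem_trans}(1) gives $g\circ s_q=s_{gq}\circ g=s_q\circ g$, so $g$ commutes with $s_q$, i.e. $\varphi_q(g)=g$. Hence Example \ref{ex_phi_set} equips $G/G_q$ with the operation $\rhd_{\varphi_q}$.

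I would then verify that $\pi_q$ is an isomorphism. Bijectivity onto the orbit $Gq$ is the usual orbit--stabilizer statement and is immediate once $Gq$ is seen to be a subquandle; for $G=\Trans(Q)$ one invokes Proposition \ref{prop_inn_trans_in_q} so that $\Trans(Q)q=\Inn(Q)q$ and the operation $a\rhd b=s_a(b)$ stays inside the orbit. The homomorphism property is then a direct computation: the left-hand side is $\pi_q(g\varphi_q(g^{-1}h)G_q)=g s_q g^{-1}h s_q^{-1}q=g s_q g^{-1}hq$ using $s_q^{-1}q=q$, while the right-hand side is $(gq)\rhd(hq)=s_{gq}(hq)=g s_q g^{-1}hq$ using $s_{gq}=g s_q g^{-1}$ from Lemma \ref{lem_trans}(1). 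Since a bijective quandle homomorphism is an isomorphism, $\pi_q$ is one, and the homogeneous and $\rhd$-connected specializations follow by transitivity of the respective group actions.

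For (2) I would first read off the intended ``action of $Gq$ on $Gr$'' as the restriction of the natural self-action of $Q$, namely $a\rhd b=s_a(b)$ for $a\in Gq$ and $b\in Gr$; this lands in $Gr$ by the same orbit-coincidence argument as above. The crux is well-definedness of $\rhd_{\psi_{q,r}}$, where the containments $G_q\subseteq G^{\varphi_q}$ and $G_r\subseteq G^{\varphi_r}$ are genuinely used: replacing $g$ by $gj$ with $j\in G_q$ turns $g s_q g^{-1}$ into $gj s_q j^{-1}g^{-1}=g s_q g^{-1}$ because $j$ commutes with $s_q$, and replacing $h$ by $hk$ with $k\in G_r$ leaves the coset fixed because $s_r k s_r^{-1}=k\in G_r$. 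Compatibility is then the same one-line check as in (1): $\pi_r(gG_q\rhd_{\psi_{q,r}}hG_r)=g s_q g^{-1}h s_r^{-1}r=g s_q g^{-1}hr$ equals $(gq)\rhd(hr)=s_{gq}(hr)=g s_q g^{-1}hr$. As $\pi_q,\pi_r$ are bijections making the square commute, transporting the genuine quandle action of $Gq$ on $Gr$ shows that $\rhd_{\psi_{q,r}}$ is a well-defined action.

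The main obstacle, such as it is, lies in the case $G=\Trans(Q)$, where $s_q$ itself is not an element of $G$. One must keep separate the two roles of $s_q$: as an element of $\Inn(Q)$ it is used to form $\varphi_q$ and to apply Lemma \ref{lem_trans}(1) as an identity of maps $Q\to Q$, whereas the requirement that $\varphi_q$, the conjugate $s_q g s_q^{-1}$, and the products $g\psi_{q,r}(g^{-1}h)$ stay inside $\Trans(Q)$ is supplied by normality of $\Trans(Q)$ in $\Autq(Q)$ together with Lemma \ref{lem_trans}(2) (the parity count of $s$'s and $s^{-1}$'s balances out). Once this distinction is maintained, every identity above reduces to elementary manipulation.
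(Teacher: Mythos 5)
Your proposal is correct and follows essentially the same route as the paper: stability of $G$ (in particular $\Trans(Q)$, via the balanced-word count) under $g\mapsto s_q g s_r^{-1}$, the observation that elements of $G_q$ commute with $s_q$ (i.e.\ $G_q\subseteq G^{\varphi_q}$), and the same direct computations for well-definedness and for compatibility with $\pi_q,\pi_r$ using $f\circ s_q=s_{f(q)}\circ f$. The only cosmetic difference is that you verify the homomorphism property of $\pi_q$ directly in (1), whereas the paper deduces it as the special case $q=r$ of the computation in (2).
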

\begin{proof}
This is essentially Theorems 7.1 and 7.2 of \cite{Joyce1982}. 

First of all, note that $G$ is stable under the map $g\mapsto s_q g s_r^{-1}$. 
This is obvious if $G$ is $\Autq(Q)$ or $\Inn(Q)$. 
If $G$ is $\Trans(Q)$, 
then its element $g$ can be written as 
$s_{a_1}\cdots s_{a_k}s_{b_1}^{-1}\cdots s_{b_k}^{-1}$, 
and so is $s_q g s_r^{-1}$. 

(1)
The natural map $G/G_q\to Gq$ is bijective, 
and the assertion that it is a homomorphism is a special case of (2), 
which we prove below. 
For the second assertion, 
note that $\Trans(Q)q$ is equal to $\Inn(Q)q$ 
by Proposition \ref{prop_inn_trans_in_q}. 

(2)
From $gs_q=s_{g(q)}g$, 
we see that any element of $G_q$ commutes with $s_q$. 
Similarly, any element of $G_r$ commutes with $s_r^{-1}$. 
Thus, if $g'=gg_1$ and $h'=hh_1$ with $g_1\in G_q$ and $h_1\in G_r$, 
we have 
\begin{eqnarray*}
g'\psi_{q, r}((g')^{-1}h')G_r & = & 
gg_1s_qg_1^{-1}g^{-1}hh_1s_r^{-1}G_r \\
& = & gs_qg_1g_1^{-1}g^{-1}hs_r^{-1}h_1G_r \\
& = & gs_qg^{-1}hs_r^{-1}G_r \\
& = & g\psi_{q, r}(g^{-1}h)G_r, 
\end{eqnarray*}
so the action is well-defined. 

We have 
\begin{eqnarray*}
\pi_r(gG_q\rhd_{\psi_{q, r}} hG_r) 
& = & g\psi_{q, r}(g^{-1}h)(r) \\
& = & gs_qg^{-1}hs_r^{-1}(r) \\
& = & s_{g(q)}gg^{-1}h(r) = s_{g(q)}h(r) = g(q)\rhd h(r) \\
& = & \pi_q(gG_q)\rhd \pi_r(hG_r), 
\end{eqnarray*}
so the actions are compatible. 
\end{proof}

\begin{remark}\label{rem_vedernikov_hom}
Let $G$ be $\Autq(Q)$, $\Inn(Q)$ or $\Trans(Q)$. 
For any $q\in Q$, 
the map 
\[
\bar{s}: Q\to (G, \rhd'_{\varphi_q}); x\mapsto s_x s_q^{-1}
\]
is easily seen to be a quandle homomorphism. 

If $Q$ is homogeneous with $G=\Autq(Q)$ or 
$Q$ is $\rhd$-connected with $G=\Inn(Q)$ or $\Trans(Q)$, 
this map is the same as the composite map 
\[
Q\cong (G/G_q, \rhd_{\varphi_q})\twoheadrightarrow (G/G^{\varphi_q}, \rhd_{\varphi_q})
\hookrightarrow (G, \rhd'_{\varphi_q}), 
\]
where the last map is the injective homomorphism 
in Example \ref{ex_vedernikov}. 
\end{remark}

\section{Quandle varieties and orbits of actions}

In this section, we work over an algebraically closed field 
of an arbitrary characteristic.  

\begin{definition}
A quandle variety, or an algebraic quandle, 
is an algebraic (reduced and irreducible) variety $Q$ 
equipped with a quandle operation $\rhd$ 
such that $Q\times Q\to Q\times Q; (q, r)\mapsto (q, q\rhd r)$ 
is an automorphism of a variety. 

An action of $Q$ on an algebraic variety $X$ 
is called an algebraic action 
if $Q\times X\to Q\times X; (q, x)\mapsto (q, q\rhd x)$ 
is an automorphism of a variety. 
A variety with an action of $Q$ will be called 
a $Q$-variety. 
\end{definition}
In the sequel, 
actions will be always algebraic 
when we are dealing with quandle varieties and algebraic varieties. 

\begin{example}
(1)
In Example \ref{ex_ext_of_trivial}, 
if $X$ is an algebraic variety, 
$A$ is a connected commutative algebraic group 
and $F$ is a regular map, 
then $X\times A$ is a quandle variety. 

(2)
If $G$ is a connected algebraic group 
in Example \ref{ex_conj}, 
then the variety $G$ with the conjugation operation is a quandle variety. 

(3)
In Example \ref{ex_phi_set}, 
we have a quandle variety $(G/H, \rhd_\varphi)$ if $G$ is a connected algebraic group, 
$\varphi$ is an algebraic automorphism and $H$ is a closed subgroup. 

(4)
In Example \ref{ex_vedernikov}, 
we have a quandle variety $(G, \rhd'_\varphi)$ 
if $G$ is a connected algebraic group 
and $\varphi$ is an algebraic automorphism. 
The quandle $(G/G^\varphi, \rhd_\varphi)$ 
from the previous example 
can be embedded into this quandle. 
\end{example}

\begin{definition}
Let $G$ be an algebraic group, $\varphi$ an automorphism of $G$, 
and $H$ a subgroup of $G$ satisfying $H\subseteq G^\varphi$. 
Then $(G/H, \rhd_\varphi)$, or the triple $(G, H, \varphi)$, 
is called a weak algebraic $\varphi$-space. 

If $H$ contains the connected component $(G^\varphi)^\circ$ 
of $G^\varphi$ containing the identity element, 
then $G/H$ is called an algebraic $\varphi$-space. 

We say that $G/H$ is a regular algebraic $\varphi$-space  
if $T_{\bar{e}}\bar{\varphi}-1$ is invertible, 
where $\bar{e}$ is the residue class of the identity element 
and $\bar{\varphi}: G/H\to G/H$ is the induced automorphism. 
\end{definition}

The group $\Trans(Q)$ plays an important role 
in the theory of generalized symmetric spaces. 
A key fact is that it is a connected Lie group. 
Now we will show a partial analogue of this fact for quandle varieties. 
First, we prove that orbits of $\Trans(Q)$-actions 
enjoy a number of properties 
similar to those satisfied by orbits of algebraic group actions. 

Let $Q$ be a quandle variety acting on a variety $X$. 
Recall from Proposition \ref{prop_orbit}(2) 
that $\{\Trans_i(Q, X)Z\}$ is an increasing sequence. 

\begin{lemma}
(1) 
Let $Z$ be an irreducible subset of $X$. 
If $i_0$ is a natural number such that 
$\dim \Trans_{i_0}(Q, X)Z=\dim \Trans_{i_0+1}(Q, X)Z$, 
then $\overline{\Trans_i(Q, X)Z}$ is equal to $\overline{\Trans(Q, X)Z}$ 
for $i\geq i_0$. 

In particular, 
$\overline{\Trans_i(Q, X)Z}$ is equal 
to $\overline{\Trans(Q, X)Z}$ for $i\geq \dim X-\dim Z$. 

(2)
Let $Z$ be a constructible subset of $X$. 
Then $\overline{\Trans_i(Q, X)Z}$ is equal 
to $\overline{\Trans(Q, X)Z}$ for $i\geq \dim X$. 
\end{lemma}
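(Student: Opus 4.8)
The plan is to run the standard ``orbit closure stabilizes in bounded dimension'' argument, with the transvection structure of the quandle supplying the one nonformal step. Throughout write $W_i:=\overline{\Trans_i(Q,X)Z}$. Since $\Trans_i(Q,X)Z$ is the image of the morphism $Q^{i}\times Q^{i}\times Z\to X$ sending $(a_1,\dots,a_i,b_1,\dots,b_i,z)$ to $s_{b_1}^{-1}\circ\cdots\circ s_{b_i}^{-1}\circ s_{a_i}\circ\cdots\circ s_{a_1}(z)$ (a composite of the regular maps furnished by the algebraic action), each $W_i$ is irreducible and closed; and by Proposition \ref{prop_orbit}(2) the $W_i$ form an increasing chain $W_0\subseteq W_1\subseteq\cdots\subseteq X$, so $\dim W_i$ is nondecreasing and bounded by $\dim X$.

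The crucial input is a one-step factorization of transvections. By Lemma \ref{lem_trans}(2) I may reorder so as to peel a single pair off on the left, obtaining $\Trans_{i+1}(Q,X)Z=\bigcup_{a,b\in Q}\mu_{a,b}\bigl(\Trans_i(Q,X)Z\bigr)$, where $\mu_{a,b}:=s_a\circ s_b^{-1}\colon X\to X$ is a morphism. Now suppose $\dim W_{i_0}=\dim W_{i_0+1}$. Since $W_{i_0}\subseteq W_{i_0+1}$ are irreducible closed sets of the same finite dimension, they are equal. For each $a,b$, continuity of $\mu_{a,b}$ gives $\mu_{a,b}(W_{i_0})=\mu_{a,b}\bigl(\overline{\Trans_{i_0}(Q,X)Z}\bigr)\subseteq\overline{\mu_{a,b}(\Trans_{i_0}(Q,X)Z)}\subseteq W_{i_0+1}=W_{i_0}$, so $W_{i_0}$ is stable under every $\mu_{a,b}$. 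Feeding this back into the factorization, $\Trans_{i_0+2}(Q,X)Z=\bigcup_{a,b}\mu_{a,b}\bigl(\Trans_{i_0+1}(Q,X)Z\bigr)\subseteq\bigcup_{a,b}\mu_{a,b}(W_{i_0})\subseteq W_{i_0}$, whence $W_{i_0+2}=W_{i_0}$; iterating gives $W_i=W_{i_0}$ for all $i\ge i_0$, and since then $\bigcup_i W_i=W_{i_0}$ is already closed, this common value is $\overline{\Trans(Q,X)Z}$. This propagation step, which rules out a plateau in dimension followed by a later jump, is the heart of the matter and the only place the transvection factorization (hence Lemma \ref{lem_trans}, and thereby the quandle axioms) is used; everything else is formal.

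For the numerical bound in (1), let $i_0$ be the least index with $\dim W_{i_0}=\dim W_{i_0+1}$, which exists since the bounded integer sequence $\dim W_i$ is eventually constant. As $W_0=\overline Z$ has dimension $\dim Z$ and each of the $i_0$ preceding steps strictly raises the dimension, $\dim W_{i_0}\ge\dim Z+i_0$; comparison with $\dim W_{i_0}\le\dim X$ forces $i_0\le\dim X-\dim Z$, and the main part then gives $W_i=\overline{\Trans(Q,X)Z}$ for all $i\ge\dim X-\dim Z$. For (2), write a constructible $Z$ as a finite union $Z=\bigcup_{j=1}^m Z_j$ of irreducible (locally closed) subsets, so that $\overline{\Trans_i(Q,X)Z}=\bigcup_j\overline{\Trans_i(Q,X)Z_j}$ because closure commutes with finite unions. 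Applying (1) to each $Z_j$, for which $\dim X-\dim Z_j\le\dim X$, yields $\overline{\Trans_i(Q,X)Z_j}=\overline{\Trans(Q,X)Z_j}$ once $i\ge\dim X$; taking the union over $j$ proves (2). I expect the propagation step of the second paragraph to be the only real obstacle, the remaining arguments being the familiar Noetherian and dimension bookkeeping.
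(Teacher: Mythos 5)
Your proposal is correct and follows essentially the same route as the paper: the dimension plateau plus irreducibility forces $\overline{\Trans_{i_0}(Q,X)Z}=\overline{\Trans_{i_0+1}(Q,X)Z}$, and stability of this closed set under one more transvection pair (via the reordering in Lemma \ref{lem_trans}(2)) shows it already contains $\Trans(Q,X)Z$; the numerical bounds and the reduction of (2) to irreducible components are the same bookkeeping the paper leaves implicit. The only cosmetic difference is that you iterate the stability step explicitly rather than invoking stability under the whole group $\Trans(Q,X)$ at once.
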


\begin{proof}
(1)
Since $\{\overline{\Trans_i(Q, X)Z}\}$ is an 
increasing sequence of irreducible closed sets, 
the equality of dimensions implies 
$\overline{\Trans_{i_0}(Q, X)Z}=\overline{\Trans_{i_0+1}(Q, X)Z}$. 
Let $W$ denote this set. 
It is obviously contained in $\overline{\Trans(Q, X)Z}$. 

Since 
\[
s_q^{-1}s_rW\subseteq \overline{s_q^{-1}s_r\Trans_{i_0}(Q, X)Z}
\subseteq \overline{\Trans_{i_0+1}(Q, X)Z}
= W
\]
holds for any $q, r\in Q$, 
the set $W$ is stable under the action of $\Trans(Q, X)$, 
hence contains $\Trans(Q, X)Z$. 
Since it is closed, it contains $\overline{\Trans(Q, X)Z}$. 

(2) 
This is an immediate consequence of (1). 
\end{proof}

\begin{proposition}\label{prop_trans_orbit_in_x}
Let $z_0$ be a point of $X$. 
Then $\Trans(Q, X)z_0$ is a locally closed subvariety of $X$. 

If $i_0$ is a natural number such that 
$\dim \Trans_{i_0}(Q, X)z_0=\dim \Trans_{i_0+1}(Q, X)z_0$, 
then $\Trans_i(Q, X)z_0=\Trans(Q, X)z_0$ holds for $i\geq 2i_0$. 

In particular, $\Trans_i(Q, X)z_0=\Trans(Q, X)z_0$ holds for $i\geq 2\dim X$. 
\end{proposition}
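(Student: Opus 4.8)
The plan is to treat $O:=\Trans(Q,X)z_0$ exactly as one treats the orbit of an algebraic group action, the substitute for the group being the abstract group $\Op(Q,X)$ together with its distinguished subsets $\Trans_i(Q,X)$. First I would record that each $\Trans_k(Q,X)z_0$ is the image of the morphism
\[
Q^{2k}\to X;\quad (a_1,\dots,a_k,b_1,\dots,b_k)\mapsto s_{b_1}^{-1}\cdots s_{b_k}^{-1}s_{a_k}\cdots s_{a_1}(z_0),
\]
which is indeed a morphism because the action is algebraic. Hence $\Trans_k(Q,X)z_0$ is constructible by Chevalley's theorem and irreducible, being the image of the irreducible variety $Q^{2k}$. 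Writing $W:=\overline{O}$, the lemma preceding this proposition gives $W=\overline{\Trans_{i_0}(Q,X)z_0}$ whenever $i_0\ge\dim X$ (and, under the dimension hypothesis of the second assertion, already for the given $i_0$); in particular $W$ is irreducible. Since a dense constructible subset of an irreducible variety contains a dense open subset, $\Trans_{i_0}(Q,X)z_0$ contains a dense open subset $U$ of $W$, and $U\subseteq O$.

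For assertion (1) I would argue as for group orbits. Every element of $\Trans(Q,X)$ is an automorphism of $X$ preserving the orbit $O$, hence preserving $W=\overline O$, so each $gU$ (for $g\in\Trans(Q,X)$) is open in $W$ and contained in $O$. Fixing a point $u_0=g_0z_0\in U$ and writing an arbitrary $x\in O$ as $x=hz_0$, we obtain $x=(hg_0^{-1})u_0\in (hg_0^{-1})U$; thus $O=\bigcup_{g}gU$ is open in $W$, i.e. $O$ is locally closed.

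The crux of assertion (2) is a normality property of the sets $\Trans_i(Q,X)$. From the action axiom $q\rhd(r\rhd x)=(q\rhd r)\rhd(q\rhd x)$ one reads off $s_qs_rs_q^{-1}=s_{q\rhd r}$ and $s_q^{-1}s_rs_q=s_{q\rhd^{-1}r}$ as self-maps of $X$; consequently conjugation by any $g\in\Op(Q,X)$ carries each $s_r^{\pm1}$ to $s_{g_Q(r)}^{\pm1}$, where $g_Q\in\Inn(Q)$ is the corresponding word acting on $Q$ itself (as in the proof of Proposition \ref{prop_orbit}(4)). Since such a conjugation preserves the number of positive and of negative factors, it follows that $g\Trans_{i_0}(Q,X)g^{-1}=\Trans_{i_0}(Q,X)$ for every $g$. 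I would also record, from the action analogue of Lemma \ref{lem_trans}(2), that $\Trans_i(Q,X)\Trans_j(Q,X)\subseteq\Trans_{i+j}(Q,X)$ and that $\Trans_{i_0}(Q,X)^{-1}=\Trans_{i_0}(Q,X)$.

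Finally I would combine these. Granting the dimension hypothesis, fix $U\subseteq\Trans_{i_0}(Q,X)z_0$ dense open in $W$. For $x\in O$ write $x=gz_0$ with $g\in\Trans(Q,X)$; by conjugation-invariance $\Trans_{i_0}(Q,X)x=\Trans_{i_0}(Q,X)gz_0=g\,\Trans_{i_0}(Q,X)z_0\supseteq gU$, a dense open subset of the irreducible $W$. Two dense open subsets of $W$ meet, so I choose $y\in U\cap gU\subseteq\Trans_{i_0}(Q,X)z_0\cap\Trans_{i_0}(Q,X)x$ and write $y=h_1z_0=h_2x$ with $h_1,h_2\in\Trans_{i_0}(Q,X)$; then $x=h_2^{-1}h_1z_0\in\Trans_{i_0}(Q,X)\Trans_{i_0}(Q,X)z_0\subseteq\Trans_{2i_0}(Q,X)z_0$, using inverse-closedness and additivity of the indices. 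Hence $O\subseteq\Trans_{2i_0}(Q,X)z_0\subseteq O$, forcing equality, and the same holds for all $i\ge 2i_0$. The closing assertion follows because the non-decreasing sequence $\dim\Trans_i(Q,X)z_0$ starts at $0$ and is bounded by $\dim X$, so the required equality $\dim\Trans_{i_0}(Q,X)z_0=\dim\Trans_{i_0+1}(Q,X)z_0$ occurs for some $i_0\le\dim X$. The main obstacle is the conjugation-invariance step together with its use to transport the dense open subset $U$ from $z_0$ to an arbitrary orbit point $x$; everything else mirrors the standard locally-closed-orbit argument for algebraic groups.
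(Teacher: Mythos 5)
Your proof is correct and follows essentially the same route as the paper: the preceding lemma plus Chevalley's theorem produce a dense open subset of $\overline{\Trans(Q,X)z_0}$ inside $\Trans_{i_0}(Q,X)z_0$, the standard group-orbit argument gives local closedness, and the identity $\Trans_{i_0}(Q,X)f(Z)=f(\Trans_{i_0}(Q,X)Z)$ (the paper's Proposition \ref{prop_orbit}(4), which you re-derive as conjugation-invariance) lets you intersect two dense translates to land in $\Trans_{2i_0}(Q,X)z_0$. Your write-up merely makes explicit a few steps the paper leaves implicit, such as $\Trans_{i_0}(Q,X)^{-1}\Trans_{i_0}(Q,X)\subseteq\Trans_{2i_0}(Q,X)$.
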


\begin{proof}
Let $W=\overline{\Trans(Q, X)z_0}$. 
For a subset $V$ of $W$, let $V^\circ$ denote the relative interior of $V$ in $W$. 

By the previous lemma and Chevalley's theorem on the image of a morphism, 
$\Trans_{i_0}(Q, X)z_0$ contains a nonempty open subset of $W$. 
In general, if a group acts on a topological space by homeomorphisms 
and an orbit contains a nonempty open subset, 
then it is easily seen to be open. 
Thus $\Trans(Q, X)z_0$ is open in $W$, 
hence locally closed in $X$. 

Let $z_1$ be an element of $\Trans(Q, X)z_0$ 
and write $z_1$ as $f(z_0)$ with $f\in\Trans(Q, X)$. 
By Proposition \ref{prop_orbit} (4), 
$\Trans_{i_0}(Q, X)z_1$ is equal to $f(\Trans_{i_0}(Q, X)z_0)$, 
which is dense in $W$. 
It follows that 
$\Trans_{i_0}(Q, X)z_0$ and $\Trans_{i_0}(Q, X)z_1$ have nonempty intersection 
and therefore that $z_1$ is contained in $\Trans_{2i_0}(Q, X)z_0$. 
\end{proof}

Thus, 
we see that $X$ is divided into locally closed $\Trans(Q, X)$-orbits.

\medskip
Recall that an open subgroup of a connected algebraic group 
is the whole group. 
As an analogy, for an open subquandle $U$ of $Q$, 
it is hoped that $\Inn(U)\cong\Inn(Q)$ and $\Trans(U)\cong\Trans(Q)$. 
The main theorem asserts that this is true if $Q$ has an open orbit. 
In general, the following holds. 

\begin{proposition}\label{prop_inn_trans_opensub}
Let $Q$ be an arbitrary quandle variety 
and $U$ an open subquandle. 
Then 
there are natural injective homomorphisms 
$\Inn(U)\to\Inn(Q)$ and $\Trans(U)\to\Trans(Q)$. 
\end{proposition}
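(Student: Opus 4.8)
The plan is to define both maps on generators by sending $s_a$ (for $a$ in the source quandle) to the like-named $s_a$ in the target, and then to reduce both well-definedness and injectivity to a single geometric fact about fixed loci. The point is that the only nontrivial content is a density argument; the rest is purely formal manipulation of words.

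First I would record the key geometric input. Since $Q$ is irreducible, the nonempty open subquandle $U$ is dense in $Q$. Moreover, each $s_q$ is an automorphism of the variety $Q$ (by the quandle variety axiom that $(q,r)\mapsto(q,q\rhd r)$ is a variety automorphism), so every $f\in\Inn(Q)$ acts on $Q$ as a variety automorphism. As $Q$ is separated, the fixed locus $\{q\in Q\mid f(q)=q\}$ is the equalizer of $f$ and $\mathrm{id}_Q$, hence closed. Consequently, if some $f\in\Inn(Q)$ fixes every point of the dense set $U$, its fixed locus is all of $Q$, i.e.\ $f=\mathrm{id}_Q$. This is the crux of the argument.

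To construct $\Inn(U)\to\Inn(Q)$ I would work with presentations by words. Let $F_U$ and $F_Q$ be the free groups on symbols indexed by $U$ and by $Q$, with the surjections $\rho_U\colon F_U\to\Inn(U)$ and $\rho_Q\colon F_Q\to\Inn(Q)$ sending the symbol for $a$ to $s_a$, and let $\iota\colon F_U\hookrightarrow F_Q$ be induced by $U\subseteq Q$. Because $U$ is a subquandle, it is stable under $s_a$ and $s_a^{-1}$ for every $a\in U$, and the operation on $U$ is the restriction of that on $Q$; hence for any word $v\in F_U$ the variety automorphism $\rho_Q(\iota(v))$ of $Q$ restricts on $U$ to $\rho_U(v)$. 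If $v\in\ker\rho_U$, then $\rho_Q(\iota(v))$ fixes $U$ pointwise, so by the geometric fact above $\rho_Q(\iota(v))=\mathrm{id}_Q$. Thus $\ker\rho_U\subseteq\ker(\rho_Q\circ\iota)$, and $\rho_Q\circ\iota$ factors through a homomorphism $\Phi\colon\Inn(U)\to\Inn(Q)$.

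Injectivity is then immediate: if $\Phi(\rho_U(v))=\mathrm{id}_Q$, then restricting the identity $\rho_Q(\iota(v))=\mathrm{id}_Q$ to the stable set $U$ yields $\rho_U(v)=\mathrm{id}_U$, so $\Phi$ has trivial kernel. For the transvection statement, I would note that $\Trans(U)$ consists of balanced products of the $s_a$ with $a\in U$, and $\Phi$ carries such a balanced product to the corresponding balanced product in $\Inn(Q)$, which lies in $\Trans(Q)$; hence $\Phi$ restricts to a homomorphism $\Trans(U)\to\Trans(Q)$, injective as the restriction of an injective map. The only real obstacle is isolating the density-plus-closedness fact cleanly; once it is in hand, well-definedness follows by restriction, injectivity is automatic, and the $\Trans$ case is a formal consequence.
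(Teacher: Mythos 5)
Your proposal is correct and follows the same route as the paper's (much terser) proof: both define the map on the generators $s_a$, deduce well-definedness from the density of $U$ in $Q$ (an automorphism of the variety $Q$ fixing the dense subset $U$ pointwise is the identity), and get injectivity by restricting back to $U$. Your version merely makes explicit the free-group bookkeeping and the closedness of the fixed locus that the paper leaves implicit.
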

\begin{proof}
If we write an element of $\Inn(U)$ 
as a product of $s_q^{\pm 1}$'s, 
then the same expression defines an element of $\Inn(Q)$, 
and this gives a well-defined homomorphism $\Inn(U)\to\Inn(Q)$ 
since $U$ is dense in $Q$. 
It is injective since $U\subseteq Q$. 
By restriction, we obtain an injective homomorphism 
$\Trans(U)\to\Trans(Q)$. 
\end{proof}

The following is a kind of approximation of 
$\Inn(U)\cong\Inn(Q)$ and $\Trans(U)\cong\Trans(Q)$. 
\begin{proposition}\label{prop_orbit_opensub}
Let $Q$ be a quandle variety, 
$U$ an open subquandle 
and $X$ a $Q$-variety. 
Then $\Op(U, X)z_0=\Op(Q, X)z_0$ 
and $\Trans(U, X)z_0=\Trans(Q, X)z_0$ hold. 
\end{proposition}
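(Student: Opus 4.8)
The plan is to reduce the whole statement to the single assertion $\Trans(U,X)z_0=\Trans(Q,X)z_0$ and then to settle that by a density argument. The inclusions $\Trans(U,X)z_0\subseteq\Trans(Q,X)z_0$ and $\Op(U,X)z_0\subseteq\Op(Q,X)z_0$ are immediate, since every $s_u$ with $u\in U$ occurs among the $s_q$ with $q\in Q$, so only the reverse inclusions need work. For the passage from $\Op$ to $\Trans$ I would fix a point $q_0\in U$ (possible as $U$ is a nonempty open set). Any word $s_{a_m}^{\sigma_m}\cdots s_{a_1}^{\sigma_1}$ representing an element $g\in\Op(Q,X)$ has a total degree $n=\sum\sigma_i$, and $s_{q_0}^{-n}g$ is then represented by a word of degree $0$; by Lemma \ref{lem_trans}(2) every degree-$0$ word lies in $\Trans(Q,X)$. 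Hence $\Op(Q,X)=\bigcup_{n\in\bZ}s_{q_0}^{\,n}\Trans(Q,X)$, so that
\[
\Op(Q,X)z_0=\bigcup_{n\in\bZ}s_{q_0}^{\,n}(\Trans(Q,X)z_0).
\]
Granting $\Trans(Q,X)z_0=\Trans(U,X)z_0$, each term becomes $s_{q_0}^{\,n}(\Trans(U,X)z_0)\subseteq\Op(U,X)z_0$, because $s_{q_0}\in\Op(U,X)$ and $\Op(U,X)z_0$ is stable under $\Op(U,X)$. This yields $\Op(Q,X)z_0\subseteq\Op(U,X)z_0$ and closes the reduction.

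It remains to treat $\Trans(Q,X)z_0=\Trans(U,X)z_0$. Since $U$ is open in the irreducible $Q$, it is itself a quandle variety acting algebraically on $X$, so Proposition \ref{prop_trans_orbit_in_x} applies to $U$ as well: $\Trans(U,X)y$ and $\Trans(Q,X)y$ are locally closed for every $y\in X$. The first step is to show that these two orbits have the \emph{same closure}, for an arbitrary base point $y$. Writing $A_y=\Trans(U,X)y$ and $W_y=\overline{A_y}$, I would consider the morphism $\mu\colon Q\times Q\times X\to X$ given by $(a,b,x)\mapsto b\rhd^{-1}(a\rhd x)$. For $a,b\in U$ this is the action of $s_b^{-1}s_a\in\Trans(U,X)$, so $\mu(U\times U\times A_y)\subseteq A_y\subseteq W_y$. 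As $U$ is dense in $Q$ and $A_y$ dense in $W_y$, the set $U\times U\times A_y$ is dense in $Q\times Q\times W_y$ (the closure of a product being the product of closures); since $\mu$ is continuous and $W_y$ is closed, this forces $\mu(Q\times Q\times W_y)\subseteq W_y$. Thus $W_y$ is stable under every $s_b^{-1}s_a$ with $a,b\in Q$, and hence under all of $\Trans(Q,X)$, because by Lemma \ref{lem_trans}(2) each element of $\Trans(Q,X)$ is a product of such maps. As $y\in W_y$ we get $\Trans(Q,X)y\subseteq W_y$, whence $\overline{\Trans(Q,X)y}=W_y=\overline{\Trans(U,X)y}$.

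Finally I would upgrade equality of closures to equality of orbits. Fix $z_0$, set $B=\Trans(Q,X)z_0$ and $W=\overline{B}$; here $W$ is irreducible, being $\overline{\Trans_i(Q,X)z_0}$ for $i\gg 0$ as in the proof of Proposition \ref{prop_trans_orbit_in_x}. The subgroup $\Trans(U,X)\subseteq\Aut(X)$ partitions $B$ into orbits. For any $y\in B$ one has $\Trans(Q,X)y=B$, so the previous step gives $\overline{\Trans(U,X)y}=\overline{B}=W$; being locally closed, $\Trans(U,X)y$ is then open and dense in the irreducible space $W$. Two distinct orbits are disjoint, yet two nonempty open subsets of an irreducible space must meet, so $B$ consists of a single $\Trans(U,X)$-orbit, namely $\Trans(U,X)z_0$. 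This proves $\Trans(Q,X)z_0=\Trans(U,X)z_0$ and, together with the reduction above, the proposition.

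The main obstacle is exactly this last upgrade: the density arguments only control Zariski closures, and a dense sub-orbit of an orbit need not fill it. What I expect to make the difference is the observation that the closure-equality of the second step holds for \emph{every} base point in $B$, not merely for $z_0$; this makes \emph{all} $\Trans(U,X)$-orbits inside $B$ dense, and local closedness then turns each of them into a dense open subset of the irreducible $W$, leaving room for only one. The remaining points to verify carefully are the product-closure identity and the factorization of $\Trans_k$ into maps of the form $s_b^{-1}s_a$ used in the second step.
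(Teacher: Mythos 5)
Your proof is correct and follows essentially the same route as the paper: both rest on local closedness of the orbits (Proposition \ref{prop_trans_orbit_in_x} applied to $U$ and to $Q$), density of each $\Trans(U,X)$-orbit inside the $\Trans(Q,X)$-orbit, and the fact that two nonempty open subsets of an irreducible set must meet, followed by the same coset decomposition of $\Op(Q,X)$ over powers of a fixed $s_{q_0}$ with $q_0\in U$. The only cosmetic difference is in how density is obtained: the paper gets it in one line from $\Trans_i(U,X)z_0$ being the image of the dense subset $U^{2i}\subseteq Q^{2i}$, whereas you derive it from stability of $\overline{\Trans(U,X)y}$ under the map $\mu$.
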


\begin{proof}
Write $G$ for $\Trans(Q, X)$ and $H$ for $\Trans(U, X)$. 
Since $U$ is dense in $Q$, 
$\Trans_i(U, X)z_0$ is dense in $\Trans_i(Q, X)z_0$. 
For $i\gg 0$, 
they are equal to $Hz_0$ and $Gz_0$, respectively, 
and are locally closed in $X$ 
by Proposition \ref{prop_trans_orbit_in_x}. 
So $Hz_0$ is open in $Gz_0$. 
If we take a point $z$ of $Gz_0$, 
then $Hz$ is open in $Gz$ by the same arguments, 
and the latter is equal to $Gz_0$. 
Thus $Hz_0$ and $Hz$ intersect, 
and $z$ is contained in $Hz_0$. 

Choose a point $q\in U$, 
then $\Op(Q, X)$ is equal to $\bigcup_{n\in\bZ}\Trans(Q, X)s_q^n$. 
Thus we have $\Op(U, X)z_0=\Op(Q, X)z_0$. 
\end{proof}

\begin{corollary}
A $\rhd$-connected quandle variety $Q$ 
contains no nontrivial open subquandle. 
\end{corollary}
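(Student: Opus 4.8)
The plan is to deduce this as a direct consequence of Proposition \ref{prop_orbit_opensub}, applied to the natural action of $Q$ on itself. Let $U$ be an open subquandle; since a subquandle is by definition nonempty, I would fix a point $q_0 \in U$. Taking $X = Q$ with its natural action (so that $U$ acts on $Q$ by restriction of this action), Proposition \ref{prop_orbit_opensub} immediately yields the equality of orbits $\Op(U, Q)q_0 = \Op(Q, Q)q_0$.

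Next I would identify each side. For the right-hand side, Remark \ref{rem_action}(1) gives $\Op(Q, Q) = \Inn(Q)$, so $\Op(Q, Q)q_0 = \Inn(Q)q_0$; and since $Q$ is $\rhd$-connected, $\Inn(Q)$ acts transitively, whence $\Inn(Q)q_0 = Q$. For the left-hand side, the key point is the containment $\Op(U, Q)q_0 \subseteq U$: the group $\Op(U, Q)$ is generated by the maps $s_u$ with $u \in U$, and because $U$ is closed under both $\rhd$ and $\rhd^{-1}$, each $s_u$ and each $s_u^{-1}$ carries $U$ into $U$. As $q_0 \in U$, an induction on the length of a word in the generators shows that the whole orbit $\Op(U, Q)q_0$ stays inside $U$. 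Combining the two computations gives $Q = \Op(U, Q)q_0 \subseteq U \subseteq Q$, so $U = Q$; that is, $Q$ admits no proper open subquandle.

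I do not anticipate a real obstacle here, since the analytic substance is already carried by Proposition \ref{prop_orbit_opensub}. The only step requiring genuine care is the containment $\Op(U, Q)q_0 \subseteq U$, and this rests entirely on the defining property of a subquandle (stability under $\rhd^{\pm 1}$), which guarantees that the generators of $\Op(U, Q)$ preserve $U$. One should also note, for cleanliness, that the restricted action of $U$ on $Q$ satisfies the action axioms---invertibility of each $s_u$ on $Q$ and the distributivity identity---so that $\Op(U, Q)$ is well defined and Proposition \ref{prop_orbit_opensub} applies verbatim.
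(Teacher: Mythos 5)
Your argument is correct and is essentially the paper's own proof: the paper likewise applies Proposition \ref{prop_orbit_opensub} to the natural action of $Q$ on itself to get $U \supseteq \Inn(U)q = \Inn(Q)q = Q$. You merely make explicit the containment $\Op(U,Q)q_0 \subseteq U$ and the identification $\Op(Q,Q) = \Inn(Q)$, which the paper leaves implicit.
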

\begin{proof}
Let $U$ be an open subquandle. 
For any $q\in U$, 
we have 
$U\supseteq \Inn(U)q=\Inn(Q)q=Q$ by the proposition. 
\end{proof}

\medskip
For the natural $Q$-action on $Q$ itself, 
the $\Inn(Q)$-orbits are equal to the $\Trans(Q)$-orbits 
by Proposition \ref{prop_inn_trans_in_q}. 
They are also equal to the ``forward'' orbits 
in the case of quandle varieties. 

\begin{lemma}\label{lem_closed_orbit_in_q}
(1) 
Let $Z$ be an irreducible subset of $X$. 
If $i_0$ is a natural number such that 
$\dim Q^{i_0}Z=\dim Q^{i_0+1}Z$, 
then $\overline{Q^iZ}$ is equal 
to $\overline{\Inn(Q)Z}$ 
for $i\geq i_0$. 

In particular, 
$\overline{Q^iZ}$ is equal 
to $\overline{\Inn(Q)Z}$ for $i\geq \dim X-\dim Z$. 

(2)
Let $Z$ be a constructible subset of $X$. 
Then $\overline{Q^iZ}$ is equal 
to $\overline{\Inn(Q)Z}$ for $i\geq \dim X$. 
\end{lemma}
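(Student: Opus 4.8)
The plan is to imitate the proof of the analogous statement for $\Trans_i(Q,X)Z$ given just above, working throughout with the natural action on $X=Q$ so that $\Op(Q,Q)=\Inn(Q)$ (Remark \ref{rem_action}(1)) and so that the forward orbits increase, $Q^iZ\subseteq Q^{i+1}Z$ (Proposition \ref{prop_orbit}(3)). I would first reduce (2) to (1) and then treat (1) by a single-step dimension-stabilization argument. For (1), note that each $\overline{Q^iZ}$ is irreducible: $Q^{i+1}Z$ is the image of the irreducible variety $Q\times Q^iZ$ under the operation morphism $\mu:Q\times Q\to Q$, $(q,r)\mapsto q\rhd r$, so by induction starting from the irreducible $Z$ each $\overline{Q^iZ}$ is irreducible. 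Hence $\{\overline{Q^iZ}\}_{i\ge 0}$ is an increasing chain of irreducible closed sets, and the hypothesis $\dim Q^{i_0}Z=\dim Q^{i_0+1}Z$ forces $\overline{Q^{i_0}Z}=\overline{Q^{i_0+1}Z}$; call this set $W$. It remains to identify $W$ with $\overline{\Inn(Q)Z}$.

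The heart of the argument is to show that $W$ is $\Inn(Q)$-stable. First I would establish forward stability, $s_q(W)\subseteq W$ for every $q\in Q$. Since $W$ is closed we have $\overline{Q\times Q^{i_0}Z}=Q\times W$, and continuity of $\mu$ gives $Q\rhd W=\mu(Q\times W)\subseteq\overline{\mu(Q\times Q^{i_0}Z)}=\overline{Q^{i_0+1}Z}=W$. Now comes the step I expect to be the main obstacle: upgrading this to genuine invariance, since the inclusion alone does not control $\rhd^{-1}$. Because $Q$ is a quandle variety, each $s_q$ is an automorphism of the variety $Q$, so $s_q(W)$ is an irreducible closed subset of $W$ with $\dim s_q(W)=\dim W$; therefore $s_q(W)=W$, and consequently $s_q^{-1}(W)=W$ as well, so that $W$ is stable under both $\rhd$ and $\rhd^{-1}$ (in line with Proposition \ref{prop_orbit}(5)). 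Hence $W$ is invariant under the group $\Inn(Q)$ generated by the $s_q$. Since $Z=Q^0Z\subseteq Q^{i_0}Z\subseteq W$, we get $\Inn(Q)Z\subseteq W$ and, $W$ being closed, $\overline{\Inn(Q)Z}\subseteq W$. The reverse inclusion is immediate from $Q^iZ\subseteq\Inn(Q)Z$, so for all $i\ge i_0$ we conclude $\overline{Q^iZ}=W=\overline{\Inn(Q)Z}$.

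For the ``in particular'' clause of (1), the dimensions $\dim Q^iZ$ form a non-decreasing sequence bounded by $\dim X$ and equal to $\dim Z$ at $i=0$; as long as no stabilization has occurred the dimension strictly increases, so the first index $i_0$ with $\dim Q^{i_0}Z=\dim Q^{i_0+1}Z$ satisfies $i_0\le\dim X-\dim Z$, whence the stated bound. Finally, (2) reduces to (1): a nonempty constructible $Z$ is a finite union of irreducible locally closed sets $Z_1,\dots,Z_m$, and since $Q\rhd(\bigcup_j A_j)=\bigcup_j(Q\rhd A_j)$ one gets $\overline{Q^iZ}=\bigcup_j\overline{Q^iZ_j}$ and $\overline{\Inn(Q)Z}=\bigcup_j\overline{\Inn(Q)Z_j}$; applying (1) to each $Z_j$, for which $\dim X-\dim Z_j\le\dim X$, yields the desired equality for all $i\ge\dim X$.
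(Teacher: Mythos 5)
Your proof is correct and follows essentially the same route as the paper's: stabilize the increasing chain of irreducible closed sets $\overline{Q^iZ}$ at an irreducible closed $W$, get $q\rhd W\subseteq W$ by continuity, upgrade this to $q\rhd W=W$ (hence $q\rhd^{-1}W=W$) by comparing dimensions of irreducible closed sets, and conclude that $W$ is $\Inn(Q)$-stable and therefore contains $\overline{\Inn(Q)Z}$. The only differences are presentational: you make explicit the irreducibility of each $\overline{Q^iZ}$, the bound $i_0\le\dim X-\dim Z$, and the reduction of (2) to (1) via a decomposition into irreducible pieces, all of which the paper treats as immediate.
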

By symmetry of $\rhd$ and $\rhd^{-1}$, 
similar statements hold for $\overline{Q^{-i}Z}$. 

\begin{proof}
(1)
Using Proposition \ref{prop_orbit} (3) this time, 
the equality of dimensions implies 
that $\overline{Q^iZ}$ stabilizes to an irreducible closed set $W$ for $i\geq i_0$. 
Obviously we have $W\subseteq \overline{\Inn(Q)Z}$. 

For any $q\in W$, 
we have 
\[
q\rhd W = q\rhd \overline{Q^{i_0}Z} 
\subseteq \overline{q\rhd Q^{i_0}Z} 
\subseteq \overline{Q^{i_0+1}Z} 
= W. 
\]
Since $q\rhd W$ is an irreducible closed set with $\dim (q\rhd W)=\dim W$, 
the equality $q\rhd W=W$ holds. 
Applying $s_q^{-1}$, 
we also have $W=q\rhd^{-1} W$. 
Thus the set $W$ is stable under the action of $Q$, 
hence contains $\Inn(Q)Z$. 
Since it is closed, it contains $\overline{\Inn(Q)Z}$. 

(2) is immediate from (1). 
\end{proof}

\begin{proposition}\label{prop_orbit_in_q}
Let $z_0$ be a point of $Q$. 
Then $\Inn(Q)z_0$ is a locally closed subvariety of $Q$. 

If $i_1$ and $i_2$ are natural numbers for which 
$\dim Q^{i_1}z_0=\dim Q^{i_1+1}z_0$ and \linebreak
$\dim Q^{-i_2}z_0=\dim Q^{-(i_2+1)}z_0$ hold, 
then $Q^iz_0=Q^{-i}z_0=\Inn(Q)z_0$ holds for $i\geq i_1+i_2$. 

In particular, $Q^iz_0=Q^{-i}z_0=\Inn(Q)z_0$ holds for $i\geq 2\dim Q$. 
\end{proposition}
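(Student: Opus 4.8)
The plan is to reduce everything to the two‑sided orbit, whose structure is already under control, and then upgrade to the forward (and backward) orbits. For the first assertion I would simply note that $\Inn(Q)z_0=\Trans(Q)z_0$ by Proposition \ref{prop_inn_trans_in_q}, and that the latter is locally closed by Proposition \ref{prop_trans_orbit_in_x} applied to the natural action of $Q$ on $X=Q$, where $\Trans(Q,Q)=\Trans(Q)$. So the only real work concerns the forward and backward orbits $Q^iz_0$ and $Q^{-i}z_0$.

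Set $W:=\overline{\Inn(Q)z_0}$, which is irreducible since by Lemma \ref{lem_closed_orbit_in_q} it equals $\overline{Q^{i_1}z_0}$, the closure of the image of the irreducible variety $Q^{i_1}$. Under the hypotheses $\dim Q^{i_1}z_0=\dim Q^{i_1+1}z_0$ and $\dim Q^{-i_2}z_0=\dim Q^{-(i_2+1)}z_0$, Lemma \ref{lem_closed_orbit_in_q}(1) and its backward analogue give $\overline{Q^{i_1}z_0}=\overline{Q^{-i_2}z_0}=W$. I would first record that these dimension conditions are independent of the base point along the orbit: for $z=f(z_0)$ with $f\in\Inn(Q)$, Proposition \ref{prop_orbit}(4) gives $Q^jz=f(Q^jz_0)$ and $Q^{-j}z=f(Q^{-j}z_0)$, and since each $f$ is an automorphism of the variety $Q$ the dimensions are preserved; hence $\overline{Q^{i_1}z}=\overline{Q^{-i_2}z}=W$ for every $z\in\Inn(Q)z_0$.

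The heart of the argument is to reach an arbitrary $z\in\Inn(Q)z_0$ from $z_0$ by forward steps only, and for this I combine a forward‑dense set issuing from $z_0$ with a backward‑dense set issuing from $z$. Both $Q^{i_1}z_0$ and $Q^{-i_2}z$ are constructible (images of morphisms, by Chevalley's theorem) and dense in the irreducible $W$, so each contains a nonempty open subset of $W$, and these two opens must meet. Choosing $w$ in the intersection, I use the elementary equivalence $w\in Q^{-i_2}z\iff z\in Q^{i_2}w$ (obtained by peeling off the operations one at a time) together with the semigroup identity $Q^{i_2}(Q^{i_1}z_0)=Q^{i_1+i_2}z_0$ to conclude $z\in Q^{i_2}w\subseteq Q^{i_1+i_2}z_0$. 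As $z$ was arbitrary this yields $\Inn(Q)z_0\subseteq Q^{i_1+i_2}z_0$, the reverse inclusion being automatic, so $Q^{i_1+i_2}z_0=\Inn(Q)z_0$; the monotonicity $Q^jz_0\subseteq Q^{j+1}z_0$ of Proposition \ref{prop_orbit}(3) then propagates the equality to all $i\geq i_1+i_2$. The backward statement follows by applying the forward result at the moved base point: for $z\in\Inn(Q)z_0$ the same $i_1,i_2$ are admissible, so $z_0\in\Inn(Q)z=Q^{i_1+i_2}z$, i.e.\ $z\in Q^{-(i_1+i_2)}z_0$. For the ``in particular'' clause I would note that $\dim Q^iz_0$ is non‑decreasing, integer‑valued and bounded by $\dim Q$, and remains constant once two consecutive terms agree (Lemma \ref{lem_closed_orbit_in_q}); hence it stabilizes no later than step $\dim Q$, so $i_1=i_2=\dim Q$ are admissible and $i_1+i_2=2\dim Q$.

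The step I expect to be the main obstacle is precisely this forward‑reachability: the two hypotheses control forward orbits from $z_0$ and backward orbits from $z_0$ \emph{separately}, and a naive intersection of two forward‑dense sets would only yield reachability through a \emph{mixture} of forward and backward steps. The device that resolves this is to intersect the forward image of $z_0$ with the \emph{backward} image of the target $z$, so that the equivalence $w\in Q^{-i_2}z\iff z\in Q^{i_2}w$ converts the meeting point into a purely forward path from $z_0$ to $z$; keeping the index bookkeeping correct, so that the total number of forward steps is exactly $i_1+i_2$, is the one place where care is needed.
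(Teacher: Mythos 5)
Your proof is correct and follows essentially the same route as the paper: the key step, intersecting the dense constructible set $Q^{i_1}z_0$ with the dense set $Q^{-i_2}z$ (the latter obtained by transporting $Q^{-i_2}z_0$ via Proposition \ref{prop_orbit}(4)) and converting the meeting point into a forward path of length $i_1+i_2$, is exactly the paper's argument. The only (harmless) variation is that you deduce local closedness from Propositions \ref{prop_inn_trans_in_q} and \ref{prop_trans_orbit_in_x} rather than re-running the open-subset argument inside $W$, and you spell out the backward case and the $2\dim Q$ bound, which the paper leaves to symmetry.
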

\begin{proof}
We write $W=\overline{\Inn(Q)z_0}$ 
and denote the relative interior of $V\subseteq W$ by $V^\circ$. 

By the previous lemma and Chevalley's theorem, 
$Q^{i_1}z_0$ contains a nonempty open subset of $W$. 
Therefore $\Inn(Q)z_0$ is open in $W$, 
hence locally closed in $Q$. 

If $z_1=f(z_0)$ with $f\in\Inn(Q)$, 
then $Q^{-i_2}z_1$ is equal to $f(Q^{-i_2}z_0)$ 
by Proposition \ref{prop_orbit} (4), 
so it is dense in $W$. 
It follows that 
$Q^{i_1}z_0$ and $Q^{-i_2}z_1$ have nonempty intersection 
and therefore that $z_1$ is contained in $Q^{i_1+i_2}z_0$. 
\end{proof}

\begin{remark}\label{rem_orbit_in_x}
For an action on an arbitrary $X$, 
the example in Remark \ref{rem_inn_trans_in_x} 
shows that $\Op(Q, X)z_0$ is not necessarily constructible. 
\end{remark}

\begin{remark}
For an arbitrary irreducible closed subset $Z$ of $Q$, 
it is not necessarily true that $\Inn(Q)Z$ is locally closed. 
For example, let $Q=\SL_2(k)\times \bA^1$ be endowed with 
the operation $(h, t)\rhd (g, s)=(h^{-1}gh, s)$. 
Let 
$Z=\left\{\left.(\left(
\begin{array}{cc}1 & s \\ 0 & 1\end{array}
\right), s)\right| s\in\bA^1\right\}$. 
Then the fiber of $\Inn(Q)Z$ over $s\in \bA^1$ is 
$\{A\in\SL_2(k) | (A-I)^2=O, A\not=I\}$ if $s\not=0$ 
and is $\{I\}$ if $s=0$. 
It follows that $\overline{\Inn(Q)Z}=\{A\in\SL_2(k) | (A-I)^2=O\}\times\bA^1$ and 
that $\Inn(Q)Z$ is not open in its closure. 
\end{remark}

In showing $\rhd$-connectedness of a quandle, 
the following proposition is useful. 

\begin{proposition}\label{prop_connectedness_criteria}
For a quandle variety, the following conditions are equivalent. 
\begin{enumerate}
\item[(1)]
$Q$ is $\rhd$-connected. 
\item[(2)]
$Q^iq=Q$ for any $q\in Q$ and $i\geq 2\dim Q$. 
\item[(2')]
$Q^iq=Q$ for some $q\in Q$ and $i\geq 2\dim Q$. 
\item[(3)]
$Q^iq=Q$ for any $q\in Q$ and $i\gg 0$. 
\item[(3')]
$Q^iq=Q$ for some $q\in Q$ and $i\gg 0$. 
\item[(4)]
$\overline{Q^iq}=Q$ 
for any $q\in Q$ and $i\geq \dim Q$. 
\item[(5)]
$\overline{Q^iq}=Q$ 
for any $q\in Q$ and $i\gg 0$. 
\end{enumerate}
\end{proposition}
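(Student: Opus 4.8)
The plan is to reduce every condition to two facts already established: Proposition \ref{prop_orbit_in_q}, which says $Q^i q=\Inn(Q)q$ for all $i\geq 2\dim Q$, and Lemma \ref{lem_closed_orbit_in_q}, which says $\overline{Q^i q}=\overline{\Inn(Q)q}$ for all $i\geq \dim Q$, together with the monotonicity $Q^i q\subseteq Q^{i+1}q$ from Proposition \ref{prop_orbit}(3). Since $Q$ is $\rhd$-connected precisely when $\Inn(Q)q=Q$ for one (equivalently every) $q$, the stabilization $Q^i q=\Inn(Q)q$ ties $(1)$, $(2)$ and $(2')$ together at once: condition $(1)$ gives $Q^i q=\Inn(Q)q=Q$ for $i\geq 2\dim Q$, which is $(2)$; the implication $(2)\Rightarrow (2')$ is trivial; and from $(2')$ one reads off $\Inn(Q)q=Q^i q=Q$, so a single orbit fills $Q$ and $Q$ is $\rhd$-connected.

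The equivalences $(2)\Leftrightarrow (3)\Leftrightarrow (3')$ I would handle by monotonicity alone. Here $(2)\Rightarrow (3)$ and $(3)\Rightarrow (3')$ are formal, since ``$i\geq 2\dim Q$'' is an instance of ``$i\gg 0$'' and ``for all $q$'' implies ``for some $q$''. For $(3')\Rightarrow (2')$, once $Q^i q=Q$ holds for some large $i$, the increasing chain $Q^i q\subseteq Q^{i+1}q\subseteq\cdots$ forces $Q^j q=Q$ for every larger $j$; choosing $j\geq 2\dim Q$ yields $(2')$.

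For the closure conditions I would run the parallel cycle $(1)\Rightarrow (4)\Rightarrow (5)\Rightarrow (1)$ using Lemma \ref{lem_closed_orbit_in_q}. If $Q$ is $\rhd$-connected then $\overline{\Inn(Q)q}=Q$, so $\overline{Q^i q}=\overline{\Inn(Q)q}=Q$ for $i\geq \dim Q$, giving $(4)$, and $(4)\Rightarrow (5)$ is again formal. The one substantive step is $(5)\Rightarrow (1)$: from $(5)$ and the lemma we get $\overline{\Inn(Q)q}=Q$ for every $q$, i.e. every $\Inn(Q)$-orbit is dense. By Proposition \ref{prop_orbit_in_q} each orbit is locally closed, and a dense locally closed subset of the irreducible (hence connected) variety $Q$ is open; thus every orbit is open, its complement --- a union of orbits --- is open as well, so each orbit is clopen. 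Connectedness of $Q$ then forces a single orbit, which is $(1)$.

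I expect $(5)\Rightarrow (1)$ to be the main obstacle, as it is the only place where the purely dimension-theoretic information ``each orbit is dense'' must be upgraded to the set-theoretic conclusion ``there is exactly one orbit''; this is exactly where local closedness of orbits and irreducibility of $Q$ are indispensable, whereas every other implication is bookkeeping around the two cited results and the monotonic chain.
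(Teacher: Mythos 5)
Your proof is correct and follows essentially the same route as the paper: both reduce everything to Proposition \ref{prop_orbit_in_q}, Lemma \ref{lem_closed_orbit_in_q} and the monotonicity of $Q^iq$, with the only substantive step being $(5)\Rightarrow(1)$. There the paper intersects the two dense constructible sets $Q^iq$ and $Q^iq'$ directly via Chevalley's theorem, whereas you pass through openness of every orbit and connectedness of $Q$; the two arguments are interchangeable.
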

\begin{proof}
Assume the condition (1). 
Then (2) and (4) follow from the previous proposition and lemma. 

The implications from (2) to (2')--(3'), 
from (2)--(3') to (1) and from (4) to (5) are obvious. 

Assume that (5) holds. 
Then for any $q, q'\in Q$, there exists $i$ such that 
$Q^iq$ and $Q^iq'$ are both dense. 
By Chevalley's lemma, they contain nonempty open subsets, 
so they have nonempty intersection. 
Thus $q$ and $q'$ can be connected by an inner automorphism. 
\end{proof}

\begin{corollary}
If $Q$ is nonsingular and $T_qs_q-1$ is invertible for any $q\in Q$, 
then $Q$ is $\rhd$-connected. 
\end{corollary}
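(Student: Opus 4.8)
The plan is to show that \emph{every} $\Inn(Q)$-orbit is open in $Q$; since $Q$ is irreducible, hence connected, and the orbits partition $Q$, this immediately forces a single orbit, i.e. $\rhd$-connectedness. By Proposition \ref{prop_orbit_in_q} each orbit $\Inn(Q)q$ is locally closed, so it suffices to exhibit, for every $q$, a nonempty open subset of $Q$ contained in $\Inn(Q)q$: a locally closed subset of the irreducible variety $Q$ that contains a nonempty open subset is automatically open (its closure must be all of $Q$, and it is open in its closure). Once all orbits are open, the complement of any one orbit is a union of open orbits, hence open, so each orbit is clopen, and connectedness of $Q$ leaves exactly one orbit.

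The crucial point is to relate the hypothesis, which concerns $s_q = q\rhd(-)$, to the orbit near $q$, which is instead governed by the ``right translation'' $R_q\colon Q\to Q$, $R_q(r)=r\rhd q$. Indeed $R_q(r)=s_r(q)\in\Inn(Q)q$, so $\Img(R_q)=Q\rhd q=Q^1q\subseteq\Inn(Q)q$, and $R_q(q)=q$. To compute $T_qR_q$ I would differentiate the idempotency axiom. Writing $m\colon Q\times Q\to Q$ for the regular operation $m(a,b)=a\rhd b$ and $\Delta\colon Q\to Q\times Q$ for the diagonal, the relation $m\circ\Delta=\mathrm{id}_Q$ coming from $q\rhd q=q$ yields, on tangent spaces at $q$,
\[
(D_1 m)_{(q,q)}+(D_2 m)_{(q,q)}=\mathrm{id}_{T_qQ}.
\]
Since $(D_2 m)_{(q,q)}=T_qs_q$ and $(D_1 m)_{(q,q)}=T_qR_q$, this gives $T_qR_q=1-T_qs_q=-(T_qs_q-1)$, which is invertible by hypothesis.

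Finally, $R_q$ is a morphism of nonsingular varieties of the same dimension whose differential at $q$ is an isomorphism, so $R_q$ is \'etale at $q$ (this characterization of \'etaleness is valid in every characteristic once source and target are smooth). As \'etale morphisms are open, $R_q$ carries a suitable open neighborhood of $q$ onto an open subset of $Q$ containing $q=R_q(q)$, and this open set lies in $\Img(R_q)\subseteq\Inn(Q)q$; this completes the reduction. The main obstacle is the opening of the second paragraph: recognizing that the tangential regularity hypothesis on $s_q$ converts, through differentiation of $q\rhd q=q$, into invertibility of the differential of the \emph{other} map $R_q$, whose image realizes the forward orbit $Q^1q$. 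After that identity is in hand, both the \'etale-image step and the clopen/connectedness step are routine.
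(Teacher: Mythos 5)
Your proof is correct, and its heart coincides with the paper's: differentiating the idempotency $r\rhd r=r$ at $q$ to see that the differential of the right translation $t_q(r)=r\rhd q$ equals $1-T_qs_q$ and is therefore invertible. You diverge only in the endgame. The paper deduces from this that $t_q$ is dominant, so $Q\rhd q$ is dense for every $q$, and then invokes the implication (5)$\Rightarrow$(1) of Proposition \ref{prop_connectedness_criteria}, whose proof runs through Chevalley's theorem. You instead upgrade invertibility of the differential to \'etaleness of $t_q$ at $q$ (valid in any characteristic for smooth source and target, as you note), conclude that every orbit contains an open neighborhood of each of its points and hence is open, and finish by topological connectedness of the irreducible variety $Q$. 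Both finishes are sound; yours is slightly more self-contained, needing only the local closedness of orbits from Proposition \ref{prop_orbit_in_q} --- and in fact not even that, since a subset all of whose points are interior is open outright --- rather than the full density criterion, while the paper's version reuses machinery it has already built and will use again.
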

\begin{proof}
Let $t_q: Q\to Q$ be defined by $t_q(r)=r\rhd q$. 
From $r\rhd r=r$, we see that $T_qt_q+T_qs_q=1$. 
Thus the assumption means that $T_qt_q$ is invertible. 
It follows that $t_q$ is dominant, 
and $Q$ is $\rhd$-connected 
by the proposition. 
\end{proof}

\begin{remark}\label{rem_unip}
The converse does not hold. 
Consider the quandle 
\[
Q=\left\{\left. A^{-1}\left(\begin{array}{cc}
1 & 1 \\ 0 & 1
\end{array}\right)A\right| A\in\SL_2(k)\right\} 
\]
with the conjugation operation. 

We show that $Q$ is $\rhd$-connected. 
Note that an element of $Q$ can be written as 
$M=\left(\begin{array}{cc}
x & y \\ z & w
\end{array}\right)$ 
with $xw-yz=1$, $x+w=2$ and $M\not=I_2$. 
Let $J_t$ denote the matrix $\left(\begin{array}{cc}
1 & t \\ 0 & 1
\end{array}\right)$. 
Then we have $M\rhd J_1=\left(\begin{array}{cc}
1+zw & w^2 \\ -z^2 & 1-zw
\end{array}\right)$. 
Since $z$ and $w$ can be considered as algebraically independent functions, 
we see that $Q\rhd J_1$ is dense. 
Using the transitive action of $\SL_2(k)$, 
we see that $Q\rhd M$ is dense for any $M\in Q$. 
Now Proposition \ref{prop_connectedness_criteria} 
tells us that $Q$ is $\rhd$-connected. 

Alternatively, 
note that the $\Inn(Q)$-orbits are equal to the orbits by the conjugacy action 
of the group generated by $Q$. 
We see that $Q\cdot\{J_t| t\not=0\}$, and therefore $G$, 
contains a nonempty open subset of $\SL_2(k)$ 
by an argument similar to the above. 
Such a subgroup must be equal to $\SL_2(k)$, 
hence the orbit of $J_1$ is equal to $Q$. 

On the other hand, 
the fixed point locus of $s_{J_1}$ is 
$\{J_t| t\not=0\}$. 
Again using the $\SL_2(k)$-action, 
we see that the fixed point locus of $s_M$ is a $1$-dimensional set 
containing $M$ for any $M\in Q$. 
In particular, $T_Ms_M-1$ is singular. 
\end{remark}

\begin{corollary}\label{cor_open_orbit_is_connected}
If an orbit $U:=\Inn(Q)q_0$ is dense in $Q$, 
then it is open 
and is a $\rhd$-connected quandle. 
\end{corollary}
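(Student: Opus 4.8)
The plan is to establish the two assertions in turn, relying on Proposition \ref{prop_orbit_in_q} for openness and on Proposition \ref{prop_orbit_opensub} for $\rhd$-connectedness.

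First I would prove that $U$ is open. By Proposition \ref{prop_orbit_in_q}, the orbit $U=\Inn(Q)q_0$ is a locally closed subvariety of $Q$, i.e.\ it is open in its closure $\overline{U}$. Since $U$ is assumed to be dense, $\overline{U}=Q$, and therefore $U$ is open in $Q$. As $Q$ is irreducible and reduced, the nonempty open subset $U$ inherits these properties, so $U$ is itself a quandle variety once we know that it is a subquandle.

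Next I would check that $U$ is a subquandle. Let $q,r\in U$. Since $r$ and $q_0$ lie in the same $\Inn(Q)$-orbit, we have $\Inn(Q)r=\Inn(Q)q_0=U$; because $s_q, s_q^{-1}\in\Inn(Q)$, both $q\rhd r=s_q(r)$ and $q\rhd^{-1}r=s_q^{-1}(r)$ belong to $\Inn(Q)r=U$. Hence $U$ is closed under $\rhd$ and $\rhd^{-1}$, so it is an open subquandle of $Q$.

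Finally, for $\rhd$-connectedness I would apply Proposition \ref{prop_orbit_opensub} to the natural self-action of $Q$ on $X=Q$ (see Remark \ref{rem_action}(1)), with the open subquandle $U$ and the base point $q_0$. This yields
\[
\Op(U,Q)q_0=\Op(Q,Q)q_0=\Inn(Q)q_0=U.
\]
The remaining and only delicate point is to identify this with the inner-automorphism orbit of the quandle $U$: since $U$ is a subquandle, each generator $s_q$ with $q\in U$ maps $U$ bijectively onto itself, so restriction to $U$ carries $\Op(U,Q)$ onto $\Inn(U)$; as $q_0\in U$, restriction does not change the value at $q_0$, whence $\Inn(U)q_0=\Op(U,Q)q_0=U$. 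Thus $\Inn(U)$ acts transitively on $U$, i.e.\ $U$ is $\rhd$-connected. I expect the main obstacle to be precisely this last identification — distinguishing the orbit of the larger group $\Inn(Q)$, generated by all $s_q$ with $q\in Q$, from that of $\Inn(U)$, generated only by $s_q$ with $q\in U$ — which is exactly what Proposition \ref{prop_orbit_opensub} is designed to overcome.
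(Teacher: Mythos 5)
Your argument is correct, but it reaches $\rhd$-connectedness by a genuinely different route from the paper. The paper's proof is two lines: openness from Proposition \ref{prop_orbit_in_q}; then, for any $q\in U$, the same proposition gives $Q^iq=U$ for some $i$, the density of $U$ in $Q$ gives that $U^iq$ is dense in $Q^iq=U$, and condition (5) of Proposition \ref{prop_connectedness_criteria} (the intrinsic forward-orbit criterion, applied to the quandle variety $U$) concludes. You instead verify explicitly that $U$ is an open subquandle, invoke Proposition \ref{prop_orbit_opensub} for the self-action of $Q$ on $X=Q$ to get $\Op(U,Q)q_0=\Op(Q,Q)q_0=\Inn(Q)q_0=U$, and then restrict to $U$ to identify this with $\Inn(U)q_0$. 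Both arguments rest on the same underlying mechanism (orbits are locally closed, and a dense family of symmetries yields a dense subset of each finite-stage orbit), so neither is more general; what your version buys is that the subquandle structure on $U$ and the passage from $\Op(U,Q)$ to $\Inn(U)$ --- which the paper uses implicitly when it writes $U^iq$ and calls $U$ a quandle --- are spelled out, at the cost of routing through Proposition \ref{prop_orbit_opensub} rather than the connectedness criterion. The final identification you flag is indeed the only delicate point, and your justification is the right one: every $s_q$ with $q\in U$ maps $U$ bijectively onto itself because $U$ is a subquandle closed under both $\rhd$ and $\rhd^{-1}$, so restriction is a group homomorphism carrying the generators of $\Op(U,Q)$ to those of $\Inn(U)$ and hence preserving the orbit of $q_0\in U$.
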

\begin{proof}
Openness follows from Proposition \ref{prop_orbit_in_q}. 
For any $q\in U$, we have $Q^iq=U$ for some $i$ 
by the same proposition. 
Hence $U^iq$ is dense in $U$, 
and $U$ is $\rhd$-connected by the previous proposition. 
\end{proof}

\begin{remark}
In general, an $\Inn(Q)$-orbit is not necessarily $\rhd$-connected. 
In Example \ref{ex_ext_of_trivial}, 
let $X=A=\bA^1$ and $F(x, y)=y-x$. 
Then $Q=\bA^2$ with $(x, a)\rhd (y, b)=(y, b+y-x)$, 
and the orbit of $(0, 0)$ is $\{0\}\times \bA^1$, 
which is a trivial quandle. 
\end{remark}

\section{Main Theorem}

In this section, 
we work over an algebraically closed field $k$ of characteristic $0$. 
The word ``variety'' will always mean an irreducible and reduced variety. 

We denote the category of algebraic varieties over $k$ by $(\Var_k)$ 
and the category of groups by $(\Grp)$. 
For a quandle variety $Q$, let 
$\cTrans(Q): (\Var_k)\to (\Grp)$ 
be the functor defined by 
\[
\cTrans(Q)(S) :=
\{
f\in\Aut_S(S\times Q); 
\hbox{$f|_{\{s\}\times Q}\in\Trans(Q)$ 
for any $s\in S(k)$} 
\}. 
\]

The following is our main theorem. 

\begin{theorem}\label{theorem_main}
Let $Q$ be a quandle variety which is irreducible and normal as a variety. 
Assume that $Q$ contains an open $\Inn(Q)$-orbit $O$. 

(1)
The group $\Trans(Q)$ can be given a structure of a connected algebraic group 
$\bTrans(Q)$ 
such that the action map 
$\alpha: \bTrans(Q)\times Q \to \bTrans(Q)\times Q; (f, q)\mapsto (f, f(q))$ 
is a regular map. 
If $Q$ is quasi-affine, then $\bTrans(Q)$ is affine. 
The pair $(\bTrans(Q), \alpha)$ represents the functor $\cTrans(Q)$. 

(2)
The open orbit $O$ is $\rhd$-connected, 
and there are natural isomorphisms 
$\Trans(O)\cong \Trans(Q)$ and $\Inn(O)\cong\Inn(Q)$.

(3)
For any two elements $q$ and $r$ of $Q$, 
let 
\[
\varphi_q: \bTrans(Q)\to\bTrans(Q); g\mapsto s_q gs_q^{-1}
\]
and 
\[
\psi_{q, r}: \bTrans(Q)\to\bTrans(Q); g\mapsto s_q gs_r^{-1}
\]
be the maps from Proposition \ref{prop_homogeneous_quandle}. 
Then $\varphi_q$ is an automorphism of an algebraic group 
and $\psi_{q, r}$ is an automorphism of an algebraic variety. 

The natural map 
\begin{eqnarray*}
\pi_q: \bTrans(Q)/\bTrans(Q)_q & \to & \Inn(Q)q \\
 g\bTrans(Q)_q & \mapsto & gq
\end{eqnarray*}
of Proposition \ref{prop_homogeneous_quandle} 
is an isomorphism of quandle varieties 
compatible with actions between orbits. 
\end{theorem}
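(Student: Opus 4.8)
The plan is to establish the three parts of the main theorem in a logical order, with the construction of the algebraic group structure on $\Trans(Q)$ being the central and most technically demanding step.

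\medskip
The key idea, as hinted in the introduction, is to realize $\Trans(Q)$ as an orbit inside some naturally constructed $Q$-variety on which the transvection group acts. First I would exploit the open $\rhd$-connected orbit $O$ (whose $\rhd$-connectedness is immediate from Corollary \ref{cor_open_orbit_is_connected}) to reduce questions to $O$. By Proposition \ref{prop_homogeneous_quandle}, for a fixed basepoint $q\in O$ the orbit is abstractly $(\Trans(Q)/\Trans(Q)_q,\rhd_{\varphi_q})$, but this only gives a set-theoretic description. To put an algebraic structure on the group itself, the natural approach is to consider the action of $Q$ on the product $Q\times Q$ (or on several copies of $Q$), encoding an element $s_{b_1}^{-1}\cdots s_{a_k}\cdots s_{a_1}$ of $\Trans(Q)$ by its effect on a tuple of sufficiently many generic points. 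Concretely, I would fix a generic point or a suitable finite configuration and use Proposition \ref{prop_trans_orbit_in_x}, which says that on any $Q$-variety the $\Trans(Q,X)$-orbits stabilize after finitely many steps and are \emph{locally closed subvarieties}. Applying this to the diagonal-type action on a power $Q^{\times N}$ for large $N$, the orbit of a generic configuration should be in bijection with $\Trans(Q)$ once $N$ exceeds a bound governed by $\dim Q$; this identifies $\Trans(Q)$ with a locally closed, hence quasi-projective, variety, and one checks the bijection is independent of the choices. The group multiplication and inversion are then seen to be regular because they are induced by composition of the defining maps $s_q^{\pm 1}$, which are algebraic.

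\medskip
The hard part will be verifying that this variety structure is \emph{canonical} and that multiplication is regular, i.e. that the identification of $\Trans(Q)$ with an orbit is compatible with the group operation — the subtlety is that composing two transvections corresponds to a composite action that must be shown to land in the same orbit variety and to depend algebraically on both factors. I expect to handle this by the functorial characterization: showing that $(\bTrans(Q),\alpha)$ represents $\cTrans(Q)$, so that the group law is forced by the universal property and the regularity of $\alpha$ follows from the construction of the representing object. Connectedness follows because $\Trans(Q)$ is a union of the irreducible images $\Trans_i(Q)$ and the orbit is irreducible (being the image of an irreducible configuration space); affineness in the quasi-affine case follows since an orbit inside an affine space under a unipotent-to-reductive-type decomposition remains quasi-affine, and a quasi-affine algebraic group is affine. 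For part (3), once $\bTrans(Q)$ is an algebraic group, the maps $\varphi_q$ and $\psi_{q,r}$ are regular because conjugation and the two-sided translation $g\mapsto s_qgs_r^{-1}$ are composites of regular maps; that $\varphi_q$ is a group automorphism and $\psi_{q,r}$ a variety automorphism is then formal, with invertibility supplied by $\varphi_q^{-1}=\varphi_q^{-1}$ coming from $s_q^{-1}$. Finally, the isomorphism $\pi_q\colon\bTrans(Q)/\bTrans(Q)_q\to\Inn(Q)q$ is already established as a quandle isomorphism on the level of sets by Proposition \ref{prop_homogeneous_quandle}; the remaining work is to promote it to an isomorphism of varieties, which uses normality of $Q$ together with the fact that the natural orbit map from an algebraic group is a bijective separable morphism onto a normal variety and hence an isomorphism in characteristic zero.

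\medskip
The single biggest obstacle I anticipate is the passage from ``$\Trans(Q)$ is a set-theoretic orbit that happens to be a locally closed variety'' to ``$\Trans(Q)$ is an algebraic group with regular multiplication.'' The orbit-identification via a configuration in $Q^{\times N}$ gives the underlying variety, but proving that the abstract group structure transported along this identification is algebraic requires carefully tracking how the composite of two configuration-orbits is computed, and showing the bound $N\gg\dim Q$ guarantees \emph{faithfulness} of the configuration representation so that distinct transvections give distinct points. The functorial reformulation via $\cTrans(Q)$ is, I believe, the cleanest route around this difficulty, converting the regularity of multiplication into a representability statement that can be checked on the level of families.
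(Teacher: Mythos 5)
Your overall architecture matches the paper's: realize $\Trans(Q)$ as an orbit in an auxiliary $Q$-variety with trivial stabilizer, invoke local closedness of $\Trans$-orbits (Proposition \ref{prop_trans_orbit_in_x}), and package the group law via representability of $\cTrans(Q)$. But there is a genuine gap at the heart of your construction: you take the auxiliary variety to be a power $Q^{\times N}$ and assert that the stabilizer of a generic configuration of $N\gg\dim Q$ points is trivial (``faithfulness of the configuration representation''). Nothing in the paper's Section 3, and nothing you propose, proves this. An element of $\Trans(Q)$ fixing $q_1,\dots,q_N$ fixes the countable orbit of each $q_j$ under $\langle s_{q_1},\dots,s_{q_N}\rangle$, but you would need that orbit to be Zariski dense, or some other rigidity statement, and the natural route to such a statement --- a descending chain argument on stabilizers --- presupposes that $\Trans(Q)$ is a finite-dimensional algebraic group, which is exactly what is being proven. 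The dimension bounds of Section 3 do not help: $\dim\Trans(Q)\pmb{q}^{(N)}$ is only bounded by a quantity growing with $N$, so it need not stabilize a priori. The paper escapes this circle by enriching the configuration with first-order data: the auxiliary variety is $X=\Hom_{O^n}(k^{\dim Q}\times O^n,TO^n)\times O$, i.e.\ a point of $O^n$ together with a frame-like linear map into its tangent space and a base point. The trivial-stabilizer claim is then an honest infinitesimal rigidity argument: an automorphism fixing such a point preserves the induced vector fields $\tilde{\pmb{v}}$ on $O$, which form a local frame at a fixed point, so it is the identity on a formal neighborhood and hence on $Q$. This is the analogue of ``an affine transformation is determined by its $1$-jet'' in Kowalski's setting, and it is the step your proposal is missing.

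A second, lesser issue is the regularity of the action map $\alpha$. You propose to derive it from representability of $\cTrans(Q)$, but in the paper the logic runs the other way: representability (Proposition \ref{prop_good_action_to_algebraicity}(3)) is deduced \emph{after} $\alpha$ is shown to be regular, and that regularity is not formal. One must descend the obviously regular map $Q^{2n}\times Q\to Q$, $(\pmb{a},\pmb{b},q)\mapsto s_{\pmb{a},\pmb{b}}(q)$, along the surjection $\tau_n\colon Q^{2n}\to\bTrans(Q)$, and the paper does this by analyzing the closure of the graph of the action and applying a dedicated lemma (a composite that is smooth and surjective, together with birationality and normality, forces an isomorphism); this is where the normality hypothesis on $Q$ is actually used, together with characteristic $0$. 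Your remarks on connectedness, affineness in the quasi-affine case, part (3), and the bijective-morphism argument for $\pi_q$ are consistent with the paper, but the proposal as written does not close either of the two gaps above, and the first one is essential.
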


Before starting the proof of the theorem, 
let us recall how the corresponding fact 
in differential geometry was proven (see Theorem \ref{theorem_kowalski}). 
To prove that a regular $s$-manifold $M$ is 
a regular $\varphi$-space, 
one first constructs a canonical affine connection 
which turns $M$ into a generalized affine symmetric space. 
This rigidifies the manifold $M$, 
in the sense that a diffeomorphism which preserves the affine connection 
is determined by the image of a point 
and the tangent map there. 
So $\Trans(M)$ can be realized as a subset of a finite dimensional space. 
Then, it can be proven that $\Trans(M)$ is a Lie group 
and that $M$ is a homogeneous space. 

Accordingly, our proof is divided into two parts. 
First, we find certain structure which rigidifies the space 
and embed $\Trans(Q)$ into an algebraic variety. 
Second, we show that the image of this embedding 
is an algebraic group satisfying the conditions. 

In view of Proposition \ref{prop_homogeneous_quandle},
one might hope to extend our result to quandle varieties 
which are homogeneous in the sense that 
the action of $\Autq(Q)$ is transitive. 
However, it seems difficult to find a good algebraic subgroup 
of $\Autq(Q)$ 
without any assumption on the action of $\Inn(Q)$. 
For example, consider $Q=\bA^n$ with the trivial quandle operation. 
Then the group $\Autq(Q)$ is equal to $\Aut(\bA^n)$, 
which is a very complicated group. 
\medskip

To carry out our plan of proof, 
we consider the following condition for a $Q$-variety $X$. 

\begin{condition}\label{condition_good_action}
(1)
There exists a natural transformation $\Phi$ 
from $\cTrans(Q)$ to 
\[
\cAut(X): (\Var_k)\to (\Grp); 
S\mapsto 
\Aut_S(S\times X) 
\]
such that $\Phi(pt)$ commutes with 
the natural surjective homomorphisms $\As(Q)_0\to \Trans(Q)$ 
and $\As(Q)_0\to \Trans(Q, X)$. 

(2)
There exists a point $x\in X$ such that $\Trans(Q)_x=\{id\}$. 
\end{condition}

\begin{remark}
By condition (1), 
we have a natural homomorphism $\Phi(pt): \Trans(Q)\to \Trans(Q, X)$. 
It is an isomorphism by condition (2). 

The natural transformation $\Phi$ is unique if it exists. 
In fact, let $f$ be a family of transvections over $S$. 
Then $\Phi(S)(f)$ must be $(s, x)\mapsto (s, (\Phi(pt)(f_s))(x))$. 
\end{remark}

The theorem can be reduced to the following propositions. 

\begin{proposition}\label{prop_good_action}
Let $Q$ be a quandle variety 
and assume that $Q$ contains an open $\Inn(Q)$-orbit $O$. 
Let $n$ be a positive integer and 
$X=\Hom_{O^n}(k^{\dim Q}\times O^n, TO^n)\times O$, 
where $\Hom$ denotes the vector bundle of homomorphisms over $O^n$. 
Then $X$ satisfies Condition \ref{condition_good_action} for $n\gg 0$. 
More precisely, the following hold. 

(1)
For any $n$, 
there is a natural transformation from $\cAutq(Q)$ to $\cAut(X)$. 
Hence $\Autq(Q)$ acts on $X$, 
and $Q$ acts algebraically on $X$ via $\Inn(Q)$. 

(2)
For $n\gg 0$, 
there exists a point $x\in X$ 
such that $\Autq(Q)_x=\{id\}$. 
\end{proposition}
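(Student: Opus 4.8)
For part (1), the plan is to define the natural transformation $\Phi$ on families and then specialize. Given a scheme $S$ and $f\in\cAutq(Q)(S)$, note that $f$ is a regular $S$-automorphism of $S\times Q$ whose fibers are quandle automorphisms; since $\Inn(Q)$ is normal in $\Autq(Q)$ and $O$ is the unique open $\Inn(Q)$-orbit, each fiber preserves $O$, so $f$ restricts to a regular $S$-automorphism of $S\times O$. Applying the functorial operations of taking the $n$-fold product, the tangent bundle, and the bundle $\Hom_{O^n}(k^{\dim Q}\times O^n, TO^n)$ (post-composing with the differential, leaving the $k^{\dim Q}$-factor untouched), together with the action on the last factor $O$, produces a regular $S$-automorphism of $S\times X$, i.e. an element of $\cAut(X)(S)$. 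Naturality in $S$ and multiplicativity are routine. Evaluating at $S=\mathrm{pt}$ gives the action of (regular) quandle automorphisms, hence of $\Inn(Q)$, on $X$. To see the induced $Q$-action is algebraic, I would apply $\Phi$ to the tautological family $\tau\in\cAutq(Q)(Q)$ given by $\tau(q,r)=(q,q\rhd r)$, whose fiber over $q$ is $s_q$; the map underlying $\Phi(Q)(\tau)$ is exactly $(q,x)\mapsto s_q\cdot x$, which is therefore regular.

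For part (2), the heart is a density lemma: there is a finite subset $\{p_1,\dots,p_n\}\subseteq O$ whose generated subquandle is dense, that is $\overline{\langle p_1,\dots,p_n\rangle}=O$. I would prove this by a Noetherian ascending-chain argument. The closure $Y_m:=\overline{\langle p_1,\dots,p_m\rangle}$ is again a closed subquandle, since $\rhd$ and $\rhd^{-1}$ are continuous and closures of stable sets are stable; and if $Y_m\neq O$ one may choose $p_{m+1}\in O\setminus Y_m$, forcing $Y_m\subsetneq Y_{m+1}$. As $O$ is Noetherian, this strictly increasing chain of closed sets must terminate, so some $Y_n=O$. Fixing such $p_1,\dots,p_n$ (and enlarging $n$ freely so the conclusion holds for all large $n$), I would take $x=(p_1,\dots,p_n;\lambda;q)\in X$ with each block $\lambda^{(i)}\colon k^{\dim Q}\to T_{p_i}O$ an isomorphism, a nonempty condition since $\dim Q=\dim O$, and $q\in O$ arbitrary. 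Here $O$ is smooth: being a single $\Inn(Q)$-orbit it is homogeneous under regular automorphisms of $Q$, and it meets the smooth locus, so these tangent spaces are well defined.

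Now suppose $f\in\Autq(Q)$ fixes $x$. From the $O^n$-coordinate, $f(p_i)=p_i$; from the $\Hom$-coordinate, $df_{p_i}\circ\lambda^{(i)}=\lambda^{(i)}$, whence $df_{p_i}=\mathrm{id}$ since $\lambda^{(i)}$ is onto. Consider $Z:=\{z\in O\mid f(z)=z,\ df_z=\mathrm{id}\}$. Differentiating the identity $f(a\rhd b)=f(a)\rhd f(b)$ at a pair in $Z\times Z$, and using that $s_a$ is an automorphism (so the partial differential of $(a,b)\mapsto a\rhd b$ in the second slot is already an isomorphism onto $T_{a\rhd b}O$), gives $df_{a\rhd b}=\mathrm{id}$; together with $f(a\rhd b)=a\rhd b$, and the analogous computation for $\rhd^{-1}$, this shows $Z$ is a subquandle. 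Since $Z$ is closed and contains $p_1,\dots,p_n$, it contains $\overline{\langle p_1,\dots,p_n\rangle}=O$; in particular $f|_O=\mathrm{id}$, hence $f=\mathrm{id}$ by density of $O$, so $\Autq(Q)_x=\{\mathrm{id}\}$.

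The step I expect to demand the most care is the interplay between abstract and regular automorphisms. An element of $\Autq(Q)$ is a priori only a bijection preserving $\rhd$, so neither is its fixed locus obviously closed nor is $df_z$ defined; the factor $\Hom_{O^n}(k^{\dim Q}\times O^n, TO^n)$ is precisely the rigidifying device, the algebraic analogue of ``a point together with a frame'' in the differential-geometric proof, that forces an automorphism fixing $x$ to be pinned down to first order at the $p_i$ and then, through the subquandle $Z$, everywhere. I would therefore check carefully that the action on the tangent and $\Hom$ factors is meaningful for the stabilizer elements in question, equivalently reducing the stabilizer computation to regular automorphisms, after which the density lemma does the real work. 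That lemma is the conceptual crux, and its proof is short once one observes that closures of finitely generated subquandles form an ascending chain of closed subquandles.
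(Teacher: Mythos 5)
Your part (1) is essentially the paper's argument: restrict a family of quandle automorphisms to $O\times S$ (using that the open orbit is unique, hence preserved), apply the functorial operations of products, relative tangent bundles and $\Hom$-bundles, and deduce algebraicity of the $Q$-action from the tautological family over $Q$. Part (2) is correct but takes a genuinely different route. The paper chooses a single $\bq_0\in O^n$ at which $\mu_1:\bq\mapsto q_n\rhd\cdots\rhd q_1\rhd q_0$ is submersive (this is where characteristic $0$ and generic smoothness enter), takes $x=((\bq_0,\varphi),q_0)$ with $T_{\bq_0}\mu_1\circ\varphi$ an isomorphism, and transports the columns of $\varphi$ along $\mu$ to a global frame of vector fields $\tilde{\bv}$ on $O$; an $f$ fixing $x$ preserves a point and this frame, hence is the identity on a formal neighborhood and so on $Q$. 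You instead pick $p_1,\dots,p_n$ whose generated subquandle is dense --- your Noetherian chain argument is sound, since the closure of a set stable under $\rhd^{\pm1}$ is again stable ($\overline{S\times S}=\overline{S}\times\overline{S}$ in the Zariski topology) --- place a full-rank block $\lambda^{(i)}$ at each $p_i$, and propagate the first-order condition $\{f(z)=z,\ df_z=\mathrm{id}\}$ through the closed subquandle $Z$ via $f\circ s_a=s_a\circ f$ and invertibility of $(ds_a)_b$. Your route buys elementarity: it avoids vector fields, the formal-neighborhood rigidity step, and the generic-smoothness input (so it is characteristic-free, though the rest of \S4 needs characteristic $0$ anyway); the paper's route buys the frame $\{\tilde{\bv}\}$, the algebraic surrogate of the canonical connection, staying closer to the differential-geometric proof it is modeled on. The caveat you flag --- that $df_z$ and closedness of the fixed locus only make sense once the stabilizer computation is reduced to regular automorphisms, i.e. to $\cAutq(Q)(pt)$ --- is real, but it is needed identically in the paper's proof (which also differentiates $f$), so it is not a defect of your argument relative to the original.
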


\begin{proposition}\label{prop_good_action_to_algebraicity}
Let $Q$ be a normal quandle variety. 
Let $X$ be a $Q$-variety 
satisfying Condition \ref{condition_good_action} 
with a point $x$ as in the condition. 
Note that 
$\bTrans(Q):=\Trans(Q)x$ is a locally closed subvariety of $X$ 
by Proposition \ref{prop_trans_orbit_in_x}. 

(1)
If $U$ is an open subquandle, 
then there are natural isomorphisms 
$\Trans(U)\cong \Trans(Q)$ and $\Inn(U)\cong\Inn(Q)$. 

(2)
The action map $\alpha: \bTrans(Q)\times Q\to \bTrans(Q)\times Q$ 
is an automorphism of a variety. 

(3)
The pair $(\bTrans(Q), \alpha)$ represents the functor $\cTrans(Q)$. 

(4)
Under the identification of $\bTrans(Q)$ and $\Trans(Q)$ by the natural bijection, 
$\bTrans(Q)$ with the induced multiplication is an algebraic group. 

(5)
For any $q, r\in Q$, 
the map $\psi_{q, r}: \bTrans(Q)\to \bTrans(Q); g\mapsto s_q g s_r^{-1}$ 
is an automorphism of $\bTrans(Q)$ as an algebraic variety. 
\end{proposition}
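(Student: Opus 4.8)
The plan is to use the special point $x$ to identify the abstract group $\Trans(Q)$ with the orbit $\bTrans(Q)=\Trans(Q)x\subseteq X$ and then transport every structure through this identification. Condition \ref{condition_good_action}(1) supplies the homomorphism $\Phi(pt)\colon\Trans(Q)\to\Trans(Q,X)$ sending a transvection word to the same word acting on $X$, and Condition \ref{condition_good_action}(2) (trivial stabilizer) makes $\Phi(pt)$ an isomorphism and the orbit map $\Trans(Q,X)\to\Trans(Q,X)x=\bTrans(Q)$ a bijection; composing gives a group bijection $\Trans(Q)\cong\bTrans(Q)$ under which $\bTrans(Q)$ is a locally closed subvariety of $X$ carrying the transported group law. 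Throughout I write $g_{\vec a}=s_{b_1}^{-1}\cdots s_{b_k}^{-1}s_{a_k}\cdots s_{a_1}$ for the transvection word attached to $\vec a=(a_1,\dots,a_k,b_1,\dots,b_k)\in Q^{2k}$.

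For (1), Proposition \ref{prop_inn_trans_opensub} gives injections $\Trans(U)\to\Trans(Q)$ and $\Inn(U)\to\Inn(Q)$. Applying Proposition \ref{prop_orbit_opensub} with $z_0=x$ yields $\Trans(U,X)x=\Trans(Q,X)x$, and since $x$ has trivial stabilizer in $\Trans(Q,X)\supseteq\Trans(U,X)$ this forces $\Trans(U,X)=\Trans(Q,X)$ inside $\Aut(X)$. Because $\Phi(pt)$ carries a word $g_{\vec a}$ to the same word on $X$, it restricts to a surjection $\Trans(U)\twoheadrightarrow\Trans(U,X)=\Trans(Q,X)=\Phi(pt)(\Trans(Q))$, and injectivity of $\Phi(pt)$ then gives $\Trans(U)=\Trans(Q)$. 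For the inner groups I would use the degree homomorphism $\Inn(\cdot)\to\bZ$ sending each $s_q\mapsto1$ (well defined through $\As(Q)\to\bZ$) with kernel $\Trans(\cdot)$, giving compatible extensions $1\to\Trans\to\Inn\to\bZ\to1$ for both $U$ and $Q$, whence $\Inn(U)\cong\Inn(Q)$ by the five lemma.

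Part (2) is the heart of the matter. For $N\geq 2\dim X$, Proposition \ref{prop_trans_orbit_in_x} shows that $e_N\colon Q^{2N}\to\bTrans(Q)$, $\vec a\mapsto g_{\vec a}\cdot x$, is a surjective morphism, while $\tilde\alpha\colon Q^{2N}\times Q\to Q$, $(\vec a,q)\mapsto g_{\vec a}(q)$, is a morphism (a composite of the regular maps $\rhd$ and $\rhd^{-1}$). If $e_N(\vec a)=e_N(\vec a')$ then, by the trivial stabilizer and injectivity of $\Phi(pt)$, $g_{\vec a}=g_{\vec a'}$ in $\Trans(Q)$, so $\tilde\alpha$ is constant along the fibres of $e_N\times\mathrm{id}_Q$; hence the tautological action $\bar\alpha\colon\bTrans(Q)\times Q\to Q$ is a well-defined set map with $\tilde\alpha=\bar\alpha\circ(e_N\times\mathrm{id}_Q)$. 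The hard step is to show $\bar\alpha$ is a morphism. I would first observe that $\bTrans(Q)$ is smooth: the elements of $\Trans(Q,X)\subseteq\Aut(X)$ restrict to variety-automorphisms of the orbit acting transitively, so the nonempty, translation-invariant smooth locus is everything. In characteristic $0$, generic smoothness (applied on the smooth locus of $Q^{2N}$) makes $e_N$ smooth over a dense open $V\subseteq\bTrans(Q)$; over $V$ one descends $\tilde\alpha$ along the smooth cover $e_N\times\mathrm{id}_Q$ to a morphism on $V\times Q$, and then spreads it over all of $\bTrans(Q)$ using the left-translation automorphisms and the equivariance $\bar\alpha(hf,q)=h(\bar\alpha(f,q))$. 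This makes $\bar\alpha$, hence $\alpha=(\mathrm{pr}_1,\bar\alpha)$, a morphism. Finally $\alpha$ is bijective with set-theoretic inverse $(f,q')\mapsto(f,f^{-1}(q'))$, and since $\bTrans(Q)\times Q$ is normal — here the hypothesis that $Q$ is normal enters, $\bTrans(Q)$ being smooth — a bijective morphism of varieties onto a normal variety in characteristic $0$ is an isomorphism by Zariski's Main Theorem, so $\alpha$ is an automorphism of a variety.

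With (2) in hand the remaining parts are formal, the main obstacle having been the descent of $\tilde\alpha$ along the surjection $e_N$. For (3), $\alpha$ is the tautological family in $\cTrans(Q)(\bTrans(Q))$; given $f\in\cTrans(Q)(S)$, applying the natural transformation $\Phi$ of Condition \ref{condition_good_action}(1) to the constant section $x$ produces the morphism $\beta_f\colon S\to\bTrans(Q)$, $s\mapsto\Phi(pt)(f_s)(x)$, and trivial-stabilizer uniqueness shows $f=\beta_f^*\alpha$ with $\beta_f$ the only such map, so $(\bTrans(Q),\alpha)$ represents $\cTrans(Q)$. For (4), the composition family $(f,g)\mapsto f\circ g$ over $\bTrans(Q)\times\bTrans(Q)$ and the inversion family $f\mapsto f^{-1}$ over $\bTrans(Q)$ lie in $\cTrans(Q)$ (they are morphisms by (2)), and by representability they are classified by morphisms, namely the transported multiplication and inverse; thus $\bTrans(Q)$ is an algebraic group, connected because it is irreducible as an image of $Q^{2N}$. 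For (5), the map $(g,p)\mapsto s_q\big(\bar\alpha(g,s_r^{-1}(p))\big)$ is a family in $\cTrans(Q)(\bTrans(Q))$ whose fibres are the transvections $s_qgs_r^{-1}$ of Proposition \ref{prop_homogeneous_quandle}, so it is classified by the morphism $\psi_{q,r}$, whose set-theoretic inverse $g\mapsto s_q^{-1}gs_r$ is classified likewise; hence $\psi_{q,r}$ is an automorphism of the variety $\bTrans(Q)$.
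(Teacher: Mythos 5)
Your proposal is correct and follows the same overall strategy as the paper: identify $\Trans(Q)$ with the orbit $\Trans(Q,X)x$ using the trivial stabilizer, prove (1) from Propositions \ref{prop_inn_trans_opensub} and \ref{prop_orbit_opensub}, and get (3)--(5) formally from (2) via representability. The one place where you package things differently is the key step (2): the paper forms the closure $T$ of the image of $(\ba,\bb,q)\mapsto(\tau_n(\ba,\bb),q,s_{\ba,\bb}(q))$ and proves a ZMT-type lemma (if $g\circ f$ is smooth surjective, $g$ birational, and source and target are normal, then $g$ is an isomorphism, proved via completions and Zariski's analytic irreducibility) to show the projection $T\to\bTrans(Q)\times Q$ is an isomorphism, whereas you descend the morphism $\tilde\alpha$ directly along the smooth locus of $e_N\times\mathrm{id}$ and then spread out by translations. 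Both hinge on the same ingredients (surjectivity of the evaluation map, generic smoothness in characteristic $0$, translation to cover all of $\bTrans(Q)$, and normality), so the difference is cosmetic; your fppf-descent step is the analogue of the paper's lemma and does need the brief justification that smooth surjections admit sections \'etale-locally (or that the descent datum holds scheme-theoretically because the relevant fibre products are reduced), which you assert rather than prove. Two minor slips: the quotient $\Inn(Q)/\Trans(Q)$ is cyclic but not necessarily isomorphic to $\bZ$, so your exact sequences $1\to\Trans\to\Inn\to\bZ\to1$ are not quite right as stated --- the paper's one-line argument that $\Inn(Q)=\bigcup_{n}\Trans(Q)s_q^{n}$ for a fixed $q\in U$ is cleaner and avoids this; and in (2) you should note explicitly (as you implicitly do via homogeneity) that $\bTrans(Q)$ is smooth so that $\bTrans(Q)\times Q$ is indeed normal before invoking ZMT. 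Neither affects the validity of the argument.
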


\begin{proof}[
Proposition \ref{prop_good_action}, \ref{prop_good_action_to_algebraicity} 
$\Rightarrow$ Theorem \ref{theorem_main}]
Assertions (1) and (2) are immediate consequences of the propositions 
and Corollary \ref{cor_open_orbit_is_connected}. 

It also follows that $\bTrans(Q)/\bTrans(Q)_q$ 
together with the operation induced by $\varphi_q$ 
is a quandle variety. 
Since the natural map $\bTrans(Q)/\bTrans(Q)_q\to \Inn(Q)q$ 
is a one-to-one regular map of nonsingular varieties 
over an algebraically closed field of characteristic $0$, 
it is an isomorphism of varieties. 
It is an isomorphism of quandles compatible with actions between orbits 
by Proposition \ref{prop_homogeneous_quandle}. 
Thus (3) holds. 
\end{proof}

Now let us prove the propositions. 

\begin{proof}[Proof of Proposition \ref{prop_good_action}]
(1) 
Note that $\Hom_{O^n}(k^{\dim Q}\times O^n, TO^n)$ 
can be identified with the fiber product of $\dim Q$ copies of $TO^n$ 
over $O^n$. 
Then the first assertion follows from the following functorial constructions. 
\begin{enumerate}
\item[(a)]
A family of quandle automorphisms on $Q\times S$ 
induces an automorphism of $O\times S$ over $S$. 
\item[(b)]
Let $f$ and $g$ be automorphisms of $V\times S$ and $W\times S$, 
respectively, over $S$. 
Then $f\times_S g$ is an automorphism of 
$(V\times S)\times_S(W\times S)\cong(V\times W)\times S$ 
over $S$. 
\item[(c)]
If $f$ is an automorphism of $V\times S$ with $V$ nonsingular, 
then $T_{V\times S/S}f$ is an automorphism of $TV\times S$ 
which commutes with $f$. 
\item[(d)]
Generalizing (b), 
if $V$ and $W$ come with morphisms to $Y$, 
and $f$ and $g$ are automorphisms of $V\times S$ and $W\times S$ over $S$ 
commuting with an automorphism $h$ of $Y\times S$ over $S$, 
then $f\times_S g$ restricts to an automorphism 
of $(V\times S)\times_{Y\times S} (W\times S)\cong(V\times_Y W)\times S$. 
\end{enumerate}

The second assertion follows from the first assertion. 
Note that the action of $Q$ on $Q$ gives an element of $\cAutq(Q)(Q)$, 
hence an element of $\cAut(X)(Q)$, i.e. 
an automorphism of $Q\times X$ over $Q$. 
This gives a family $\{s_{q, X}\}_{q\in Q}$ of automorphisms of $X$. 
The homomorphism $\Phi(pt): \Autq(Q)\to \Aut(X)$ 
sends $s_q$ to $s_{q, X}$, 
as we see by mapping a point to $q$ 
and using the naturality of $\Phi$. 
Thus we have an algebraic action of $Q$ on $X$ 
which factors through $\Autq(Q)$, 
hence through $\Inn(Q)$. 

(2)
Let 
\[
\mu: Q^n\times Q\to Q;\ ((q_1, \dots, q_n), q)\mapsto q_n\rhd\cdots\rhd q_1\rhd q 
\] 
be the multiplication map. 
Let $q_0$ be a point of $O$. 
By Proposition \ref{prop_orbit_in_q} 
and the assumption that $O$ is an open orbit, 
we see that 
\[
\mu_1: Q^n\to O;\ \bq\mapsto \mu(\bq, q_0)
\] 
is surjective for $n\gg 0$. 
Since we are working over a field of characteristic $0$, 
there exists a point $\bq_0\in O^n$ 
such that $T_{\bq_0}\mu_1$ is surjective. 
Define $\mu_2$ by 
\[
\mu_2: Q\to Q;\ q\mapsto \mu(\bq_0, q). 
\]
Let $q':=\mu(\bq_0, q_0)=\mu_1(\bq_0)=\mu_2(q_0)$, 
and choose a linear map $\varphi: k^d\to T_{\bq_0} O^n$ 
such that $T_{\bq_0}\mu_1\circ\varphi: k^d\to T_{q'}O$ is an isomorphism. 
Then $x:=((\bq_0, \varphi), q_0)$ defines a point of $X$. 
Now we claim that $\Autq(Q)_x$ is trivial. 

To show the claim, 
we first associate a vector field $\tilde{\bv}$ on $O$ 
to each $\bv\in k^d$. 
This is what ``rigidifies'' $Q$. 
Informally, this is defined by 
\[
\mu(\bq_0 + \epsilon \varphi(\bv), q)
=
\mu(\bq_0, q)
+ \epsilon\tilde{\bv}(\mu(\bq_0, q))
= \mu_2(q) + \epsilon\tilde{\bv}(\mu_2(q)). 
\]
Since $\mu_2$ is an automorphism, 
this defines a vector field on $O$. 
Formally,  
we associate to any $\bv\in k^d$ 
the constant vector field $(\varphi(\bv), 0)$ 
to $O^n\times O$ along $O':=\bq_0\times O$. 
By mapping it by the homomorphism $d\mu: T(O^n\times O)\to \mu^*(TO)$, 
we obtain a section of $\mu^*(TO)|_{O'}$. 
Since $\mu|_{Q'}=\mu_2$ is an isomorphism, 
it defines a vector field $\tilde{\bv}$ on $O$. 
From $\tilde{\bv}(q')=T_{\bq_0}\mu_1\circ\varphi(\bv)$, 
it follows that $\tilde{\be}_1, \dots, \tilde{\be}_d$ form 
a local frame of $TO$ at $q'$. 

Now let $f\in \Autq(Q)$ be such that $f(x)=x$, 
i.e., $f(\bq_0)=\bq_0$, $T_{\bq_0}f\circ\varphi=\varphi$ and $f(q_0)=q_0$. 
Since $f$ is a quandle homomorphism, we have 
\begin{eqnarray*}
f(\mu(\bq_0 + \epsilon \varphi(\bv), q))
& = &
f(\mu(\bq_0, q)
+ \epsilon\tilde{\bv}(\mu(\bq_0, q))) \\
& = & 
f(\mu_2(q))
+ \epsilon (T_{\mu_2(q)}f)(\tilde{\bv}(\mu_2(q))) 
\end{eqnarray*}
and 
\begin{eqnarray*}
f(\mu(\bq_0 + \epsilon \varphi(\bv), q))
& = &
\mu(f(\bq_0 + \epsilon \varphi(\bv)), f(q)) \\
& = & 
\mu(f(\bq_0) + \epsilon T_{\bq_0}f(\varphi(\bv)), f(q)) \\
& = & 
\mu(\bq_0 + \epsilon \varphi(\bv), f(q)) \\
& = & 
\mu(\bq_0, f(q)) + \epsilon \tilde{\bv}(\mu(\bq_0, f(q))) \\ 
& = & 
f(\mu_2(q)) + \epsilon \tilde{\bv}(f(\mu_2(q))), 
\end{eqnarray*}
hence 
\[
(T_{\mu_2(q)}f)(\tilde{\bv}(\mu_2(q))) = \tilde{\bv}(f(\mu_2(q))). 
\]
Since $\mu_2$ is an automorphism, we conclude that  
$f_*(\tilde{\bv})=\tilde{\bv}$. 
From $f(q')=q'$, 
$f_*(\tilde{\be}_i)=\tilde{\be}_i$ for $i=1, \dots, d$ 
and the fact that $\tilde{\be}_1, \dots, \tilde{\be}_d$ form a local frame at $q'$, 
it follows that $f$ is the identity morphism on the formal neighborhood of $q'$, 
hence on $Q$. 
\end{proof}

\begin{proof}[Proof of Proposition \ref{prop_good_action_to_algebraicity}]
(1)
By Proposition \ref{prop_inn_trans_opensub}, 
we have injective homomorphisms 
$\Inn(U)\to\Inn(Q)$ and $\Trans(U)\to\Trans(Q)$. 
One can identify $\Trans(U)$ and $\Trans(Q)$
with $\Trans(U, X)x$ and $\Trans(Q, X)x$, 
and they are equal by Proposition \ref{prop_orbit_opensub}. 
So $\Trans(U)\cong \Trans(Q)$. 
Since $\Inn(Q)$ is generated by $\Trans(Q)$ and any one of $s_q$, 
we have $\Inn(U)\cong\Inn(Q)$. 

\medskip
(2) 
Write $G$ for $\bTrans(Q)$. 
For a positive integer $n$, 
write $\ba$ for an $n$-ple $(a_1, \dots, a_n)$, etc., and 
\[
s_{\ba, \bb} = 
s_{b_1}^{-1}\circ\cdots\circ s_{b_n}^{-1}
\circ s_{a_n}\circ\cdots s_{a_1}
\]
for the automorphism of $G$ (as a variety) or $Q$, 
depending on the context. 
Define 
\[
\tau_n: Q^{2n}\to G;
\ (\ba, \bb)\mapsto s_{\ba, \bb}(x) 
\]
and 
\[
\alpha_n: Q^{2n}\times Q\to G\times Q\times Q; 
\ (\ba, \bb, q)
\mapsto 
(\tau_n(\ba, \bb), q, s_{\ba, \bb}(q)). 
\]
By Proposition \ref{prop_trans_orbit_in_x}, 
$\tau_n$ is surjective for $n\gg 0$. 
It follows that $\Img\ \alpha_n$ is the graph of the action map 
under the identification of $\Trans(Q)$ and $G$. 
We show that $\Img\ \alpha_n$ is a subvariety 
and that the action map is a regular map. 

Note that one can always replace $n$ by a larger number. 
We use the following two lemmas. 

\begin{lemma}
For $n\gg 0$ and any $g\in G$, 
there exists a point $(\ba, \bb)\in (Q_\sm)^{2n}$ 
such that $\tau_n(\ba, \bb)=g$ and $T_{(\ba, \bb)}\tau_n$ is surjective. 
\end{lemma}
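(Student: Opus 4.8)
The plan is to produce a single point at which $\tau_n$ is submersive and lands in the smooth locus, and then transport it into an arbitrary fibre using the transitive action, while controlling smoothness of the transported point. I would begin by recording two structural facts. Since $s_a\colon Q\to Q$ is the restriction of the variety automorphism $(q,r)\mapsto(q,q\rhd r)$ to $\{a\}\times Q$, both $s_a$ and $s_a^{-1}$ are automorphisms of the variety $Q$ and hence preserve the smooth locus $Q_\sm$; in particular $Q_\sm$ is an open dense subquandle of $Q$. The same reasoning shows every element of $\Trans(Q,X)$ acts on $X$ by a variety automorphism and therefore restricts to an automorphism of the orbit $G=\Trans(Q)x$. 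As $\Trans(Q,X)$ acts transitively on $G$ by such automorphisms, the smooth locus of $G$ (nonempty, since $G$ is reduced and we are in characteristic $0$) is $\Trans(Q,X)$-invariant and hence all of $G$; thus $G$ is smooth.

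Next I fix a base point. By Proposition \ref{prop_trans_orbit_in_x} the map $\tau_{n_0}$ is surjective onto $G$ for $n_0\gg 0$, so it is dominant; since $G$ is smooth and the field has characteristic $0$, $T\tau_{n_0}$ is surjective on a dense open subset of $Q^{2n_0}$. I choose $(\ba_0,\bb_0)$ in the intersection of this set with the dense open $(Q_\sm)^{2n_0}$, and set $g_0=\tau_{n_0}(\ba_0,\bb_0)$.

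Now I transport $g_0$ to an arbitrary $g\in G$. Applying Proposition \ref{prop_trans_orbit_in_x} to the quandle variety $Q_\sm$ and then Proposition \ref{prop_orbit_opensub}, there is a \emph{uniform} $m_0$, independent of $g$, with $\Trans_{m_0}(Q_\sm,X)g_0=\Trans(Q,X)g_0=G$; hence for each $g$ there exist $(\pmb{c},\pmb{d})\in(Q_\sm)^{2m_0}$ with $h:=s_{\pmb{c},\pmb{d}}$ satisfying $h(g_0)=g$. Gathering the inverse symbols to the right by Lemma \ref{lem_trans}(2), the composite $s_{\pmb{c},\pmb{d}}\circ s_{\ba_0,\bb_0}$ is rewritten as $s_{\rho(\ba_0,\bb_0)}$ for a regular map $\rho\colon Q^{2n_0}\to Q^{2(n_0+m_0)}$ (depending on $\pmb{c},\pmb{d}$), so that $\tau_{n_0+m_0}\circ\rho=h\circ\tau_{n_0}$. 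Differentiating at $(\ba_0,\bb_0)$ gives $T\tau_{n_0+m_0}\circ T\rho=Th\circ T\tau_{n_0}$; since $Th$ is an isomorphism and $T\tau_{n_0}$ is surjective, $T\tau_{n_0+m_0}$ is surjective at $\rho(\ba_0,\bb_0)$, a point lying over $g$. Taking $n=n_0+m_0$, which is uniform in $g$, then proves the lemma.

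The main obstacle is keeping the transported point inside $(Q_\sm)^{2n}$: a priori the rewriting of Lemma \ref{lem_trans}(2) scrambles the indices and could push them into the singular locus. This is exactly where the first structural fact is used — each rewriting step $s_e^{-1}s_f=s_{s_e^{-1}(f)}s_e^{-1}$ alters an index only by applying one of the variety automorphisms $s_e^{\pm 1}$, which preserve $Q_\sm$; starting from smooth indices $\ba_0,\bb_0,\pmb{c},\pmb{d}$, every index of $\rho(\ba_0,\bb_0)$ is therefore smooth. The secondary delicate point is the uniformity of $n$, which is why I bound the length $m_0$ of the transporting transvection through Proposition \ref{prop_trans_orbit_in_x} applied to $Q_\sm$ rather than invoking bare transitivity of the action.
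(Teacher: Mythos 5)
Your proof is correct and follows essentially the same route as the paper's: pick a base point in $(Q_\sm)^{2n_0}$ where $T\tau_{n_0}$ is surjective, transport it into an arbitrary fibre by a transvection with smooth indices, and use the rewriting of Lemma \ref{lem_trans}(2), observing that each rewriting step only applies the variety automorphisms $s_e^{\pm 1}$ to indices and hence preserves $Q_\sm$. The only (harmless) divergence is the final uniformity step, where you bound the length of the transporting transvection via Proposition \ref{prop_trans_orbit_in_x} applied to $Q_\sm$ instead of the paper's appeal to openness of the smooth locus and noetherian induction.
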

\begin{proof}
By the surjectivity of $\tau_n$, 
there exists $(\ba_0, \bb_0)\in (Q_\sm)^{2n}$ 
such that $T_{(\ba_0, \bb_0)}\tau_n$ is surjective. 
Let $g_0=\tau_n(\ba_0, \bb_0)$. 
Since $G$ is a $\Trans(Q)$-orbit 
and $\Trans(Q)=\Trans(Q_\sm)$ 
by Proposition \ref{prop_good_action_to_algebraicity} (1), 
there exist $n'\in\bN$ and $(\ba_1, \bb_1)\in (Q_\sm)^{2n'}$ 
such that $s_{\ba_1, \bb_1}(g_0)=g$. 
Let $\tau': Q^{2n+2n'}\to G$ be defined by 
$\tau'(\ba, \bb, \ba', \bb')=s_{\ba', \bb'}\circ s_{\ba, \bb}(x)$. 
Then $\tau'(\ba_0, \bb_0, \ba_1, \bb_1)=g$ 
and $T_{(\ba_0, \bb_0, \ba_1, \bb_1)}\tau'$ is surjective. 

Using $s_a^{-1}\circ s_b = s_{a\rhd^{-1} b}\circ s_a^{-1}$, 
we see that 
there exists an automorphism $f$ of $Q^{2n+2n'}$ as a variety 
such that $\tau'=\tau_{n+n'}\circ f$. 
Thus $T\tau_{n+n'}$ is surjective at some smooth point over $g$. 

By openness of the smooth locus and noetherian induction, 
one can find $n$ that works for any $g\in G$. 
\end{proof}

\begin{lemma}
Let $S$ and $U$ be normal varieties, $T$ a variety, 
and $f: S\to T$ and $g: T\to U$ morphisms. 
Assume that $g\circ f$ is smooth and surjective 
and that $g$ is birational. 
Then $g$ is an isomorphism. 
\end{lemma}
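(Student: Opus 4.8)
The plan is to deduce the statement from Zariski's Main Theorem, the essential point being to show that $g$ is quasi-finite. First I would record the easy half: since $g\circ f$ is smooth and surjective it is in particular surjective, so $g$ is surjective, and $f$ is dominant (if $f$ were not dominant, $\overline{f(S)}$ would be a proper closed subvariety and $g(\overline{f(S)})$ would have dimension $<\dim U$, contradicting surjectivity of $g\circ f$). Granting quasi-finiteness, the separated birational morphism $g$ factors, by Zariski's Main Theorem, as an open immersion into a variety finite over $U$; since $U$ is normal and the finite part is again birational, that finite part is an isomorphism, so $g$ itself is an open immersion. Being also surjective, $g$ is then an isomorphism.

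For quasi-finiteness the key input is differential. Because $g\circ f$ is smooth, its differential $(d(g\circ f))_s=(dg)_{f(s)}\circ(df)_s$ is surjective at every $s\in S$, whence $(dg)_t\colon T_tT\to T_{g(t)}U$ is surjective for every $t\in f(S)$. On the other hand, surjectivity of $g\circ f$ forces every fibre $g^{-1}(u)$ to meet $f(S)$, since $(g\circ f)^{-1}(u)=f^{-1}(g^{-1}(u))$ is nonempty. Thus every fibre of $g$ contains a point $t$ at which $(dg)_t$ is surjective. As $g$ is birational we have $\dim T=\dim U$, so at a smooth point $t$ of $T$ lying over a smooth point of $U$ this surjectivity makes $(dg)_t$ an isomorphism, i.e.\ $g$ is \'etale there; an \'etale, generically injective morphism onto the normal variety $U$ is a local isomorphism, so the fibre through such a $t$ is $0$-dimensional. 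Propagating this to all fibres would yield that $g$ is quasi-finite.

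The main obstacle is exactly this last propagation: the differential argument only controls points of $f(S)$ and is cleanest at smooth points of $T$ and $U$, whereas quasi-finiteness is a condition on all of $T$, including singular points and points outside $f(S)$. Ruling out a positive-dimensional fibre $Z=g^{-1}(u)$ is the crux, since one must show that the smooth fibre $f^{-1}(Z)=(g\circ f)^{-1}(u)$, of pure dimension $\dim S-\dim U$, cannot coexist with $\dim Z>0$. A clean way to organize this is faithfully flat descent: as $g\circ f$ is smooth and surjective it is an fppf cover, and being an isomorphism is fppf-local on the base, so it suffices to prove that the base change $g_S\colon T\times_U S\to S$ is an isomorphism. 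Here $g_S$ is birational, $\mathrm{pr}_T\colon T\times_U S\to T$ is smooth surjective (a base change of $g\circ f$), and the tautological section $\tilde f=(f,\mathrm{id})\colon S\to T\times_U S$, a closed immersion as a section of a separated morphism, exhibits $S$ as a closed subvariety of full dimension $\dim S=\dim(T\times_U S)$; identifying $\tilde f(S)$ with all of $T\times_U S$ — the step where normality of $S$ and $U$ is really used — then gives that $g_S$, hence $g$, is an isomorphism.
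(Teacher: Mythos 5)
Your reduction steps are fine: granting quasi-finiteness, the appeal to Zariski's Main Theorem plus normality of $U$ does give that $g$ is an open immersion, hence (being surjective) an isomorphism; and the fppf-descent reduction to $g_S\colon T\times_U S\to S$ is legitimate, since being an isomorphism descends along the fppf cover $g\circ f$. But in both variants you stop exactly at the decisive step, so the proposal has a genuine gap. In the first route, quasi-finiteness of $g$ is never established: the differential argument only sees points of $f(S)$ (and $f$ need not be surjective onto $T$ --- only $g\circ f$ onto $U$ is assumed surjective --- so a fibre of $g$ could a priori have a positive-dimensional component disjoint from $f(S)$), and at a singular point $t$ of $T$ surjectivity of $(dg)_t$ on the possibly too-large Zariski tangent space implies nothing about the fibre dimension. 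In the second route, the identification of $\tilde f(S)$ with all of $T\times_U S$ is asserted rather than proved, and the remark that this is ``where normality is really used'' is not an argument --- indeed normality is not what makes that step work. (That step can actually be completed: $\mathrm{pr}_T\colon T\times_U S\to T$ is flat, so every irreducible component of $T\times_U S$ dominates $T$ and hence meets the open locus lying over a dense open $U_0\subseteq U$ with $g^{-1}(U_0)\cong U_0$; that locus is isomorphic to the irreducible $(g\circ f)^{-1}(U_0)$, which forces $T\times_U S$ to be irreducible, and it is reduced because it is smooth over the reduced $T$; since $\tilde f(S)$ is closed of dimension $\dim S=\dim(T\times_U S)$, the closed immersion $\tilde f$ is surjective onto an integral scheme, hence an isomorphism. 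But this is precisely the content that has to be written down.)

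For comparison, the paper closes the gap by a completed-local-ring argument that sidesteps both of your obstacles. It passes to the normalization $T'$ of $T$ and, for $u\in U$ and $s\in S$ over $u$, uses smoothness of $g\circ f$ to identify $\hat{\cO}_{S,s}$ with a power series ring $\hat{\cO}_{U,u}[[h_1,\dots,h_d]]$; killing the $h_i$ yields a retraction $\psi\colon\hat{\cO}_{T',f'(s)}\to\hat{\cO}_{U,u}$ of $\varphi\colon\hat{\cO}_{U,u}\to\hat{\cO}_{T',f'(s)}$. Normality of $S$, $T'$ and $U$ together with Zariski's analytic irreducibility theorem makes all these completions integral domains of dimension $\dim U$, so the surjection $\psi$ is injective and $\varphi$ is an isomorphism; thus $g'$ is \'etale along $f'(S)$. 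The \'etale locus of $g'$ is open, surjects onto $U$, and being \'etale and birational onto the normal $U$ is an open immersion, hence an isomorphism; this forces $T'=T$ and $g$ to be an isomorphism. This is exactly the ``propagation'' you were missing: \'etaleness is obtained at a point of every fibre without any smoothness hypothesis on $T$, and openness of the \'etale locus plus birationality then takes care of the rest of $T$, including the points outside $f(S)$.
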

\begin{proof}
Let $u\in U$ be an arbitrary point, 
and choose $s\in S$ such that $g(f(s))=u$. 
Let $T'$ be the normalization of $T$ 
and $f': S\to T'$ and $g': T'\to U$ the induced morphisms. 

We claim that $g'$ is \'etale at $f'(s)$. 
We have homomorphisms 
\[
\hat{\cO}_{U, u}\to \hat{\cO}_{T', f'(s)}\to \hat{\cO}_{S, s}. 
\]
Since $g\circ f$ is smooth, 
$\hat{\cO}_{S, s}$ can be identified with 
the power series ring over 
$\hat{\cO}_{U, u}$ with generators $h_1, \dots, h_d$. 
So we have homomorphisms 
\[
\hat{\cO}_{U, u}\overset{\varphi}{\to} \hat{\cO}_{T', f'(s)}
\overset{\psi}{\to} \hat{\cO}_{U, u}\cong \hat{\cO}_{S, s}/(h_1, \dots, h_d), 
\]
where $\psi\circ\varphi=id$. 
Then $\psi$ is surjective. 
Since $S, T'$ and $U$ are normal, 
all these rings are integral domains of dimension $\dim U$(\cite{Z1948}), 
so $\psi$ is also injective. 
It follows that $\varphi$ is an isomorphism, hence the claim. 

This means that $f'(S)$ is contained in the \'etale locus $V$ of $g'$, 
so $g'|_V$ is surjective. 
By \cite[Lemma 18.10.18]{Gr1967}, $g'|_V$ is an open immersion. 
Thus it is an isomorphism. 
It follows that $V=T'$ and $T=T'$, 
so $g$ is an isomorphism. 
\end{proof}

Let us return to the proof of (2). 
Let 
$S=Q^{2n}\times Q$, $T=\overline{\Img\ \alpha_n}$,  $U=G\times Q$, 
$f=\alpha_n$ and $g=(p_1\times p_2)|_T$, 
where $p_i$ is the projection from $G\times Q\times Q$ 
to the $i$-th factor. 
The restriction of $g$ to $\Img\ \alpha_n$ is bijective 
and $\Img\ \alpha_n$ is a dense constructible subset of $T$. 
This implies that $g: T\to U$ is birational, 
since we are working in characteristic $0$. 
We also know that any point of $U$ is in the image 
of the smooth locus of $g\circ f$. 

By the previous lemma, 
$g$ is an isomorphism. 
It also follows that $\Img\ \alpha_n$ is equal to $T$. 
Thus the action map, which is equal to $p_3\circ g^{-1}: G\times Q\to Q$, 
is a morphism. 
We can see that $\alpha^{-1}$ is also a morphism by the same arguments. 

(3)
If $f: S\times Q\to S\times Q$ is an algebraic family of transvections, 
then it induces an automorphism $\Phi(S)(f): S\times X\to S\times X$ 
over $S$ by Condition \ref{condition_good_action} (1). 
By sending the constant section $S\times \{x\}$ by this morphism 
and projecting to $X$, 
we obtain a morphism $\phi_f: S\to X$. 
It is straightforward to see that its image is contained in $\bTrans(Q)$ 
and that the pullback of $\alpha$ by $\phi_f$ is equal to $f$. 

(4)
This follows from (3) since, as is well known, 
an object representing a group functor 
is a group object in a natural way. 
For example, to show that the multiplication map is a morphism, 
let $\alpha_1, \alpha_2\in \Aut(\bTrans(Q)\times \bTrans(Q)\times Q)$ 
be defined by $(g_1, g_2, q)\mapsto (g_1, g_2, g_1q)$ 
and $(g_1, g_2, q)\mapsto (g_1, g_2, g_2q)$. 
Then $\alpha_1\circ\alpha_2$ is an element 
of $\cTrans(Q)(\bTrans(Q)\times\bTrans(Q))$, 
so it corresponds to a morphism $\bTrans(Q)\times\bTrans(Q)\to\bTrans(Q)$, 
which is nothing but the multiplication map. 

(5)
We have an element 
$(id\times s_q)\circ\alpha\circ (id\times s_r^{-1})$ 
of $\cTrans(Q)(\bTrans(Q))$, 
so it induces a morphism $\bTrans(Q)\to\bTrans(Q)$. 
\end{proof}

\begin{remark}
The group $\Inn(Q)$ is not necessarilly an algebraic group. 
For example, if $Q=\bA^1$ and 
$x\rhd y=x+a(y-x)$ with $a\not=0$, 
then $Q$ is $\rhd$-connected if and only if $a\not= 1$. 
It is easy to see that $\Inn(Q)$ consists of automorphisms 
$x\mapsto a^n x+b$ ($n\in\bZ, b\in k$), 
so it does not have a natural structure of an algebraic group 
unless $a$ is a root of unity. 
\end{remark}

As for non-$\rhd$-connected quandles, 
the following is known 
in the setting of differential geometry. 
\begin{theorem}[\cite{Loos1967}]\label{theorem_loos}
Let $(M, \cdot)$ be a reflexion space. 
Then there exists a regular $\varphi$-space $(G, H, \varphi)$ 
with $\varphi$ involutive 
and a manifold $F$ with an $H$-action 
such that $(M, \cdot)$ is isomorphic to $G\times_H F$ 
with the operation induced from $\cdot_\varphi$. 
\end{theorem}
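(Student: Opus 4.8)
The plan is to realize $(M,\cdot)$ as a bundle of symmetric spaces over a single symmetric space, with the transvection group as structure group. Fix a base point $o\in M$ and let $G:=\Trans(M)$. As in the theory of regular $s$-manifolds (Theorem \ref{theorem_kowalski}), the first step is to equip $G$ with the structure of a connected Lie group acting smoothly on $M$; here the reflexivity (4R) is what makes the $G$-orbits behave like symmetric spaces. I would then set $\varphi:=\varphi_o$, the conjugation $g\mapsto s_o g s_o^{-1}$ of Proposition \ref{prop_homogeneous_quandle}, and $H:=\Stab_G(o)=G_o$. Since $s_o\circ s_o=\mathrm{id}_M$ by (4R), we get $\varphi^2=\mathrm{id}$, so $\varphi$ is involutive, exactly as the statement requires.

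Next I would identify the orbit $S:=G\cdot o$ as a symmetric space. The inclusion $H\subseteq G^\varphi$ is immediate from Lemma \ref{lem_trans}(1): if $g(o)=o$ then $gs_o=s_{g(o)}g=s_og$, whence $\varphi(g)=g$. For the reverse inclusion $(G^\varphi)^\circ\subseteq H$ and for regularity I would pass to the Lie algebra, decomposing $\mathfrak{g}=\mathfrak{g}^+\oplus\mathfrak{g}^-$ into the $(\pm1)$-eigenspaces of $d\varphi$, and correspondingly $T_oM=T_o^+\oplus T_o^-$ under $ds_o$. Writing $t_o(y):=y\rhd o$, the identity $T_o t_o+T_o s_o=1$ (from $r\rhd r=r$, cf. the Corollary to Proposition \ref{prop_connectedness_criteria}) shows $\mathrm{im}(1-ds_o)=T_o^-$, and since every such direction is realized by a transvection $s_a s_o$ we get $T_oS=T_o^-$. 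A dimension count then forces $\mathfrak{g}^+=\mathfrak{h}$, giving $(G^\varphi)^\circ\subseteq H$, while $T_{\bar e}\bar\varphi-1=-2$ on $T_oS$ is invertible. Thus $(G,H,\varphi)$ is a regular involutive $\varphi$-space and, by Proposition \ref{prop_homogeneous_quandle}, $S\cong(G/H,\rhd_\varphi)$ is a symmetric space.

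The key structural ingredient is the $G$-equivariant map $\theta:=\bar s\colon M\to(G,\rhd'_\varphi)$, $x\mapsto s_x s_o^{-1}=s_x s_o$, of Remark \ref{rem_vedernikov_hom}. Letting $G$ act on itself by $g\cdot a=ga\varphi(g^{-1})$ as in Example \ref{ex_vedernikov}, the relation $s_{g(x)}=gs_xg^{-1}$ yields $\theta(g\cdot x)=gs_xg^{-1}s_o=g\cdot\theta(x)$, while $\theta(x)\varphi(\theta(x))=(s_xs_o)(s_os_x)=s_x s_x=e$, so $\theta$ lands in the involutive locus $\{a:a\varphi(a)=e\}$. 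By Example \ref{ex_vedernikov} the orbit of such an $a$ is $(G/G^{\varphi_a},\rhd_{\varphi_a})$ with $\varphi_a^2=\mathrm{conj}(a\varphi(a))=\mathrm{id}$, hence each $G$-orbit in $M$ is itself a symmetric space, and these orbits are the leaves of the $G$-action. I would then take $F$ to be a $G$-slice through $o$, that is, an $H$-invariant locally closed submanifold with $T_oF=T_o^+$ meeting each orbit; the residual $H=G_o$ acts on $F$, and the orbit map induces $G\times_H F\to M$, $[g,f]\mapsto g\cdot f$, which is $G$-equivariant and bijective. Transporting $\rhd$ through this map recovers the operation induced from $\rhd_\varphi$ on the base $G/H$, establishing the claimed isomorphism.

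The hard part will be the construction of $F$ and the proof that $G\times_H F\to M$ is a diffeomorphism, i.e.\ that $M$ is genuinely an associated bundle rather than a mere pointwise union of symmetric-space orbits. Concretely, I must produce a global $H$-invariant transversal meeting every orbit in a single $H$-orbit and prove local triviality of the orbit foliation; this is a slice/tube theorem for the a priori non-proper $G$-action, and it is exactly here that one controls how the isotropy $G_x$ and the transverse data vary from leaf to leaf. Equivalently, one integrates the $G$-invariant distribution $x\mapsto T_x^+$ of symmetry-fixed vectors and shows that its leaf space is the symmetric space $S$ with the fibers as leaves. A second, logically prior difficulty is the Lie group structure on $\Trans(M)$ for a merely reflexive (non-regular) space; I would obtain it by rigidifying each orbit with its canonical connection, in the spirit of the differential-geometric argument recalled after Theorem \ref{theorem_main}.
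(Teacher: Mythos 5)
This statement is quoted from Loos's \emph{Spiegelungsr\"aume} paper and is not proved in the text, so there is no internal argument to compare against; I can only judge your outline on its own terms. Your general strategy is the right one --- the transvection group $G=\Trans(M)$, the involution $\varphi=\varphi_o$, and above all the equivariant map $\theta=\bar s\colon x\mapsto s_xs_o$ into $(G,\rhd'_\varphi)$ --- but there is one concrete structural error, and the steps you defer are exactly the substance of the theorem.

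The error is the choice $H=G_o$. For $G\times_H F\to M$, $[g,f]\mapsto g\cdot f$, to be injective you need $G_f\subseteq H$ for \emph{every} $f\in F$, equivalently $H$ must be the full setwise stabilizer of $F$ in $G$. The correct fiber is $F=\theta^{-1}(e)=\{x\in M: s_x=s_o\}$, and equivariance of $\theta$ gives $\Stab_G(F)=G^\varphi$ exactly: if $gx\in F$ for some $x\in F$ then $e=\theta(gx)=g\varphi(g^{-1})$, so $g\in G^\varphi$, and conversely $G^\varphi$ preserves $F$. In general $G^\varphi$ strictly contains $G_o$ (any transvection commuting with $s_o$ but moving $o$ inside its fiber lies in $G^\varphi\setminus G_o$); already for $M=G_0\times_K F_0$ with $K$ acting nontrivially on $F_0$, points of $F$ have different stabilizers and $G\times_{G_o}F\to M$ fails to be injective. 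So you must take $H=G^\varphi$; this still satisfies $(G^\varphi)^\circ\subseteq H\subseteq G^\varphi$, and regularity is then automatic since $\bar\varphi-1=-2$ on $\fg/\mathrm{Lie}(G^\varphi)$. Relatedly, the appeal to a slice/tube theorem for a possibly non-proper action is a misdirection: $F$ is not a transversal produced by properness but simply the fiber of $x\mapsto s_x$, and its properties come from equivariance of $\theta$. What remains --- and what you flag but do not do --- is (i) the Lie group structure on $\Trans(M)$ with smooth action, (ii) the proof that $x\mapsto s_x$ is a submersion whose image is a \emph{single} twisted orbit $G\cdot e\cong G/G^\varphi$ (this is where connectedness and the reflexion axiom enter, and it is what gives $GF=M$), and (iii) the inclusion $T_o(G\cdot o)\subseteq T_o^-$, of which you prove only the reverse. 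These constitute the bulk of Loos's argument, so as written the proposal is an outline with the wrong structure group rather than a proof.
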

Note that the orbits are $\rhd$-connected in this case. 
It might be possible to obtain 
a similar result for quandle varieties 
with this property.

\section{Questions related to $\rhd$-connectedness}

Kowalski asked the following problem. 
\begin{problem}(\cite[Problem II.47]{Kowalski1980})
Can one obtain any reasonable theory of 
``regular $s$-manifolds'' starting from the following conditions? 
\begin{enumerate}
\item
$x\cdot x=x$. 
\item
$s_x: M\to M; y\mapsto x\cdot y$ is a diffeomorphism. 
\item
$x\cdot(y\cdot z)=(x\cdot y)\cdot(x\cdot z)$. 
\item[(4I)]
$x$ is an isolated fixed point of $s_x$. 
\end{enumerate}
\end{problem}

We give a partial answer to its analogue in algebraic geometry. 

\begin{proposition}
Let $Q$ be a quandle variety 
such that $q$ is an isolated fixed point of $s_q$ for any $q\in Q$. 
Then the following hold. 

(1)
$Q\rhd q$ is dense for a general point $q$, 
hence $Q$ contains an open orbit $O$. 

(2)
For a point $q\in O$, 
the stabilizer $\bTrans(O)_q$ contains $(\bTrans(O)^{\varphi_q})^\circ$, 
so $O$ is an algebraic $\varphi$-space. 

(3)
The quandle homomorphism $\bar{s}: Q\to (\bTrans(Q), \rhd'_{\varphi_q}); x\mapsto s_xs_q^{-1}$ 
of Remark \ref{rem_vedernikov_hom} 
is quasi-finite. 
\end{proposition}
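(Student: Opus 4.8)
The plan is to treat part (1) separately from parts (2) and (3): the first produces the open orbit from the isolated-fixed-point hypothesis and is where the real work lies, while the latter two extract the group-theoretic conclusions once the machinery of Theorem \ref{theorem_main} is available on the open orbit. For (1) I would first reduce the assertion to the dominance of the map
\[
t_q\colon Q\to Q;\quad r\mapsto r\rhd q,
\]
whose image is exactly $Q\rhd q$. Indeed, if $t_q$ is dominant for a general $q$, then $\overline{Q^1 q}=Q$, hence $\dim Q^1 q=\dim Q^2 q=\dim Q$, so Lemma \ref{lem_closed_orbit_in_q} gives $\overline{\Inn(Q)q}=\overline{Q^1 q}=Q$; the orbit of $q$ is then dense, and Corollary \ref{cor_open_orbit_is_connected} shows it is open and $\rhd$-connected. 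To analyze $t_q$ I would use that $t_q(q)=q$ and that, differentiating the identity $r\rhd r=r$ along the diagonal, $T_q t_q=1-T_q s_q$; thus $t_q$ is dominant as soon as $1-T_q s_q$ is invertible, i.e. as soon as $q$ is a \emph{reduced} isolated fixed point of $s_q$. Equivalently, writing $\Sigma=\{(a,b)\in Q\times Q\mid a\rhd b=b\}$ for the fixed-point locus of the automorphism $(a,b)\mapsto(a,a\rhd b)$ and $\Delta$ for the diagonal, the hypothesis forces $\Delta$ to be an irreducible component of $\Sigma$, and what is needed is that the scheme-theoretic $\Sigma$ be generically reduced along $\Delta$.

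The main obstacle is precisely this reducedness. Isolation of the fixed point does not by itself bound $\dim\ker(1-T_q s_q)$: an automorphism may have an isolated but degenerate fixed point (so that $\Sigma$ is non-reduced along $\Delta$), and then $t_q$ need not be dominant at $q$. I would try to exclude this using the quandle axioms together with generic smoothness in characteristic $0$: since $t_{gq}=g\,t_q\,g^{-1}$ for $g\in\Inn(Q)$, the degeneracy locus is a union of orbits, hence constructible, so it suffices to rule out degeneracy at a single general point; one then differentiates the distributive law to constrain the $1$-eigenspace of $T_q s_q$ and argues that a persistent $1$-eigendirection would, by the implicit function theorem in characteristic $0$, propagate the fixed point in a family and contradict isolation. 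Alternatively, if one can show that the fibres $\{b\mid q\rhd b=b\}$ are finite for general $q$, then $\dim\Sigma=\dim Q$ and the second projection $\Sigma\to Q$ has finite general fibres, whence $t_q^{-1}(q)$ is finite and $t_q$ is dominant; establishing this finiteness is the crux.

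Parts (2) and (3) are then short. For (2), set $G=\bTrans(O)$, which by Theorem \ref{theorem_main} is a connected algebraic group acting algebraically on the $\rhd$-connected orbit $O$, with $O\cong G/G_q$ and $G_q\subseteq G^{\varphi_q}$, where $\varphi_q(g)=s_q g s_q^{-1}$ and $G^{\varphi_q}$ is the centralizer of $s_q$. For $g\in G^{\varphi_q}$ one computes $s_q(gq)=(s_q g s_q^{-1})(s_q q)=gq$, so $(G^{\varphi_q})^\circ q$ is a connected subset of $\{r\in O\mid s_q(r)=r\}$ containing $q$; since $q$ is an isolated fixed point of $s_q$, this connected set is $\{q\}$, giving $(G^{\varphi_q})^\circ\subseteq G_q$. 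Combined with $G_q\subseteq G^{\varphi_q}$ this says exactly that $O=G/G_q$ is an algebraic $\varphi$-space.

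For (3), the assignment $(x,y)\mapsto s_x s_q^{-1}(y)=x\rhd(q\rhd^{-1}y)$ is a regular family of transvections over $Q$, hence by the representability in Theorem \ref{theorem_main} defines a morphism $\bar s\colon Q\to\bTrans(Q)$. To see it is quasi-finite, fix $g$ and suppose the fibre is nonempty, say $s_{x_0}s_q^{-1}=g$; then the fibre equals $\{x\mid s_x=s_{x_0}\}$. If $s_x=s_{x_0}$ then $\mathrm{Fix}(s_x)=\mathrm{Fix}(s_{x_0})$, and since $x$ is an isolated point of $\mathrm{Fix}(s_x)$ by hypothesis, $x$ is an isolated point of $\mathrm{Fix}(s_{x_0})$; as a Noetherian scheme has only finitely many zero-dimensional components, the fibre is finite, so $\bar s$ is quasi-finite. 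The decisive difficulty throughout is part (1); parts (2) and (3) use the isolated-fixed-point hypothesis only through the elementary facts that a connected subset of a fixed locus through an isolated fixed point is trivial, and that isolated points of a fixed locus are finite in number.
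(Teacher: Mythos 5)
Your parts (2) and (3) are correct and essentially identical to the paper's arguments: (2) uses that $g\in G^{\varphi_q}$ sends $q$ to a fixed point of $s_q$, so the connected group $(G^{\varphi_q})^\circ$ must fix $q$; (3) uses that a fibre of $\bar s$ consists of points $x$ with $s_x=s_{x_0}$, each of which is an isolated point of $\mathrm{Fix}(s_{x_0})$. The problem is part (1). There you correctly reduce the statement to the dominance of $t_r\colon q\mapsto q\rhd r$, and you correctly observe that the hypothesis forces the diagonal $\Delta$ to be an irreducible component of $\Sigma=\{(a,b)\mid a\rhd b=b\}$ (because isolation of $(q,q)$ in the first-projection fibres rules out a component of dimension $>\dim Q$ containing $\Delta$). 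But then you stop short: you assert that what is still needed is either generic reducedness of $\Sigma$ along $\Delta$ (equivalently, invertibility of $1-T_qs_q$) or finiteness of the fixed loci $\{b\mid q\rhd b=b\}$, and you establish neither. Neither is in fact available from the hypothesis: the fixed locus of $s_q$ may contain positive-dimensional pieces away from $q$, so $\Sigma$ may have components of dimension $>\dim Q$ and your inference that finiteness of the general $p_1$-fibre would give $\dim\Sigma=\dim Q$ does not apply; and nothing excludes a degenerate (non-reduced) isolated fixed point. Your sketch of ``propagating a persistent $1$-eigendirection by the implicit function theorem'' is not an argument.

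The point you are missing is that none of this is needed, because dominance of $t_r$ follows from a purely set-theoretic statement. Once $\Delta$ is known to be a component of $\Sigma$, choose $r$ so that $(r,r)$ lies on no component of $\Sigma$ other than $\Delta$; this holds for general $r$, since the finitely many other components meet $\Delta$ in proper closed subsets. Near such a point $\Sigma$ coincides with $\Delta$ as a set, so $(r,r)$ is isolated in $\Sigma\cap p_2^{-1}(r)$, i.e.\ $r$ is an isolated point of $t_r^{-1}(r)$. Upper semicontinuity of fibre dimension then gives $\dim\overline{t_r(Q)}\geq\dim Q$, so $t_r$ is dominant, and Proposition \ref{prop_connectedness_criteria} / Corollary \ref{cor_open_orbit_is_connected} finish as you describe. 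This switch from the first-projection fibres of $\Sigma$ (which the hypothesis controls) to the second-projection fibres (which control $t_r$), mediated by the fact that $\Delta$ is a component, is exactly the paper's proof of (1) and is the idea absent from your proposal; no tangent-space or reducedness considerations enter at all.
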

\begin{proof}
(1)
Define $f: Q\times Q\to Q\times Q$ by $f(q, r)=(q\rhd r, r)$. 
Let $\Delta\subseteq Q\times Q$ be the diagonal set 
and denote by $p_1, p_2$ the projection maps. 
Then, since $(q, q)=\Delta\cap p_1^{-1}(q)$ is isolated 
in $f^{-1}\Delta\cap p_1^{-1}(q)$, 
$\Delta$ is an irreducible component of $f^{-1}\Delta$. 
It follows that there exists a point $r$ 
such that $(r, r)=\Delta\cap p_2^{-1}(r)$ is isolated 
in $f^{-1}\Delta\cap p_2^{-1}(r)$. 
If we define $t_r: Q\to Q$ by $t_r(q):=q\rhd r$, 
this means that $r$ is isolated in $t_r^{-1}(r)$. 
Then $Q\rhd r$ is dense and the orbit $\Inn(Q)r$ is open. 

(2)
If $s_q g s_q^{-1} = g$ holds 
for an automorphism $g$ of $Q$, 
then $s_q = s_{g(q)}$ holds. 
In particular, $g(q)$ is a fixed point of $s_q$. 
If $g$ is contained in $(\bTrans(O)^{\varphi_q})^\circ$, 
then we must have $g(q)=q$ 
by the assumption that $q$ is an isolated fixed point of $s_q$. 

(3)
Similarly, 
if a fiber of $\bar{s}$ 
were to contain a positive dimensional variety $Z$, 
then $Z$ would be contained in the fixed point locus of $s_z$ 
for any $z\in Z$, a contradiction. 
Thus $\bar{s}$ is quasi-finite. 
\end{proof}

How various regularity conditions are related 
seems to be a subtle question. 
\begin{problem}
What implications hold between the following conditions? 
\begin{itemize}
\item[(T)]
$Q$ is nonsingular and $T_qs_q-1$ is invertible for any $q\in Q$. 
\item[(I)]
$q$ is an isolated fixed point of $s_q$ for any $q$. 
\item[(D)]
$Q\rhd q$ is dense for any $q$. 
\item[(C)]
$Q$ is $\rhd$-connected. 
\item[($\Phi$)]
$Q$ is an algebraic $\varphi$-space. 
\end{itemize}
\end{problem}

We know the following implications. 
\begin{itemize}
\item
(T) $\Rightarrow$ (I), (D), (C) and ($\Phi$). 
\item
(D) $\Rightarrow$ (C). 
\item
(I) and (C) $\Rightarrow$ (D) and ($\Phi$). 
\end{itemize}
As we saw in Remark \ref{rem_unip}, 
the condition (D) does not imply condition (I). 
The same example also shows that (D) and ($\Phi$) do not imply (I). 
In fact, 
let $G:=\SL_2(k)$ act on $Q$ by conjugation 
and define an automorphism $\varphi$ of $G$ by $M\mapsto J_1^{-1}MJ_1$. 
Then $G_{J_1}$ and $G^\varphi$ are both equal to 
$\{M\in\SL_2(k)| MJ_1=J_1M\}$, 
and $Q$ is isomorphic to $(G/G^\varphi, \rhd_\varphi)$. 

\medskip
As a related question, 
what happens if $T_qs_q=1$ for any $q\in Q$? 
Does there exist a $\rhd$-connected quandle variety $Q$ 
with $T_qs_q=1$ for any $q\in Q$? 
The following example shows 
that the orbits can be of codimension $1$.

\begin{example}
In this example, 
$Q$ is a weak algebraic $\varphi$-space, 
$T_qs_q=1$ for any $q\in Q$ 
and $\dim (\Inn(Q)q)=n-1$ 
where $n=\dim Q$. 
More precisely, 
the dimension of $Q^k q$ is $k$ for $k\leq n-2$ 
and $n-1$ for $k\geq n-1$. 

In Example \ref{ex_ext_of_trivial}, 
let $X=\bA^1$, $A=\bA^{n-1}$ and $F(x, y)=((y-x)^2, (y-x)^3, \dots, (y-x)^n)$. 
Then $Q=\bA^n$ with 
\[
(x_1, \dots, x_n)\rhd (y_1, \dots, y_n)=
(y_1, y_2+(y_1-x_1)^2, y_3+(y_1-x_1)^3, \dots, y_n+(y_1-x_1)^n). 
\]
Then 
$T_qs_q=1$ everywhere, 
but the orbit of $(0, 0)$ is $\{0\}\times \bA^{n-1}$. 

To see that $Q$ is a weak $\varphi$-space, 
note that the map $f_{\alpha_1, \alpha_2(x_1), \dots, \alpha_n(x_1)}: Q\to Q$ 
defined by 
\[
(x_1, \dots, x_n)\mapsto
(x_1+\alpha_1, 
x_2+\alpha_2(x_1), 
\dots, 
x_n+\alpha_n(x_1))
\]
is an automorphism of $Q$ 
for any $\alpha_1\in k$ and $\alpha_i(x_1)\in k[x_1]$, 
and that 
\[
G=\{f_{\alpha_1, \alpha_2(x_1), \dots, \alpha_n(x_1)} 
| \deg \alpha_i(x_1)\leq i-1 (i=2, \dots, n)\} 
\]
is a subgroup of $\Aut(Q)$ acting transitively on $Q$. 
We also see that the conjugation by $s_{(0, 0)}$ 
leaves $G$ invariant, 
and therefore defines an automorphism $\varphi$ of $G$. 
Then one can show that $Q$ is isomorphic to $(G/G_{(0, 0)}, \rhd_\varphi)$. 
\end{example}

\end{document}